\documentclass[10pt]{amsart}
\usepackage{bbm}
\usepackage{cases}
\usepackage{txfonts}
\usepackage{amsfonts}
\usepackage{mathrsfs}
\usepackage{amssymb}
\usepackage{amsmath}
\usepackage{epic}

\setlength{\parindent}{1.5em} \addtolength{\parskip}{7pt}

\newtheorem{proposition}{Proposition}[section]
\newtheorem{lemma}[proposition]{Lemma}
\newtheorem{corollary}[proposition]{Corollary}
\newtheorem{theorem}[proposition]{Theorem}

\theoremstyle{definition}

\theoremstyle{remark}
\newtheorem{remark}[proposition]{Remark}

\newcommand{\thlabel}[1]{\label{th:#1}}
\newcommand{\thref}[1]{Theorem~\ref{th:#1}}
\newcommand{\selabel}[1]{\label{se:#1}}
\newcommand{\seref}[1]{Section~\ref{se:#1}}
\newcommand{\lelabel}[1]{\label{le:#1}}
\newcommand{\leref}[1]{Lemma~\ref{le:#1}}

\newcommand{\colabel}[1]{\label{co:#1}}
\newcommand{\coref}[1]{Corollary~\ref{co:#1}}
\newcommand{\relabel}[1]{\label{re:#1}}

\def\a{\alpha}
\def\b{\beta}

\def\D{\Delta}
\def\e{\epsilon}

\def\ep{\varepsilon}

\def\g{\gamma}

\def\l{\lambda}

\def\op{\oplus}
\def\ot{\otimes}

\def\oo{\infty}
\def\p{\prime}

\def\ra{\rightarrow}
\def\s{\sigma}

\def\ti{\times}

\def\<{\leqslant}
\def\>{\geqslant}

\date{}
\begin{document}
\title[Decomposition rules for modules over the Hopf-Ore extensions of group algebras]
{Tensor product decomposition rules for weight modules over the Hopf-Ore extensions of group algebras}

\author{Hua Sun}
\address{School of Mathematical Science, Yangzhou University,
Yangzhou 225002, China}
\email{997749901@qq.com}

\author{Hui-Xiang Chen}
\address{School of Mathematical Science, Yangzhou University,
Yangzhou 225002, China}
\email{hxchen@yzu.edu.cn}

\subjclass[2010]{16G30, 16T99}
\keywords{Hopf-Ore extension, weight module, tensor product module, decomposition rule}

\begin{abstract}
In this paper, we investigate the tensor structure of the category of finite dimensional weight modules
over the Hopf-Ore extensions $kG(\chi^{-1}, a, 0)$ of group algebras $kG$.
The tensor product decomposition rules for all indecomposable weight modules
are explicitly given under the assumptions that $k$ is an algebraically closed
field of characteristic zero, and the orders of $\chi$ and $\chi(a)$ are the same.
\end{abstract}
\maketitle

\section{Introduction and Preliminaries}\selabel{1}
The tensor product of representations of a Hopf algebra $H$ is an important ingredient
in the representation theory of Hopf algebras and quantum groups. In particular,
the decomposition of the tensor product of indecomposable modules into a direct sum of
indecomposables has received enormous attention.
However, in general,
very little is known about how a tensor product of
two indecomposable modules decomposes into a direct sum of indecomposable modules.
For modules over a group algebra this information is encoded in the
structure of the Green ring (or the representation ring) for finite groups,
 \cite{Archer, BenCar, BenPar, BrJoh, Green, HTW}).
For modules over a Hopf algebra or a quantum group there are results
by Cibils on a quiver quantum group \cite{Cib},
by Witherspoon on  the quantum double of a finite group
\cite{With}, by Gunnlaugsd$\acute{\rm o}$ttir on the half quantum groups
(or Taft algebras) \cite{Gunn}.
Recently, Kondo and Saito gave the decomposition of tensor products of modules
over the restricted quantum universal enveloping algebra associated to $\mathfrak{sl}_2$
in \cite{KonSa2011}. Chen, Van Oystaeyen and Zhang \cite{ChVOZh} computed the Green rings of Taft algebras
$H_n(q)$, using the decomposition of tensor products of modules given by Cibils \cite{Cib}.
Li and Zhang \cite{LiZhang} computed the Green rings of the generalized Taft algebras.
Chen \cite{Chen2014} gave the decomposition of tensor products of modules
over $D(H_4)$ and described the Green ring of $D(H_4)$.
Li and Hu \cite{LiHu} studied the Green rings of $D(H_4)$ and its two relatives twisted.

During the last few years several classification results for pointed Hopf algebras were
obtained based on the theory of Nichols algebras \cite{AndSch98,
AndSch02, AndSch10}. Beattie, D$\check{a}$sc$\check{a}$lescu and Gr$\ddot{u}$nenfelder
constructed a large class of pointed Hopf algebras by Ore extension in \cite{BeaDasGrun} .
Panov studied Ore extensions in the class of Hopf algebras,
which enable one to describe the Hopf-Ore extensions for group algebras in \cite{Pa}.
Krop and Radford defined the rank as a measure
of complexity for Hopf algebras \cite{KR}. They classified
all finite dimensional pointed Hopf algebras of rank one over an algebraically
field $k$ of characteristic $0$. Scherotzke classified such Hopf algebras
for the case of char$(k)=p>0$ in \cite{Sc}. It was shown in \cite{KR, Sc}
that a finite dimensional pointed Hopf algebra of rank one over an algebraically closed field
is isomorphic to a quotient of a Hopf-Ore extension of its coradical.
Wang, Li and Zhang \cite{WangLiZhang2014, WangLiZhang2016} studied the representations of finite dimensional
pointed Hopf algebras of rank one over an algebraically closed field of characteristic zero.
They gave the decomposition rules of the tensor product modules over such Hopf algebras
and described their Green rings.
Wang, You and Chen \cite{WangYouChen} studied also the Hopf-Ore extensions of group algebras
and pointed Hopf algebras of rank one. They showed that any pointed Hopf algebra
of rank one over an arbitrary field is isomorphic to a quotient of a Hopf-Ore extension of its coradical, without any restriction
on the base field and the dimensions of Hopf algebras.
They also studied the representations of the Hopf-Ore extensions of
group algebras and pointed Hopf algebras of rank one, and classified the finite dimensional
indecomposable weight modules over such Hopf algebras.
Moreover, the categories of finite dimensional weight modules over such Hopf algebras
are monoidal categories. These studies give rise to a natural question: How to determine the
decomposition of the tensor product of finite dimensional indecomposable weight
modules over the Hopf-Ore extension of a group algebra?

In this paper, we study the decomposition of the tensor product of finite dimensional indecomposable weight
modules over the Hopf-Ore extension $H=kG(\chi^{-1}, a, 0)$ of a group algebra $kG$,
where $k$ is an algebraically closed field of characteristic zero.
The paper is organized as follows. In this section, we recall the Hopf algebra structure of
$H=kG(\chi^{-1}, a, 0)$ and some notations.
In \seref{2}, we recall the classification of finite dimensional indecomposable
weight modules over $H$ from the reference \cite{WangYouChen}.
In \seref{3}, we investigate the tensor product of two finite dimensional indecomposable weight modules
over $H$ for the two cases of $|\chi|=|\chi(a)|=\infty$ and $|\chi|=|\chi(a)|<\infty$, respectively.
The decomposition rules for all such tensor products are explicitly given. More explicitly,
given any two finite dimensional weight $H$-modules $V$ and $W$, we find
indecomposable weight modules $M_s$ such that $V\ot W\cong\oplus_sM_s$.

Throughout, we work over a field $k$. Unless otherwise stated, all algebras, coalgebras,
Hopf algebras and modules are vector spaces over $k$. All linear maps mean $k$-linear maps,
dim and $\otimes$ mean dim$_k$ and $\otimes_k$, respectively.
Our references for basic concepts and notations about representation theory and Hopf algebras
are \cite{ARS, Kas, Mo}.
In particular, for a Hopf algebra, we will use $\ep$, $\D$ and $S$ to denote the counit,
comultiplication and antipode, respectively. Let $k^{\ti}=k\setminus\{0\}$.
For a group $G$, let $\hat{G}$ denote the set of characters of $G$ over $k$,
and let $Z(G)$ denote the center of $G$. For a Hopf algebra $H$, let $H_0$ denote the coradical of $H$.
An $H$-module means a left $H$-module.
Let $\mathbb Z$ denote all integers.

Let $G$ be a group and $a\in Z(G)$. Let $\chi\in\hat{G}$ with $\chi(a)\neq1$.
The Hopf-Ore extension $H=kG(\chi^{-1}, a, 0)$ of the group
algebra $kG$ can be described as follows. $H$ is generated, as an algebra, by $G$ and $x$ subject to
the relations $xg=\chi^{-1}(g)gx$ for all $g\in G$. The coalgebra structure and antipode are given
by
$$\begin{array}{lll}
\D(x)=x\ot a+1\ot x,& \ep(x)=0,& S(x)=-xa^{-1},\\
\D(g)=g\ot g,& \ep(g)=1,& S(g)=g^{-1},\\
\end{array}$$
where $g\in G$.
$H$ has a $k$-basis $\{gx^i \mid g\in G,i\>0\}$.

Let $M$ be an $H$-module. For any $\l \in \hat{G}$,
let $M_{(\l)}=\{v\in M\mid g\cdot v=\l (g)v,\, g\in G\}$.
Each nonzero element in $M_{(\l)}$ is called a {\it weight vector of weight $\l$} in $M$.
One can check that $\op _{\l \in \hat{G}} M_{(\l)}$
is a submodule of $M$.
Let $\Pi(M)=\{\l \in \hat{G}\mid M_{(\l)}\neq 0\}$, which is
called the {\it weight space} of $M$.
$M$ is said to be a {\it weight module} if $M=\op _{\l\in \Pi(M)}
M_{(\l)}$. Let $\mathcal M$ denote the category of left $H$-modules.
Then $\mathcal M$ is a monoidal category.
Let $\mathcal W$ be the full subcategory of $\mathcal M$ consisting of all weight modules.
Then $\mathcal W$ is a monoidal subcategory of $\mathcal M$.

For any positive integer $n$ and a scale $\a\in k$, let $J_n(\a)$ be the $n\times n$-matrix
$$\left(\begin{array}{cccc}
\a&&&\\
1&\a&&\\
&\ddots&\ddots&\\
&&1&\a\\
\end{array}
\right)$$
over $k$, the Jordan block matrix.

\begin{remark}\relabel{1.1}
From \cite{WangYouChen}, one knows that any pointed Hopf algebra of rank one is isomorphic to
$H/I$ for some Hopf ideal $I$ of $H$. In this case, there exists a simple weight $H/I$-module
with dimension $>1$ only if $|\chi|=s<\infty$ and $\chi^{-1}(a)$ is a primitive $s^{th}$ root of unity.
\end{remark}

Let $0\not=q\in k$. For any nonnegative integer $n$, define $(n)_q$ by $(0)_q=0$ and $(n)_q=1+q+\cdots +q^{n-1}$ for $n>0$.
Observe that $(n)_q=n$ when $q=1$, and
$$
(n)_q=\frac{q^n-1}{q-1}
$$
when $q\not= 1$.
Define the $q$-factorial of $n$ by
$(0)!_q=1$ and
$(n)!_q=(n)_q(n-1)_q\cdots (1)_q$ for $n>0$.
Note that $(n)!_q=n!$ when $q=1$, and
$$
(n)!_q=
\frac{(q^n-1)(q^{n-1}-1)\cdots (q-1)}{(q-1)^n}
$$
when $n>0$ and $q\not= 1$.
The  $q$-binomial coefficients
$\binom{n}{i}_q$
are defined inductively as follows for $0\leqslant i\leqslant n$:
$$
\binom{n}{0}_q=1=\binom{n}{n}_q
\quad\quad
\mbox{ for } n\geqslant 0,$$
$$
\binom{n}{i}_q=q^i\binom{n-1}{i}_q+\binom{n-1}{i-1}_q
\quad \quad
\mbox{ for } 0< i< n.$$
It is well-known that $\binom{n}{i}_q$
is a polynomial in $q$ with integer coefficients and with value at $q=1$
equal to the usual binomial coefficient $\binom{n}{i}$, and that
$$
\binom{n}{i}_q
=\frac{(n)!_q}{(i)!_q(n-i)!_q}
$$
when
$(n-1)!_q\not = 0$ and $0<i<n$
(see \cite[Page 74]{Kas}).

\section{Indecomposable weight modules}\selabel{2}

In this section, we recall the finite dimensional indecomposable weight $H$-modules,
which were classified in \cite{WangYouChen}. We still assume that $\chi^{-1}(a)\neq 1$,
and use the notations of last section. Throughout the following, let
$H=kG(\chi^{-1}, a, 0)$ be the Hopf algebra given as in the last section.
For an $H$-module $M$ and $m\in M$, let $\langle m\rangle$ denote the submodule of $M$ generated by $m$.

For any $\l \in \hat{G}$, let $V_{\l}$ be the $1$-dimensional $H$-module defined by
$$g\cdot v=\l(g)v \mbox{ and } x\cdot v=0,\, v\in V_{\l}.$$
The following lemma is obvious.

\begin{lemma}\lelabel{2.1}
Let $\sigma ,\l \in \hat{G}$. Then\\
{\rm (a)} $V_{\l}$ is a simple $H$-module.\\
{\rm (b)} $V_{\sigma}\cong V_{\l}$ if and only if $\sigma=\l$.\\
\end{lemma}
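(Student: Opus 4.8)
The plan is to prove both parts directly from the definition of $V_\l$ and the module structure, since the statement is elementary.

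For part (a), I would verify simplicity by showing $V_\l$ has no proper nonzero submodule. Since $\dim V_\l = 1$, any submodule is either $0$ or all of $V_\l$, so there is nothing to check beyond confirming that $V_\l$ is genuinely a module, i.e. that the defining action $g\cdot v=\l(g)v$ and $x\cdot v=0$ respects the relations $xg=\chi^{-1}(g)gx$ of $H$. I would check this on the basis element $v$: acting by $xg$ gives $x\cdot(\l(g)v)=\l(g)(x\cdot v)=0$, while acting by $\chi^{-1}(g)gx$ gives $\chi^{-1}(g)g\cdot(x\cdot v)=0$, so both sides agree and the relations hold. Hence $V_\l$ is a well-defined one-dimensional, and therefore simple, $H$-module.

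For part (b), the direction $\s=\l\Rightarrow V_\s\cong V_\l$ is immediate. For the converse, suppose $f\colon V_\s\to V_\l$ is an $H$-module isomorphism, and pick $0\neq v\in V_\s$ so that $f(v)\neq 0$ spans $V_\l$. For each $g\in G$, applying $f$ to $g\cdot v=\s(g)v$ and using $H$-linearity gives $g\cdot f(v)=\s(g)f(v)$; on the other hand, by the definition of the action on $V_\l$ we have $g\cdot f(v)=\l(g)f(v)$. Since $f(v)\neq 0$, comparing coefficients yields $\s(g)=\l(g)$ for all $g\in G$, that is, $\s=\l$.

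I do not anticipate any real obstacle here, as the lemma is labelled obvious in the text; the only points requiring care are the explicit verification that the prescribed action is compatible with the commutation relation $xg=\chi^{-1}(g)gx$ (part (a)) and the bookkeeping with the nonzero image $f(v)$ when reading off characters (part (b)). Both reduce to one-line computations once the module axioms are written out.
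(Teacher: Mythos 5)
Your proof is correct and is exactly the routine verification the paper has in mind when it declares the lemma ``obvious'' and omits a proof: well-definedness of the action plus one-dimensionality gives simplicity, and $H$-linearity of an isomorphism forces the characters to agree. No issues.
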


Let $\l\in \hat{G}$. Define an $H$-module $M(\l)=H\ot _{H_0}V_{\l}$, called the {\it Verma module},
where $H_0=kG$, the coradical of $H$.
Note that $H$ is a free right $H_0$-module and
$H=\oplus_{i=0}^{\oo}(x^iH_0)$. Hence one gets that
$x^iH_0\cong H_0$ as right $H_0$-modules, and
$$M(\l)
=\oplus_{i=0}^{\oo}(x^iH_0)\ot_{H_0}V_{\l}=\oplus_{i=0}^{\oo}x^i\ot_{H_0}V_{\l}$$
as $k$-spaces.
Fix a nonzero element $v\in V_{\l}$, and let $v_{\l}:=1\ot_{H_0}v$. Then
$x^i\ot_{H_0}v=x^i\cdot(1\ot _{H_0}v)=x^i\cdot v_{\l}$.
Hence $M(\l)$ has a $k$-basis $\{x^i\cdot v_{\l}\mid i\>0\}$,
and so $M(\l)$ is a free $k[x]$-module of rank one. Moreover, $M(\l)=k[x]\cdot v_{\l}$,
where $k[x]$ is the subalgebra of $H$ generated by $x$, which is a polynomial algebra
in one variable $x$ over $k$.

For any $t\>1$ and $\l\in\hat{G}$, let $M_t(\l)$ be the submodule of $M(\l)$ generated by $x^t\cdot v_{\l}$,
and let $V_t(\l):=M(\l)/M_t(\l)$ be the corresponding quotient module.
Then $M_t(\l)={\rm span}\{x^i\cdot v_{\l}\mid i\>t\}$. Let $m_i$ be the image of $x^i\cdot v_{\l}$
under the canonical epimorphism $M(\l)\ra V_t(\l)$ for all $i\>0$. Then $m_i=0$ for all $i\>t$, and
$V_t(\l)$ is a $t$-dimensional $k$-space with a basis $\{m_0, m_1, \cdots, m_{t-1}\}$.
One can easily check that the $H$-action on $V_t(\l)$ is determined by
$$ g\cdot m_i=\chi^i(g)\l(g)m_i,\  x\cdot m_i=m_{i+1}, 0\<i\<t-2, \ x\cdot m_{t-1}=0.$$
Obviously, $V_1(\l)\cong V_{\l}$ for any $\l\in\hat{G}$.

Let $\langle\chi\rangle$ denote the subgroup of
$\hat{G}$ generated by $\chi$, and $[\l]$ denote the image of $\l$ under the canonical
epimorphism $\hat{G}\ra\hat{G}/\langle\chi\rangle$.

Let $k[y]$ be the polynomial algebra in one variable $y$ over $k$,
and ${\rm Irr}[y]$ the set of all monic irreducible polynomial with nonzero constant term.
When $|\chi|=s<\infty$, for any $\l \in \hat{G}$ and monic
polynomial $f(y)\in k[y]$
with $n:={\rm deg}(f(y))\>1$,
let $V(\l, f)$ be a vector space of dimension $ns$ with a basis $\{m_0, m_1, \cdots, m_{ns-1}\}$ over $k$.
Assume that $f(y)=y^n-\sum_{j=0}^{n-1}\a_jy^j$, where $\a_0, \a_1, \cdots, \a_{n-1}\in k$.
Then one can easily check that $V(\l, f)$ is an $H$-module with the action determined by
$$ g\cdot m_i=\chi^i(g)\l(g)m_i,\ \ x\cdot m_i=\left\{\begin{array}{ll}
m_{i+1},&0\<i\<ns-2\\
\sum_{j=0}^{n-1}\a_j m_{js},&i=ns-1\\
\end{array}\right.,$$
where $0\<i\<ns-1$, $g\in G$. Obviously, $V(\l, f)$ is a weight module and
$\Pi(V(\l, f))=\{\chi^i\l|0\<i\<s-1\}$. Moreover, $m_i=x^i\cdot m_0$ for all $0\<i\<ns-1$,
and $\{m_{js+i}|0\<j\<n-1\}$ is a $k$-basis of $V(\l, f)_{(\chi^i\l)}$ for any $0\<i\<s-1$.

If $f(y)=y-\b$ for some $\b\in k$, then $n=1$, $x\cdot m_{s-1}=\b m_0$, and hence
$x^s\cdot m=\b m$ for all $m\in V(\l, f)$. Denote $V(\l, f)$ by $V(\l, \b)$
in this case.

\begin{corollary}\colabel{2.2}
Assume that $|\chi|=\infty$. Then $\{V_t(\l)|t\>1, \l\in\hat{G}\}$
is a complete set of finite dimensional indecomposable weight $H$-modules
up to isomorphism.
\end{corollary}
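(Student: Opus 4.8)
The plan is to combine the weight-shifting behaviour of $x$ with the infinite order of $\chi$ and so reduce the classification to a well-understood linear-quiver problem. First I would record the basic commutation fact: rewriting the defining relation $xg=\chi^{-1}(g)gx$ as $gx=\chi(g)xg$, one checks that if $v\in M_{(\lambda)}$ then $g\cdot(x\cdot v)=\chi(g)\lambda(g)(x\cdot v)$, so that $x\cdot M_{(\lambda)}\subseteq M_{(\chi\lambda)}$. Thus $x$ shifts weights by $\chi$. Since $M$ is finite dimensional, $\Pi(M)$ is finite; because $|\chi|=\infty$ the weights $\lambda,\chi\lambda,\chi^2\lambda,\dots$ are pairwise distinct, so iterating $x$ must eventually leave $\Pi(M)$ and hence $x$ acts nilpotently on $M$.

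Next I would cut $M$ along the cosets of $\langle\chi\rangle$ in $\hat{G}$. For each coset $c\in\hat{G}/\langle\chi\rangle$ set $M_c=\oplus_{\lambda\in c}M_{(\lambda)}$. This subspace is stable under $G$ (it is a sum of weight spaces) and under $x$ (which keeps a weight inside its own coset), hence, as $H$ is generated by $G$ and $x$, it is an $H$-submodule; clearly $M=\oplus_c M_c$. If $M$ is indecomposable only one coset occurs, so $\Pi(M)\subseteq\{\chi^i\lambda_0\}$ for a single $\lambda_0$, and by the infinite order of $\chi$ these exponents are unambiguous. Let $\lambda$ be the unique lowest weight, i.e.\ $M_{(\lambda)}\neq0$ but $M_{(\chi^{-1}\lambda)}=0$, and index the weight spaces as $M_i:=M_{(\chi^i\lambda)}$ for $i\ge0$.

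With this labelling $M$ becomes exactly a finite dimensional representation of the linearly oriented type-$A$ quiver with vertex set $\{0,1,2,\dots\}$ and arrows $i\to i+1$ given by the restrictions $x\colon M_i\to M_{i+1}$ (there are no relations, since $x$ is unconstrained as a linear map). The key step is then the standard classification of such representations: every finite dimensional indecomposable is an interval module, one-dimensional at each vertex of some interval $\{0,\dots,t-1\}$ (after the shift fixing the bottom at $\lambda$) with identity connecting maps. Equivalently, viewing $M$ as a finite dimensional $\mathbb{Z}$-graded $k[x]$-module with $x$ nilpotent, a graded Nakayama argument decomposes it into shifted cyclic modules $k[x]/(x^t)$, and the indecomposable summands are precisely these. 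Reading off the $G$- and $x$-actions on such a summand reproduces exactly the module $V_t(\lambda)$. This gives completeness: every finite dimensional indecomposable weight module is isomorphic to some $V_t(\lambda)$.

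Finally I would close the classification from the other side. Each $V_t(\lambda)$ is a single interval module, so any $H$-endomorphism must preserve the one-dimensional weight spaces and commute with $x$; it is therefore determined by its scalar action on $m_0$ and equals that scalar throughout, giving $\End_H(V_t(\lambda))\cong k$, a local ring, so $V_t(\lambda)$ is indecomposable. For non-redundancy, an isomorphism $V_t(\lambda)\cong V_{t'}(\lambda')$ forces $t=t'$ by dimension and $\Pi(V_t(\lambda))=\Pi(V_{t'}(\lambda'))$; since $|\chi|=\infty$, the weight set $\{\lambda,\chi\lambda,\dots,\chi^{t-1}\lambda\}$ has a unique element whose $\chi^{-1}$-translate is absent, namely $\lambda$, whence $\lambda=\lambda'$. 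The main obstacle is the structural step showing that a module supported on a single $\chi$-coset is an interval module; this is exactly where the hypothesis $|\chi|=\infty$ is essential, as it makes the relevant quiver the representation-finite linear type $A$ rather than the cyclic type $\tilde A$ that arises when $|\chi|<\infty$ and which instead produces the modules $V(\lambda,f)$.
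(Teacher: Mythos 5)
The paper itself gives no proof of this corollary: it is quoted from \cite{WangYouChen} as part of the background in \seref{2}, so there is no in-paper argument to compare yours against. On its own terms your proof is correct and self-contained. The chain of reductions is sound: the relation $gx=\chi(g)xg$ does give $x\cdot M_{(\lambda)}\subseteq M_{(\chi\lambda)}$; finiteness of $\Pi(M)$ together with $|\chi|=\infty$ forces $x$ to act nilpotently; the coset decomposition $M=\oplus_c M_c$ is a decomposition into submodules, so an indecomposable lives on one coset; and since $H$ has basis $\{gx^i\}$ the category of weight modules supported on a fixed coset really is the category of representations of the linearly oriented $A_\infty^\infty$ quiver with no relations, whose finite dimensional indecomposables are the interval modules, i.e.\ exactly the $V_t(\lambda)$. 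The computation $\mathrm{End}_H(V_t(\lambda))\cong k$ and the non-redundancy argument are also fine. The only wrinkle worth flagging is the phrase ``the unique lowest weight'': a priori the support of $M$ inside $\{\chi^i\lambda_0\}_{i\in\mathbb Z}$ could have gaps, in which case several weights $\lambda$ satisfy $M_{(\lambda)}\neq0$, $M_{(\chi^{-1}\lambda)}=0$; you should either take $\lambda$ to be the weight of minimal exponent (which is all your indexing actually needs, gaps just giving some $M_i=0$), or observe first that a gap at position $j$ splits $M$ as $(\oplus_{i<j}M_i)\oplus(\oplus_{i\geq j}M_i)$, contradicting indecomposability. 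Your closing remark correctly identifies where $|\chi|=\infty$ enters, namely in replacing the cyclic quiver (which produces the $V(\lambda,f)$ of \thref{2.3}) by the representation-finite linear one.
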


\begin{theorem}\thlabel{2.3}
Assume that $|\chi|=s<\oo$. Then
$$\left\{V_t(\l), V(\s, f^t)\left| \l\in\hat{G}, [\s]\in\hat{G}/\langle\chi\rangle,
t\>1, f(y)\in{\rm Irr}[y]\right.\right\}$$
is a complete set of finite dimensional indecomposable weight $H$-modules up to isomorphism.
\end{theorem}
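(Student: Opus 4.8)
The plan is to reduce first to a single coset of $\langle\chi\rangle$ in $\hat G$, then analyse the central element $x^s$, and finally match the two resulting cases with the two families in the statement. The starting observation is the weight-shift rule: from $xg=\chi^{-1}(g)gx$ one computes $g\cd(x\cd v)=\chi(g)\l(g)(x\cd v)$ for $v\in M_{(\l)}$, so $x\cd M_{(\l)}\ss M_{(\chi\l)}$. Hence for any weight module $M$ and any coset $[\s]\in\hat G/\langle\chi\rangle$ the subspace $M^{[\s]}:=\op_{i}M_{(\chi^i\s)}$ is stable under both $G$ and $x$, i.e. an $H$-submodule, and $M=\op_{[\s]}M^{[\s]}$. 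Thus an indecomposable $M$ is supported on a single coset, and since $|\chi|=s$ there are at most $s$ distinct weights $\s,\chi\s,\dots,\chi^{s-1}\s$ present. Because $\chi^s=1$, the element $x^s$ satisfies $x^sg=\chi^{-s}(g)gx^s=gx^s$ and commutes with $x$, so $x^s$ is central in $H$; therefore for each monic irreducible $f\in k[y]$ the subspace $\ker(f(x^s)^N)$ ($N\gg0$) is an $H$-submodule, and the primary decomposition of $x^s$ refines any module decomposition. Consequently, on an indecomposable $M$ there is a single monic irreducible $f$ with $f(x^s)$ nilpotent: if $f(y)=y$ then $x^s$, hence $x$, is nilpotent, while otherwise $f(0)\neq0$ and $x^s$ is invertible. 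These two cases are treated separately.

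In the case $f(y)=y$ the operator $x$ is nilpotent on $M$; fixing $N$ with $x^N=0$, I would regard $M$ as a module over the cyclic Nakayama algebra $\Lambda$ obtained from the path algebra of the cyclic quiver on the $s$ weights $\chi^i\s$ by killing all paths of length $N$ (with $g\in G$ acting on the $i$-th vertex space by the scalar $\chi^i\s(g)$). Indecomposable $\Lambda$-modules are uniserial, classified by their top and their length, so $M$ is generated by a single weight vector $v$ of some weight $\l$ with $M={\rm span}\{v,xv,\dots,x^{t-1}v\}$ and $x^tv=0$, which is precisely $V_t(\l)$. In the case $f(0)\neq0$ the map $x^s$ is invertible on $M$; writing $f_i=x|_{M_{(\chi^i\s)}}$, invertibility of each monodromy $x^s|_{M_{(\chi^i\s)}}$ (the $s$-fold cyclic composite of the $f_j$) forces every $f_i$ to be bijective, so all weight spaces have a common dimension $n$ and $\dim M=ns$. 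Since all $f_i$ are isomorphisms, a change of basis makes $f_0=\cds=f_{s-2}={\rm id}$ and $f_{s-1}=T:=x^s|_{M_{(\s)}}$, so $M$ is determined up to isomorphism by the conjugacy class of the invertible operator $T$; $H$-submodules correspond to $T$-invariant subspaces, whence $M$ is indecomposable iff $M_{(\s)}$ is an indecomposable $k[y]$-module under $y=T$, i.e. $M_{(\s)}\cong k[y]/(f^t)$ with $f\in{\rm Irr}[y]$, and reconstructing the action of $x$ recovers $M\cong V(\s,f^t)$.

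It then remains to check that the listed modules are indecomposable and pairwise non-isomorphic. For indecomposability I would compute endomorphism rings: an $H$-endomorphism commutes with $G$, hence preserves each weight space, and commutes with $x$, so it is multiplication by a polynomial in $x^s$. This yields $\End_H(V(\s,f^t))\cong k[y]/(f(y)^t)$ and $\End_H(V_t(\l))\cong k[y]/(y^{\lceil t/s\rceil})$, both local, whence both modules are indecomposable. For pairwise non-isomorphism, $x$ is nilpotent on every $V_t(\l)$ while $x^s$ is invertible on every $V(\s,f^t)$ (as $f(0)\neq0$), so the two families are disjoint; within the first family $t=\dim V_t(\l)$ and $\l$ is read off as the weight of the top $V_t(\l)/xV_t(\l)$, giving $V_t(\l)\cong V_{t'}(\l')\iff t=t',\,\l=\l'$; within the second family the weight set recovers the coset $[\s]$ and the similarity class of $x^s$ on $M_{(\s)}$ recovers $f^t$. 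Here one notes the asymmetry in the indexing: for the nilpotent family the top weight is a genuine invariant, so $\l$ ranges over all of $\hat G$, whereas a module with invertible monodromy is homogeneous around the cycle and its isomorphism class depends only on the coset, which is why the second family is indexed by $\hat G/\langle\chi\rangle$.

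The main obstacle is the case $f(0)\neq0$: one must prove that an indecomposable with invertible $x^s$ necessarily wraps all the way around the cycle (all $f_i$ bijective, all weight spaces equidimensional), reduce the classification to a single conjugacy class of an invertible operator, and verify that the reconstruction reproduces $V(\s,f^t)$ exactly -- together with the subtle redundancy statement $V(\chi\s,f^t)\cong V(\s,f^t)$, which shows that precisely one representative per coset must be retained. By contrast, the weight-shift reduction and the nilpotent case become comparatively routine once the cyclic-quiver/Nakayama structure has been identified.
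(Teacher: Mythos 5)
Your proposal is correct, but there is nothing in the paper to compare it against: Section~2 states \thref{2.3} without proof, explicitly recalling the classification from \cite{WangYouChen}. Taken on its own terms, your argument is a sound, self-contained derivation, and the invariants you isolate are exactly the ones underlying the paper's parametrization. The coset splitting $M=\oplus_{[\sigma]}M^{[\sigma]}$ and the primary decomposition with respect to the central element $x^s$ correctly separate the nilpotent family from the non-nilpotent one; in the nilpotent case the passage to a truncated cyclic Nakayama algebra and the uniseriality of its indecomposables gives $V_t(\lambda)$, and in the invertible case your key observation --- that bijectivity of the monodromy $x^s|_{M_{(\chi^i\sigma)}}$ forces each arrow $x\colon M_{(\chi^i\sigma)}\to M_{(\chi^{i+1}\sigma)}$ to be bijective, so that $H$-submodules (and direct sum decompositions) biject with $k[y]$-submodules of $(M_{(\sigma)},x^s)$ --- correctly reduces the classification to a single conjugacy class of an invertible operator and yields $V(\sigma,f^t)$. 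The endomorphism-ring computations ($k[y]/(f^t)$ and $k[y]/(y^{\lceil t/s\rceil})$, both local) and the redundancy $V(\chi\sigma,f^t)\cong V(\sigma,f^t)$ (via conjugating $x^s|_{M_{(\sigma)}}$ by the bijection $x$) settle indecomposability and irredundancy, which is consistent with how the paper later uses this material (\leref{3.4} rebuilds submodules from the Jordan data of $x^s$ on a weight space, and the proof of \thref{3.7} invokes $sV_t(\sigma\lambda,\beta)\cong\oplus_iV_t(\chi^i\sigma\lambda,\beta)$ from \cite[Lemma 3.7]{WangYouChen}). Two minor points you may wish to make explicit in a write-up: the reconstruction step in the invertible case should exhibit the basis $\{x^{js+i}v\}$ for a cyclic vector $v$ of $x^s|_{M_{(\sigma)}}$ and check it satisfies the defining relations of $V(\sigma,f^t)$; and it is worth noting that your argument uses neither algebraic closure nor characteristic zero, matching the fact that \thref{2.3} (unlike \coref{2.4} and Section~3) is stated over an arbitrary field.
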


When $f(y)=(y-\b)^n$ with $n\>1$ and $\b\in k$, we denote $V(\l, f)$ by $V_n(\l, \b)$,
where $\l\in\hat{G}$. Obviously, if $\b=0$ then $V_n(\l, 0)=V_{ns}(\l)$.
If $k$ is algebraically closed, then each monic irreducible polynomial in $k[y]$
has the form $y-\b$, $\b\in k$. Thus, from \thref{2.3}, we have the following corollary.

\begin{corollary}\colabel{2.4}
Assume that $|\chi|=s<\oo$ and $k$ is algebraically closed. Then
$$\left\{V_t(\l), V_t(\s, \b)\left| \l\in\hat{G}, [\s]\in\hat{G}/\langle\chi\rangle, t\>1, \b\in k^{\times}\right.\right\}$$
is a complete set of finite dimensional indecomposable weight $H$-modules up to isomorphism.
\end{corollary}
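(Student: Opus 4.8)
The plan is to obtain the statement as an immediate specialization of \thref{2.3}: the only thing that changes when $k$ is algebraically closed is the indexing set ${\rm Irr}[y]$ of monic irreducible polynomials, and I would simply rewrite the second family of modules accordingly. First I would invoke \thref{2.3}, which already asserts that $\{V_t(\l),\, V(\s, f^t)\}$, with $\l\in\hat{G}$, $[\s]\in\hat{G}/\langle\chi\rangle$, $t\>1$ and $f(y)\in{\rm Irr}[y]$, is a complete set of finite dimensional indecomposable weight $H$-modules up to isomorphism. The first family $\{V_t(\l)\}$ is left untouched, so all of the work is in re-expressing $\{V(\s, f^t)\}$.

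The key observation is that over an algebraically closed field every monic irreducible polynomial in $k[y]$ is linear, i.e. of the form $y-\b$ with $\b\in k$. Since ${\rm Irr}[y]$ is by definition the set of such polynomials with nonzero constant term, the constant term $-\b$ must be nonzero, forcing $\b\in k^{\times}$; thus $\b\mapsto y-\b$ is a bijection from $k^{\times}$ onto ${\rm Irr}[y]$. Taking $f(y)=y-\b$ gives $f(y)^t=(y-\b)^t$, and by the notational convention fixed just before the statement, $V(\s,(y-\b)^t)=V_t(\s,\b)$. Hence, as $f$ runs over ${\rm Irr}[y]$ and correspondingly $\b$ runs over $k^{\times}$, the family $\{V(\s, f^t)\}$ is precisely the family $\{V_t(\s,\b)\}$. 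Substituting this identification into the list of \thref{2.3} produces exactly the asserted set.

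I do not expect a genuine obstacle here, since completeness is inherited verbatim from \thref{2.3} and the content reduces to the elementary classification of irreducibles over an algebraically closed field. The one point I would take care to verify is that the reparametrization introduces no collapse or redundancy: distinct $\b\in k^{\times}$ give non-isomorphic $V_t(\s,\b)$ because $\b\mapsto y-\b$ is injective and the $V(\s, f^t)$ are pairwise non-isomorphic by \thref{2.3}, while the excluded value $\b=0$ need not be listed separately since $V_t(\s,0)=V_{ts}(\s)$ already appears in the first family. This confirms that the displayed set is both exhaustive and non-redundant.
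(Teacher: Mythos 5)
Your proposal is correct and coincides with the paper's own (very brief) justification: the corollary is obtained from \thref{2.3} simply by observing that over an algebraically closed field every monic irreducible polynomial with nonzero constant term is of the form $y-\b$ with $\b\in k^{\times}$, whence $V(\s,(y-\b)^t)=V_t(\s,\b)$. Your added check that the reparametrization is a bijection and introduces no redundancy is a reasonable extra precaution but not something the paper spells out.
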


\begin{remark}
If $k$ is algebraically closed and $G$ is abelian,
then any finite dimensional $H$-module is a weight module.
In this case, \coref{2.4} gives the classification of
finite dimensional indecomposable $H$-modules.
\end{remark}

\begin{remark}
For any $t\>1$, $\l\in\hat{G}$ and $\b\in k^{\times}$, the linear endomorphism of $V_t(\l)$
induced by the action of $x$ is nilpotent. However, the linear endomorphism of $V_t(\l, \b)$
induced by the action of $x$ is invertible. In the following, $V_t(\l)$ is called a module of
{\it nilpotent type}, and  $V_t(\l, \b)$ is called a module of {\it non-nilpotent type}.
\end{remark}

\section{Decomposition rules for indecomposable weight modules}\selabel{3}

In this section, we assume that $k$ is an algebraically closed field of characteristic zero.
We will investigate the tensor products of finite dimensional indecomposable weight modules over
$H$, and decompose such tensor products into the direct sum of indecomposable modules
in the two cases that $|\chi|=\infty$ and $\chi(a)$ is not a root of unity, and that
$|\chi|=s<\infty$ and $\chi(a)$ is a primitive $s^{th}$ root of unity, respectively.

{\bf Convention}: If $\oplus_{l\leqslant i\leqslant m}M_i$ is a term in a decomposition of a module,
then it disappears when $l>m$. For a module $M$ and a nonnegative integer $r$, let $rM$ denote
the direct sum of $r$ copies of $M$. In particular, $rM=0$ when $r=0$.

\subsection{The case of $|\chi|=|\chi(a)|=\infty$}\selabel{3.1}

In this subsection, we consider the case that $|\chi|=\infty$ and $\chi(a)$ is not root of unity.

\begin{lemma}\lelabel{3.1}
Let $0\neq q\in k$, and $l,m,i,j\in\mathbb{Z}$ with $0\leq i\leq l-2$ and $0\leq j\leq m-2$. Then
$$q^{-j-1}(l-i-1)_q-q^{1-m}(m-j-1)_q=q^{-j}(l-i-2)_q-q^{1-m}(m-j-2)_q.$$
\end{lemma}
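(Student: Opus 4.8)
The plan is to move everything to one side and show that the difference vanishes identically as a function of $q$. Writing $D$ for (left side) $-$ (right side), I would group the four terms into the two involving $l-i$ and the two involving $m-j$:
$$D=\left(q^{-j-1}(l-i-1)_q-q^{-j}(l-i-2)_q\right)-q^{1-m}\left((m-j-1)_q-(m-j-2)_q\right).$$

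The main tool will be two elementary recurrences for the $q$-integers $(n)_q=1+q+\cdots+q^{n-1}$, both valid for every integer $n\geq 1$ and every $q\in k$:
$$(n)_q-q\,(n-1)_q=1\quad\text{and}\quad (n)_q-(n-1)_q=q^{n-1}.$$
Both follow at once from the definition by cancelling the overlapping geometric sums, so no case distinction on whether $q=1$ is needed.

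For the first group I would factor out $q^{-j-1}$ to get $q^{-j-1}\bigl((l-i-1)_q-q\,(l-i-2)_q\bigr)$; since $0\leq i\leq l-2$ forces $l-i-1\geq 1$, the first recurrence applies with $n=l-i-1$ and the bracket collapses to $1$, leaving $q^{-j-1}$. For the second group, since $0\leq j\leq m-2$ forces $m-j-1\geq 1$, the second recurrence with $n=m-j-1$ gives $(m-j-1)_q-(m-j-2)_q=q^{m-j-2}$, so that whole term equals $q^{1-m}\cdot q^{m-j-2}=q^{-j-1}$. Hence $D=q^{-j-1}-q^{-j-1}=0$, which is the assertion.

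There is no real obstacle here: the statement is a routine $q$-number identity. The only points requiring a little care are checking that the index bounds $0\leq i\leq l-2$ and $0\leq j\leq m-2$ keep every relevant $q$-integer at argument $\geq 1$ (so the recurrences apply and, incidentally, all arguments stay nonnegative), and observing that working through the recurrences rather than the closed form $\frac{q^n-1}{q-1}$ sidesteps having to treat $q=1$ separately.
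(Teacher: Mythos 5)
Your proof is correct, and it is essentially the paper's approach: the paper simply states that the identity ``follows from a straightforward computation,'' and your argument via the two $q$-integer recurrences $(n)_q-q(n-1)_q=1$ and $(n)_q-(n-1)_q=q^{n-1}$ is a clean way of carrying out exactly that computation. The index bounds do keep all arguments of $(\cdot)_q$ at least $1$ as you note, so nothing is missing.
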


\begin{proof}
It follows from a straightforward computation.
\end{proof}

\begin{lemma}\lelabel{3.2}
Assume $|\chi|=\infty$ and $q:=\chi^{-1}(a)$ is not root of unity.
Then for any $\l, \s\in\hat{G}$ and $s,t\>2$, $V_s(\l)\ot V_t(\s)$ contains a submodule
isomorphic to $V_{s-1}(\chi\l)\ot V_{t-1}(\s)$.
\end{lemma}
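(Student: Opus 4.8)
The plan is to construct an explicit injective homomorphism of $H$-modules
$$\phi\colon V_{s-1}(\chi\l)\ot V_{t-1}(\s)\lra V_s(\l)\ot V_t(\s),$$
so that its image is the desired submodule. Write $\{m_0,\dots,m_{s-1}\}$ and $\{n_0,\dots,n_{t-1}\}$ for the standard bases of $V_s(\l)$ and $V_t(\s)$, and $\{p_0,\dots,p_{s-2}\}$, $\{q_0,\dots,q_{t-2}\}$ for those of $V_{s-1}(\chi\l)$ and $V_{t-1}(\s)$. Putting $q=\chi^{-1}(a)$, one has $a\cd n_j=q^{-j}\s(a)n_j$, so $\D(x)=x\ot a+1\ot x$ gives, on \emph{both} tensor products, the single formula $x\cd(u_i\ot v_j)=q^{-j}\s(a)(u_{i+1}\ot v_j)+u_i\ot v_{j+1}$ (terms with out-of-range index being zero), while $p_i\ot q_j$ and every $m_a\ot n_b$ with $a+b=i+j+1$ share the weight $\chi^{i+j+1}\l\s$. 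Since a module map between weight modules preserves weights, $\phi(p_i\ot q_j)$ is forced into the span of $\{m_a\ot n_b\mid a+b=i+j+1\}$; in particular the $G$-action is automatically respected, and the whole content is the $x$-action.

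First I would fix the shape of $\phi$ by the ansatz $\phi(p_i\ot q_j)=\sum_{r\>0}c^{\,i,j}_r\,m_{i+r}\ot n_{j+1-r}$, the sum running over those $r$ for which both indices are in range. The requirement $\phi(x\cd(p_i\ot q_j))=x\cd\phi(p_i\ot q_j)$ relates the coefficients of $\phi(p_i\ot q_j)$ to those of $\phi(p_{i+1}\ot q_j)$ and $\phi(p_i\ot q_{j+1})$; comparing coefficients of each $m_a\ot n_b$ turns this into scalar identities among the $c^{\,i,j}_r$. Solving these identities, starting from a chosen bottom row $\{\phi(p_i\ot q_0)\}_{0\<i\<s-2}$ and propagating upward, expresses every $c^{\,i,j}_r$ as a product and ratio of $q$-integers $(k)_q$. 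Already the cases $s,t\in\{2,3\}$ produce coefficients such as $(2)_q$ and force factorizations like $1+2q^{-1}+2q^{-2}+q^{-3}=(1+q^{-1})(1+q^{-1}+q^{-2})$, which is exactly the kind of simplification packaged by \leref{3.1}.

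The crux is that this system of scalar identities is consistent, and here both hypotheses enter. On one hand, the passage from one antidiagonal of coefficients to the next is governed precisely by the elementary identity of \leref{3.1} (with $l,m$ in the roles of $s,t$): it is what forces the two expressions obtained for a repeated coefficient to agree, so that $\phi$ is well defined and the closing relations $\phi(p_i\ot q_{t-1})=0$ hold. On the other hand, the recursion is solved by dividing by $q$-integers $(k)_q$ with $k\<s+t-1$, and these are all nonzero because $|\chi|=\infty$ forces $q=\chi^{-1}(a)$ not to be a root of unity; the same fact keeps the leading coefficient $c^{\,i,j}_0$ of the top term $m_i\ot n_{j+1}$ nonzero.

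Finally I would record injectivity. Order the basis vectors $m_a\ot n_b$ first by weight $a+b$ and then, within a fixed weight, by the second index $b$. For each $(i,j)$ the vector $\phi(p_i\ot q_j)$ lies in weight $i+j+1$, and among its terms $m_{i+r}\ot n_{j+1-r}$ the one with $r=0$, namely $m_i\ot n_{j+1}$, has the largest second index; moreover distinct source basis vectors of equal weight have distinct such leading terms. Hence the matrix of $\phi$ in these orderings is triangular with the nonzero leading coefficients on the diagonal, so $\phi$ is injective and its image is a submodule isomorphic to $V_{s-1}(\chi\l)\ot V_{t-1}(\s)$. The main obstacle is the consistency of the coefficient recursion; once \leref{3.1} is available this is a bookkeeping verification, and the non-vanishing of the relevant $q$-integers — that is, $q$ not being a root of unity — is what makes the construction go through.
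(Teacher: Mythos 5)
Your strategy is the same as the paper's: construct an explicit $H$-module embedding, use the weight decomposition (and $|\chi|=\infty$) to confine $\phi(p_i\ot q_j)$ to the antidiagonal $a+b=i+j+1$, invoke \leref{3.1} as the identity that makes the $x$-intertwining equations close, and deduce injectivity from triangularity with respect to a leading term whose coefficient is a nonzero $q$-integer. All of these ingredients are correctly identified, and your injectivity argument is sound once the map exists.

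The gap is that the proposal stops exactly where the content of the lemma lies: you reduce everything to the assertion that the system of scalar identities among the $c^{\,i,j}_r$ "is consistent," but you neither exhibit a solution nor prove consistency. Observing that \leref{3.1} reconciles two expressions for one repeated coefficient does not show that a globally consistent family with nonvanishing leading coefficients exists; moreover, your ``choose a bottom row and propagate upward'' procedure is not well posed as stated, since the closing relations at $j=t-2$ (where $x\cd(p_i\ot q_{t-2})=q^{\,2-t}\s(a)\,p_{i+1}\ot q_{t-2}$, forcing $x\cd\phi(p_i\ot q_{t-2})-q^{\,2-t}\s(a)\phi(p_{i+1}\ot q_{t-2})=0$) impose nontrivial constraints on the bottom row that an arbitrary choice will violate, and you give no recipe for a valid choice. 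The paper closes this gap by simply writing down a two-term solution,
$$f(y_i \ot z_j)=(s-i-1)_q\, m_i \ot v_{j+1}-q^{1-t}\s(a)(t-j-1)_{q}\,m_{i+1}\ot v_{j},\qquad 0\<i\<s-2,\ 0\<j\<t-2.$$
With this formula your outline completes verbatim: the coefficients of $m_i\ot v_{j+2}$ and $m_{i+2}\ot v_j$ in $f(x\cd(y_i\ot z_j))$ and $x\cd f(y_i\ot z_j)$ agree trivially, the coefficient of $m_{i+1}\ot v_{j+1}$ agrees precisely by \leref{3.1} with $(l,m)=(s,t)$, the boundary cases are absorbed by the convention $(0)_q=0$, and your triangular ordering gives injectivity because $(s-i-1)_q\neq0$ for $0\<i\<s-2$ when $q$ is not a root of unity. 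Note also that, contrary to your description, no division by $q$-integers occurs anywhere in the construction; the hypothesis that $q$ is not a root of unity is needed only to keep the leading coefficients nonzero and the weights $\chi^d\l\s$ pairwise distinct.
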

\begin{proof}
Let $\{m_0, m_1, \cdots, m_{s-1}\}$ and $\{v_0, v_1, \cdots, v_{t-1}\}$ be the standard bases of $V_s(\l)$
and $V_t(\s)$ as stated in the last section, respectively.
Then $\{m_i\ot v_j|0\<i\<s-1, 0\<j\<t-1\}$ is a basis of $V_s(\l)\ot V_t(\s)$.
Let $\{y_0, y_1, \cdots, y_{s-2}\}$ and $\{z_0, z_1, \cdots, z_{t-2}\}$ be the standard bases of $V_{s-1}(\chi\l)$
and $V_{t-1}(\s)$, respectively. Then $\{y_i\ot z_j|0\<i\<s-2, 0\<j\<t-2\}$ is a basis
$V_{s-1}(\chi\l)\ot V_{t-1}(\s)$. Define a linear map $f: V_{s-1}(\chi\l)\ot V_{t-1}(\s)\ra V_s(\l)\ot V_t(\s)$
by
$$f(y_i \ot z_j)=(s-i-1)_q m_i \ot v_{j+1}-q^{1-t}\sigma(a)(t-j-1)_{q}m_{i+1}\ot v_{j},$$
where $0\<i\<s-2$ and $0\<j\<t-2$. Then it is easy to see that $f$ is injective.
By \leref{3.1}, a straightforward computation shows that $f(x(y_i \ot z_j))=xf(y_i\ot z_j)$
and $f(g(y_i\ot z_j))=gf(y_i\ot z_j)$ for all $0\<i\<s-2$ and $0\<j\<t-2$.
Therefore, $f$ is an $H$-module map. This completes the proof.
\end{proof}

\begin{theorem}\thlabel{3.3}
Assume $|\chi|=\infty$ and $q:=\chi^{-1}(a)$ is not a root of unity. Then for any
$\l, \s\in\hat{G}$ and $s, t\>1$,
$$V_s(\l)\ot V_t(\s)\cong\oplus_{k=1}^{{\rm min}\{s,t\}}V_{s+t+1-2k}(\chi^{k-1}\l\s)$$
\end{theorem}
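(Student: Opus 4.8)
The plan is to prove the decomposition formula by induction on $\min\{s,t\}$, using \leref{3.2} as the engine that peels off one summand at a time. First I would dispose of the base case $\min\{s,t\}=1$, say $s=1$, where $V_1(\l)\cong V_\l$ is one-dimensional. Here $V_\l\ot V_t(\s)$ is $t$-dimensional, and a direct check on the standard basis $\{1_\l\ot v_j\}$ shows that the $G$-action gives $g\cd(1_\l\ot v_j)=\chi^j(g)(\l\s)(g)(1_\l\ot v_j)$ while $x$ shifts $v_j\mapsto v_{j+1}$ (using $\D(x)=x\ot a+1\ot x$ together with $x\cd 1_\l=0$), so $V_\l\ot V_t(\s)\cong V_t(\l\s)$, matching the single term $V_{1+t+1-2}(\chi^0\l\s)=V_t(\l\s)$ on the right.

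For the inductive step with $s,t\>2$, I would compute the total dimension of both sides to set up a dimension count: the left side has dimension $st$, and a telescoping computation shows $\sum_{k=1}^{\min\{s,t\}}(s+t+1-2k)=st$, so the claimed formula is at least dimensionally consistent. The heart of the argument is then to combine \leref{3.2} with the induction hypothesis. By \leref{3.2} there is a submodule $U\cong V_{s-1}(\chi\l)\ot V_{t-1}(\s)$ inside $V_s(\l)\ot V_t(\s)$, and applying the inductive hypothesis to $U$ gives
$$U\cong\oplus_{k=1}^{\min\{s-1,t-1\}}V_{(s-1)+(t-1)+1-2k}(\chi^{k-1}(\chi\l)\s)=\oplus_{k=1}^{\min\{s,t\}-1}V_{s+t-1-2k}(\chi^{k}\l\s).$$
Reindexing $k\mapsto k-1$ identifies this with $\oplus_{k=2}^{\min\{s,t\}}V_{s+t+1-2k}(\chi^{k-1}\l\s)$, which is precisely the target sum with the first ($k=1$) term $V_{s+t-1}(\l\s)$ omitted.

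It then remains to produce the missing top summand $V_{s+t-1}(\l\s)$ as a complement to $U$. The cleanest route is to exhibit the highest-weight generator explicitly: $m_0\ot v_0$ has weight $\l\s$ and, since the category $\mcw$ is closed under tensor products and every indecomposable of nilpotent type is generated by a single highest-weight vector, the cyclic submodule $\langle m_0\ot v_0\rangle$ is a nilpotent-type module $V_r(\l\s)$ for some $r$. I would argue that $x^{s+t-2}\cd(m_0\ot v_0)\neq0$ but $x^{s+t-1}\cd(m_0\ot v_0)=0$, forcing $r=s+t-1$; the nonvanishing is where the hypothesis that $q=\chi^{-1}(a)$ is \emph{not} a root of unity is essential, since it guarantees the relevant $q$-integer coefficients $(l-i-1)_q$ appearing in \leref{3.2} do not vanish. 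Finally, a weight-space count (comparing $\dim(V_s(\l)\ot V_t(\s))_{(\chi^i\l\s)}$ with the contributions from $U$ and from $\langle m_0\ot v_0\rangle$) shows $V_{s+t-1}(\l\s)\cap U=0$ and that the two pieces together exhaust all $st$ dimensions, yielding the direct-sum decomposition.

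The main obstacle I anticipate is the complementation step: \leref{3.2} only delivers $U$ as a submodule, so one must independently verify that $\langle m_0\ot v_0\rangle$ realizes the full Jordan string of length $s+t-1$ and meets $U$ trivially. Carrying this out cleanly will likely require tracking the $x$-action on $m_0\ot v_0$ through the coproduct and showing the leading coefficient is a nonzero product of $q$-integers, which is exactly the point at which the non-root-of-unity hypothesis is consumed; the rest is the dimension bookkeeping above.
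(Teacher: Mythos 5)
Your proposal follows essentially the same route as the paper: induction on $\min\{s,t\}$, using \leref{3.2} together with the induction hypothesis to produce all summands except the top one, and the cyclic submodule $\langle m_0\ot v_0\rangle\cong V_{s+t-1}(\l\s)$ (via $x^{s+t-2}(m_0\ot v_0)\neq0$, which rests on the nonvanishing of the $q$-binomial coefficient $\binom{s+t-2}{s-1}_q$ for $q$ not a root of unity) for the missing piece. The one step to make precise is the trivial intersection $\langle m_0\ot v_0\rangle\cap U=0$: a pure dimension or weight-space count does not give this by itself, and the paper instead compares socles --- ${\rm soc}\langle m_0\ot v_0\rangle$ lies in weight $\chi^{s+t-2}\l\s$, which does not occur in ${\rm soc}\,U$ because $|\chi|=\infty$ --- after which the dimension count finishes the argument exactly as you describe.
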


\begin{proof}
Let $M=V_s(\l)\ot V_t(\s)$ and let $l_{0}={\rm min}\{s,t\}$ and $l_1={\rm max}\{s,t\}$.
We will prove $M\cong\oplus_{k=1}^{l_0}V_{s+t+1-2k}(\chi^{k-1}\l\s)$ by induction on $l_0$.
Let $\{m_0, m_1, \cdots, m_{s-1}\}$ and $\{v_0, v_1, \cdots, v_{t-1}\}$
be the standard bases of $V_s(\l)$ and $V_t(\s)$, respectively.
For $l_0=1$, it follows from a straightforward verification.

For $l_0=2$, we assume $t\>s=2$ since the proof is similar for $s>t=2$.
Since $\triangle(x)=x\ot a+1\ot x$ and $xa=qax$, we have $\D(x^m)=\sum_{i=0}^m\binom{m}{i}_qx^i\ot a^ix^{m-i}$
for all $m\>1$. Hence $x^{t+1}(m_0\ot v_0)=\sum_{i=0}^{t+1}\binom{t+1}{i}_qx^im_0\ot a^ix^{t+1-i}v_0=0$ and
$x^{t}(m_0\ot v_0)=\sum_{i=0}^{t}\binom{t}{i}_qx^{i}m_0\ot a^{i}x^{t-i}v_0
=\binom{t}{1}_qq^{1-t}\s(a)m_1\ot v_{t-1}\neq0$, which implies that
$\langle m_0\ot v_0\rangle\cong V_{t+1}(\l\s)$ since $m_0\ot v_0\in M_{(\l\s)}$.
Let $\a=-q^{1-t}(t-1)_q\s(a)\in k$ and $v=m_0\ot v_1+\a m_1\ot v_0\in M_{(\chi\l\s)}$. Then we have
$$\begin{array}{rl}
x^{t-1}v=&\sum_{i=0}^{t-1}\binom{t-1}{i}_qx^im_0\ot a^ix^{t-1-i}v_1
+\a\sum_{i=0}^{t-1}\binom{t-1}{i}_qx^im_1\ot a^ix^{t-1-i}v_0\\
=&\binom{t-1}{1}_q\chi^{t-1}(a)\s(a)m_1\ot v_{t-1}+\a m_1\ot v_{t-1}\\
=&(\a+q^{1-t}(t-1)_q\s(a))m_1\ot v_{t-1}=0.\\
\end{array}$$
If $t=2$, then $x^{t-2}v=v\neq 0$. If $t>2$, then
$$\begin{array}{rl}
x^{t-2}v=&\sum_{i=0}^{t-2}\binom{t-2}{i}_qx^im_0\ot a^ix^{t-2-i}v_1
+\a\sum_{i=0}^{t-2}\binom{t-2}{i}_qx^im_1\ot a^ix^{t-2-i}v_0\\
=&m_0\ot v_{t-1}+\binom{t-2}{1}_q\chi^{t-2}(a)\s(a)m_1\ot v_{t-2}+\a m_1\ot v_{t-2}\\
=&m_0\ot v_{t-1}+(\a+q^{2-t}(t-2)_q\s(a))m_1\ot v_{t-2}\neq 0.\\
\end{array}$$
It follows that $\langle v\rangle\cong V_{t-1}(\chi\l\s)$.
Since soc$\langle m_0\ot v_0\rangle\subseteq M_{(\chi^t\l\s)}$ and
soc$\langle v\rangle\subseteq M_{(\chi^{t-1}\l\s)}$, the sum
$\langle m_0\ot v_0\rangle+\langle v\rangle$ is direct. Then by comparing their dimensions,
one finds that $M=\langle m_0\ot v_0\rangle\oplus\langle v\rangle\cong V_{t+1}(\l\s)\oplus V_{t-1}(\chi\l\s)$,
as desired.

Now assume $l_0\>3$. By \leref{3.2}, there exists a submodules $N$ of $M$ such that
$N\cong V_{s-1}(\chi\l)\ot V_{t-1}(\s)$.
By the induction hypothesis, we have
$N\cong\oplus_{k=1}^{l_0-1}V_{s+t-1-2k}(\chi^{k}\l\s)\cong\oplus_{k=2}^{l_0}V_{s+t+1-2k}(\chi^{k-1}\l\s)$.
Hence ${\rm soc}N\cong\oplus_{k=2}^{l_0}V_{\chi^{s+t-1-k}\s\l}$.
Now let us consider the submodules $\langle m_0\ot v_0\rangle$ of $M$.
We have
$$\begin{array}{rl}
x^{s+t-2}(m_0\ot v_0)=&\sum_{i=0}^{s+t-2}\binom{s+t-2}{i}_qx^im_0\ot a^ix^{s+t-2-i}v_0\\
=&\binom{s+t-2}{s-1}_qm_{s-1}\ot a^{s-1}v_{t-1}\\
=&\binom{s+t-2}{s-1}_qq^{(1-t)(s-1)}\s(a)^{s-1}m_{s-1}\ot v_{t-1}\neq 0\\
\end{array}$$
and then $x^{s+t-1}(m_0\ot v_0)=0$ since $x(m_{s-1}\ot v_{t-1})=0$.
It follows that $\langle m_0\ot v_0\rangle\cong V_{s+t-1}(\s\l)$.
Since ${\rm soc}V_{s+t-1}(\s\l)=V_{\chi^{s+t-2}\s\l}$,
$N\cap \langle m_1\ot v_1\rangle=0$, and so the sum $N+\langle m_0\ot v_0\rangle$ is direct.
By comparing their dimensions, one gets that
$M=\langle m_0\ot v_0\rangle\oplus N\cong\oplus_{k=1}^{l_0}V_{s+t+1-2k}(\chi^{k-1}\l\s)$.
This completes the proof.
\end{proof}

\subsection{The case of $|\chi|=|\chi(a)|<\infty$}\selabel{3.2}
In this subsection, we consider the case that $|\chi|<\infty$ and
$\chi(a)$ is a root of unity of order $|\chi|$.

\begin{lemma}\lelabel{3.4}
Assume that $|\chi|=s<\infty$ and $M$ is a finite dimensional weight $H$-modules.
Let $\varphi: M\ra M$ be the endomorphism of $H$-module $M$ defined by $\varphi(m)=x^{s}m$, $m\in M$.
Let $\l\in\Pi(M)$ and $V\subseteq M_{(\l)}$ a subspace. If $\varphi(V)\subseteq V$
and the restriction $\varphi|_V$ is a linear automorphism of $V$ with the
Jordan form matrix
$$J= \left(\begin{array}{cccc}
J_{k_1}(u_1)&&&\\
&J_{k_2}(u_2)&&\\
&&\ddots&\\
&&&J_{k_l}(u_l)\\
\end{array}\right),$$
then $M$ contains a submodule isomorphic to
$\oplus_{i=1}^lV_{k_i}(\l, u_i)$.
\end{lemma}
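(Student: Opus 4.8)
The plan is to realise each summand $V_{k_i}(\l,u_i)$ as a cyclic $H$-submodule generated by a suitable weight-$\l$ vector, after first diagonalising the situation through the Jordan decomposition. Recall the model: in $V_k(\l,u)=V(\l,(y-u)^k)$ with standard basis $\{m_0,\dots,m_{ks-1}\}$, the operator $x^s$ preserves the weight-$\l$ space $\mathrm{span}\{m_0,m_s,\dots,m_{(k-1)s}\}$ and acts there as the companion matrix of $(y-u)^k$, i.e.\ as a single Jordan block $J_k(u)$. This is exactly the shape we must reproduce inside $M$.

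First I would split $V=\oplus_{i=1}^{l}V^{(i)}$ according to the given Jordan form, so that $\varphi|_{V^{(i)}}$ has matrix $J_{k_i}(u_i)$; note $u_i\neq 0$ since $\varphi|_V$ is invertible, whence each $V_{k_i}(\l,u_i)$ is a genuine module of non-nilpotent type. In each block pick the cyclic vector $w_i\in V^{(i)}$ at the head of the Jordan chain, so $\{w_i,\varphi w_i,\dots,\varphi^{k_i-1}w_i\}$ is a basis of $V^{(i)}$ and $(\varphi-u_i)^{k_i}w_i=0$; writing $(y-u_i)^{k_i}=y^{k_i}-\sum_{j=0}^{k_i-1}\alpha_j^{(i)}y^j$ this becomes $x^{k_is}w_i=\sum_{j=0}^{k_i-1}\alpha_j^{(i)}x^{js}w_i$. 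Since $w_i\in M_{(\l)}$, each $x^p w_i$ is a weight vector (of weight $\chi^p\l$), so $G$ preserves $\mathrm{span}\{x^pw_i\}$ and, by the displayed relation, so does $x$; hence $\langle w_i\rangle=\mathrm{span}\{x^pw_i\mid 0\<p\<k_is-1\}$.

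Next I would define $f_i\colon V_{k_i}(\l,u_i)\to M$ by $m_p\mapsto x^pw_i$. The $G$-action matches because $w_i$ has weight $\l$, and the single nontrivial $x$-relation $x\cd m_{k_is-1}=\sum_j\alpha_j^{(i)}m_{js}$ is precisely the companion relation above, so $f_i$ is an $H$-module surjection onto $\langle w_i\rangle$. To see that the total map $\Phi=\oplus_i f_i\colon \oplus_i V_{k_i}(\l,u_i)\to M$ is injective, I work weight space by weight space: on the weight $\chi^r\l$ component ($0\<r\<s-1$) the image is spanned by the $x^r\varphi^pw_i$, and if $\sum_{i,p}c_{i,p}x^r\varphi^pw_i=0$ I set $u=\sum_{i,p}c_{i,p}\varphi^pw_i\in V$ and apply $x^{s-r}$, using that $x^r$ and $\varphi=x^s$ commute, to get $\varphi(u)=x^su=0$. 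As $\varphi|_V$ is an automorphism this forces $u=0$, and since $\{\varphi^pw_i\}_{i,p}$ is the Jordan basis of $V$, all $c_{i,p}$ vanish. Thus $\Phi$ is injective on each weight space, hence injective, and its image $\sum_i\langle w_i\rangle=\oplus_i\langle w_i\rangle\cong\oplus_i V_{k_i}(\l,u_i)$ is the required submodule.

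The main obstacle is the directness of the sum: all summands $\langle w_i\rangle$ carry the same weights $\l,\chi\l,\dots,\chi^{s-1}\l$, so directness cannot be read off from the weight grading and must be extracted from the invertibility hypothesis on $\varphi|_V$. The key device is the multiplication by $x^{s-r}$, which converts a linear relation sitting at weight $\chi^r\l$ into one governed by $\varphi$ at weight $\l$, where the Jordan basis supplies independence; this same manoeuvre simultaneously gives the injectivity of each individual $f_i$ and identifies $\dim\langle w_i\rangle=k_is$.
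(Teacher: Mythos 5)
Your argument is correct and follows essentially the same route as the paper: both proofs split $V$ into Jordan blocks, take the cyclic vectors $w_i$ of the chains $\{\varphi^p w_i\}$, generate the candidate submodules by applying $x^j$ for $0\leqslant j\leqslant s-1$, and deduce independence from the invertibility of $\varphi|_V$ by multiplying a relation in weight $\chi^r\l$ by $x^{s-r}$ to pull it back to $V$. Your explicit map $\Phi$ and the weight-by-weight injectivity check merely repackage the paper's dimension count $\dim(x^jV)=\dim(V)$, so no further comment is needed.
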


\begin{proof}
Assume that $V$ is a subspace of $M_{(\l)}$ with $\varphi(V)\subseteq V$ for some $\l\in\Pi(M)$,
and that the restriction $\varphi|_V$ is a linear automorphism of $V$ with
the Jordan form matrix given as in the lemma. Then $V=\oplus_{i=1}^lV_i$,
where $V_i$ is a $k_i$-dimensional subspace of $V$ such that $\varphi(V_i)=V_i$
and $(\varphi|_{V_i}-u_i{\rm id}_{V_i})^{k_i}=0$, $1\<i\<l$.
Moreover, there is a vector $\xi_i\in V_i$ such that
$\{\xi_i, \varphi(\xi_i), \cdots, \varphi^{k_i-1}(\xi_i)\}$
is a $k$-basis of $V_i$, $1\<i\<l$. Therefore,
$\{\xi_i, \varphi(\xi_i), \cdots, \varphi^{k_i-1}(\xi_i)|1\<i\<l\}$
is a $k$-basis of $V$.

Let $\langle V\rangle$ be the submodule of $M$ generated by $V$.
Then $\langle V\rangle=V+xV+\cdots+x^{s-1}V$ by $x^sV=V$.
Moreover, $\langle V\rangle_{(\chi^j\l)}=x^j\langle V\rangle_{(\l)}$
and ${\rm dim}(V)={\rm dim}(x^jV)$ for any $1\<j\<s-1$. This implies that
$\{x^j\xi_i, x^j\varphi(\xi_i), \cdots, x^j\varphi^{k_i-1}(\xi_i)|1\<i\<l\}$
is a $k$-basis of $x^jV$, $1\<j\<s-1$.
Note that $x^jV\subseteq M_{(\chi^j\l)}$.
Hence $\{x^j\xi_i, x^j\varphi(\xi_i), \cdots, x^j\varphi^{k_i-1}(\xi_i)|1\<i\<l, 0\<j\<s-1\}$
is a $k$-basis of $\langle V\rangle$, and
$\langle V\rangle=V\oplus xV\oplus \cdots\oplus x^{s-1}V$ as vector spaces.

For any $1\<i\<l$, it is straightforward to verify that
$${\rm span}\{x^j\xi_i, x^j\varphi(\xi_i), \cdots, x^j\varphi^{k_i-1}(\xi_i)|0\<j\<s-1\}$$
is a submodule of $\langle V\rangle$ isomorphic to $V_{k_i}(\l, u_i)$.
It follows that $\langle V\rangle\cong\oplus_{i=1}^lV_{k_i}(\l, u_i)$.
\end{proof}

Throughout the following, assume that $|\chi|=s<\infty$ and
$q=\chi^{-1}(a)$ is a primitive $s^{th}$ root of unity.
Let $V_0(\s,\b)=0$ and $V_0(\s)=0$ for any $\s\in\hat{G}$ and $\b\in k$.

\begin{lemma}\lelabel{3.5}
Let $\s\in\hat{G}$, $\b\in k^{\times}$ and $t\>1$. Then
$V_t(\s,\b)\ot V_{2s}(\e)\cong sV_{t-1}(\s,\b)\oplus sV_{t+1}(\s,\b)$,
where $\e$ is the identity of the group $\hat{G}$,
i.e., $\e(g)=1$ for all $g\in G$.
\end{lemma}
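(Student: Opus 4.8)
The plan is to use \leref{3.4} as the engine. That lemma takes a $\varphi$-stable subspace $V$ of a single weight space $M_{(\l)}$, where $\varphi(m)=x^s m$ acts as a linear automorphism, and from the Jordan form of $\varphi|_V$ produces a submodule of $M$ isomorphic to the corresponding direct sum of non-nilpotent modules $V_{k_i}(\l,u_i)$. So for $M=V_t(\s,\b)\ot V_{2s}(\e)$ the whole problem reduces to: pick a good weight $\l$, understand the action of $\varphi=x^s(-)$ on the weight space $M_{(\l)}$, and compute its Jordan form. Since $\dim V_t(\s,\b)=ts$ and $\dim V_{2s}(\e)=2s$, we have $\dim M=2ts^2$, and the claimed answer $sV_{t-1}(\s,\b)\oplus sV_{t+1}(\s,\b)$ has dimension $s\cdot(t-1)s+s\cdot(t+1)s=2ts^2$, so a dimension count will close the argument once the submodule is produced.

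First I would fix the standard bases: write $\{m_0,\dots,m_{ts-1}\}$ for $V_t(\s,\b)$ with $g\cdot m_i=\chi^i(g)\s(g)m_i$, $x\cdot m_i=m_{i+1}$ for $i\le ts-2$, and $x\cdot m_{ts-1}=\b m_0$ (so $x^{ts}=\b$ on this factor, and more usefully $\Pi(V_t(\s,\b))=\{\chi^i\s\}_{i=0}^{s-1}$). Write $\{v_0,\dots,v_{2s-1}\}$ for $V_{2s}(\e)=V_{2s}(\e,0)$, the \emph{nilpotent} module with $x\cdot v_{2s-1}=0$. The weights occurring in $M$ are $\chi^{i+j}\s$, and since $q=\chi^{-1}(a)$ has order exactly $s$ the character $\chi$ itself has order $s$, so weights repeat mod $s$. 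I would target the weight $\l=\s$ itself (the $i+j\equiv 0\pmod s$ part), whose weight space $M_{(\s)}$ is spanned by all $m_i\ot v_j$ with $i+j\equiv 0\pmod s$; this space has dimension $2ts^2/s=2ts$. On this space I must compute $\varphi=x^s$. Using $\D(x^s)=\sum_{i=0}^s\binom{s}{i}_q x^i\ot a^i x^{s-i}$ and the key fact that $\binom{s}{i}_q=0$ for $0<i<s$ when $q$ is a primitive $s$th root of unity, the comultiplication collapses: $\D(x^s)=x^s\ot a^s+1\ot x^s$. Hence on $M$, $\varphi$ acts as $x^s\ot a^s+1\ot x^s$, where $x^s$ on the first factor is essentially the "shift by $s$" operator (a scalar $\b$ times a cyclic wrap on the top block) and $x^s$ on the second factor is nilpotent with $x^{2s}=0$, i.e. it sends $v_j\mapsto v_{j+s}$ for $j\le s-1$ and kills $v_j$ for $j\ge s$.

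The real computation, and the main obstacle, is to diagonalize/Jordan-decompose $\varphi|_{M_{(\s)}}$. I would organize $M_{(\s)}$ into $\varphi$-invariant pieces indexed by the "diagonal position" and compute the matrix of $\varphi$ there. Concretely, restrict attention to basis vectors $m_i\ot v_j$ with $i+j\equiv0\pmod s$; here $a^s$ acts by the scalar $\s(a)^s$ (times a power of $\chi(a)^s$, which is $1$ since $|\chi(a)|=s$), so $\varphi$ becomes $\b\cdot(\text{shift}_i)\ot \s(a)^s + \mathrm{id}\ot(\text{shift}_j\text{-by-}s)$ restricted appropriately. Because the second shift is nilpotent of square zero and the first is invertible (powered by $\b\ne0$), $\varphi|_V$ is invertible; I expect its Jordan form to be exactly $s$ blocks of size $t{-}1$ with eigenvalue $\b'$ and $s$ blocks of size $t{+}1$ with eigenvalue $\b'$, for the appropriate scalar $\b'$ (a unit multiple of $\b$), matching $sV_{t-1}(\s,\b)\oplus sV_{t+1}(\s,\b)$. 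Pinning down these block sizes is where the bookkeeping lives: I would exhibit explicit eigenvector/generalized-eigenvector chains for $\varphi$ inside $M_{(\s)}$ and verify their lengths, then feed the resulting Jordan data into \leref{3.4} to get a submodule $\cong sV_{t-1}(\s,\b)\oplus sV_{t+1}(\s,\b)$ of $M$. Finally, comparing dimensions ($2ts^2$ on both sides) forces equality, completing the proof. A secondary subtlety I would watch is the edge case $t=1$, where $V_{t-1}(\s,\b)=V_0(\s,\b)=0$ by the convention just fixed, so the answer degenerates correctly to $sV_2(\s,\b)$.
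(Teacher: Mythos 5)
Your proposal follows the paper's own proof essentially verbatim: both restrict $\varphi=x^s(-)$ to the weight space $M_{(\s)}$, split it into $s$ invariant $2t$-dimensional pieces indexed by the diagonal position, identify the Jordan form of $\varphi$ on each piece as $J_{t-1}(\b)\oplus J_{t+1}(\b)$ (or $J_2(\b)$ when $t=1$), and then invoke \leref{3.4} together with a dimension count. The one point to tighten is the eigenvalue: it is exactly $\b$, not merely a unit multiple of it, because $a^s$ acts as the identity on $V_{2s}(\e)$ (since $\chi(a)^s=1$ and $\e$ is trivial); with that observed, the ``tedious but standard'' Jordan-form computation you defer is precisely the one the paper also defers.
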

\begin{proof}
Let $M=V_t(\s,\b)\ot V_{2s}(\e)$, and let $\varphi: M\ra M$ be the endomorphism of $M$
given by $\varphi(m)=x^{s}m$, $m\in M$.
Let $\{m_i|0\<i\<ts-1\}$ and $\{v_j|0\<j\<2s-1\}$ be the standard bases of
$V_t(\s,\b)$ and $V_{2s}(\e)$ as given in the last section, respectively.
Then $\{m_i\ot v_j|0\<i\<ts-1, 0\<j\<2s-1\}$ is a basis of $M$.
Obviously, $\Pi(M)=\{\s, \chi\s, \cdots, \chi^{s-1}\s\}$.
Moreover, $M_{(\chi^l\s)}={\rm span}\{m_i\ot v_j|0\<i\<ts-1,\ 0\<j\<2s-1,\ i+j\equiv l\ ({\rm mod}\ s)\}$
for all $0\<l\<s-1$. In particular,
$$M_{(\s)}={\rm span}\{m_{ns}\ot v_0, m_{ns}\ot v_s,
m_{ns+i}\ot v_{s-i}, m_{ns+i}\ot v_{2s-i}|0\<n\<t-1, 1\<i\<s-1\}.$$
Let $V_0={\rm span}\{m_{ns}\ot v_0, m_{ns}\ot v_s|0\<n\<t-1\}$ and
$V_i={\rm span}\{m_{ns+i}\ot v_{s-i}, m_{ns+i}\ot v_{2s-i}|0\<n\<t-1\}$ for $1\<i\<s-1$.
Then $M_{(\s)}=V_0\oplus V_1\oplus\cdots\oplus V_{s-1}$ as vector spaces and
$\varphi(V_i)\subseteq V_i$ for all $0\<i\<s-1$.
A straightforward computation shows that under the basis
$m_0\ot v_0, m_0\ot v_s, m_s\ot v_0, m_s\ot v_s, \cdots,
m_{(t-1)s}\ot v_0,m_{(t-1)s}\ot v_s$ of $V_0$, the matrix of the restriction $\varphi|_{V_0}$ is
$$A=\left(\begin{array}{ccccc}
B &  0 & \cdots & 0 & \a_0I \\
I & B & \cdots & 0 & \a_1I\\
\vdots & \ddots & \ddots & \vdots & \vdots  \\
0 &  0 & \ddots & B & \a_{t-2}I\\
0 & 0 & \cdots & I & \a_{t-1}I+B \\
\end{array}\right),$$
where
$B=\left(\begin{array}{cc}
0&0\\
1&0\\
\end{array}\right)$, $I=\left(\begin{array}{cc}
1&0\\
0&1\\
\end{array}\right)$, the identity matrix, and $\a_i=(-1)^{t+1-i}\binom{t}{i}\b^{t-i}$ for $0\<i\<t-1$.
Similarly, under the basis $m_i\ot v_{s-i}, m_i\ot v_{2s-i}, m_{s+i}\ot v_{s-i}, m_{s+i}\ot v_{2s-i},
\cdots, m_{(t-1)s+i}\ot v_{s-i}, m_{(t-1)s}\ot v_{2s-i}$ of $V_i$, the matrix of the
restriction $\varphi|_{V_i}$ is also the matrix $A$, where $1\<i\<s-1$.
Hence the matrix of the restriction $\varphi|_{M_{(\s\l)}}$ (under some suitable basis) is
$$C=\left(\begin{array}{cccc}
A&&&\\
&A&&\\
&&\ddots&\\
&&&A\\
\end{array}\right).$$
One can check that ${\rm det}(A)\neq0$. When $t=1$, $A=J_2(\b)$. When $t>1$,
a tedious but standard computation shows that the Jordan form of $A$
is $\left(\begin{array}{cc}
J_{t-1}(\b)&0\\
0&J_{t+1}(\b)\\
\end{array}\right)$.
Thus, it follows from \leref{3.4} that
$V_t(\s,\b)\ot V_{2s}(\e)$ contains a submodule isomorphic to
$sV_{t-1}(\s, \b)\oplus sV_{t+1}(\s, \b)$,
and so $V_t(\s,\b)\ot V_{2s}(\e)\cong sV_{t-1}(\s, \b)\oplus sV_{t+1}(\s, \b)$
since the modules on the both sides have the same dimension $2ts^2$.
\end{proof}

\begin{theorem}\thlabel{3.6}
Let $p, t\in\mathbb{Z}$ with $p,t\>1$, $\l, \s\in\hat{G}$ and $\b\in k^{\times}$.
Let $p=us+r$ with $u\>0$ and $0\<r<s$. Then
$$\begin{array}{rl}
V_p(\l)\ot V_t(\s,\b)
\cong&(\oplus_{1\<i\<{\rm min}\{t,u\}}(s-r)V_{2i-1+|t-u|}(\s\l,\b))\\
&\oplus(\oplus_{i=1}^{{\rm min}\{t,u+1\}}rV_{2i-1+|t-u-1|}(\s\l,\b)),\\
V_t(\s,\b)\ot V_p(\l)
\cong&(\oplus_{1\<i\<{\rm min}\{t, u\}}(s-r)V_{2i-1+|t-u|}(\s\l,\l(a)^s\b))\\
&\oplus(\oplus_{i=1}^{{\rm min}\{t, u+1\}}rV_{2i-1+|t-u-1|}(\s\l,\l(a)^s\b)).\\
\end{array}$$
\end{theorem}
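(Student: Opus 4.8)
The plan is to analyse the single operator $\varphi=x^s$ on $M=V_p(\l)\ot V_t(\s,\b)$ and to read off the decomposition from \leref{3.4}. First I would record that, since $q=\chi^{-1}(a)$ is a primitive $s$-th root of unity, the $q$-binomial coefficients $\binom{s}{i}_q$ vanish for $0<i<s$, so that $\D(x^s)=x^s\ot a^s+1\ot x^s$. As $x^sg=\chi^{-s}(g)gx^s=gx^s$, the map $\varphi$ commutes with $G$ and hence preserves every weight space. Because $a^s$ acts on $V_t(\s,\b)$ as the scalar $\s(a)^s$ (note $\chi(a)^s=q^{-s}=1$), the operator $\varphi$ on $M$ equals $\s(a)^s(N\ot\mathrm{id})+(\mathrm{id}\ot T)$, where $N=x^s|_{V_p(\l)}$ is nilpotent and $T=x^s|_{V_t(\s,\b)}$ is invertible; as these commute, $\varphi$ is an automorphism of $M$. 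A short count shows each weight space $M_{(\chi^i\l\s)}$ has dimension $pt$, so $x$ restricts to an isomorphism between consecutive weight spaces and $M=\langle M_{(\l\s)}\rangle$. Hence, by \leref{3.4} applied with $\l\s$ and $V=M_{(\l\s)}$, it suffices to compute the Jordan form of $\varphi|_{M_{(\l\s)}}$; the resulting submodule $\op_iV_{k_i}(\l\s,u_i)$ then exhausts $M$ by the dimension count $\dim\langle M_{(\l\s)}\rangle=s\cdot pt=\dim M$.

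Next I would compute that Jordan form. Writing $p=us+r$, decompose $V_p(\l)=\op_{c=0}^{s-1}W_c$ and $V_t(\s,\b)=\op_{c=0}^{s-1}U_c$ according to the residue $\bmod\,s$ of the basis index; then $N|_{W_c}$ is a single nilpotent Jordan block of size $d_c$, where $d_c=u+1$ for the $r$ values $c<r$ and $d_c=u$ for the $s-r$ values $c\>r$, while $T|_{U_c}$ is the companion matrix of $(y-\b)^t$, hence a single Jordan block $J_t(\b)$. Since $M_{(\l\s)}=\op_cW_c\ot U_{-c}$ (indices read $\bmod\,s$) and each summand is $\varphi$-stable, $\varphi|_{M_{(\l\s)}}$ is block diagonal with blocks similar to $\s(a)^s(J_{d_c}(0)\ot\mathrm{id})+(\mathrm{id}\ot J_t(\b))$. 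After subtracting the eigenvalue $\b$ and rescaling (legitimate since the scalar $\s(a)^s\neq0$), each block becomes $N_{d_c}\ot\mathrm{id}+\mathrm{id}\ot N_t$, the sum of two commuting regular nilpotents on a tensor product of cyclic modules; this is precisely the Clebsch--Gordan situation already governing \thref{3.3}, so its Jordan type consists of blocks of sizes $d_c+t+1-2k$ for $k=1,\dots,\min\{d_c,t\}$, all with eigenvalue $\b$.

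I expect this last fact --- pinning down the Jordan type of $N_d\ot\mathrm{id}+\mathrm{id}\ot N_t$ --- to be the main obstacle, since it is the only step that is not bookkeeping; I would either cite the classical partition $(d+t-1,d+t-3,\dots,|d-t|+1)$ or reprove it by exhibiting explicit cyclic generators and computing the lengths of their $\varphi$-orbits, in the spirit of the computation in \thref{3.3} (the special case $p=2s$ being \leref{3.5}). Finally I would assemble: by \leref{3.4} the $r$ residues with $d_c=u+1$ contribute $r\op_{k=1}^{\min\{u+1,t\}}V_{u+t+2-2k}(\l\s,\b)$ and the $s-r$ residues with $d_c=u$ contribute $(s-r)\op_{k=1}^{\min\{u,t\}}V_{u+t+1-2k}(\l\s,\b)$; reindexing the block sizes as $2i-1+|t-u-1|$ and $2i-1+|t-u|$ respectively yields the first stated formula. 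The second formula follows by the same argument applied to $V_t(\s,\b)\ot V_p(\l)$, where now $\D(x^s)$ places the scalar $a^s|_{V_p(\l)}=\l(a)^s$ in front, so $\varphi=\l(a)^s(T\ot\mathrm{id})+(\mathrm{id}\ot N)$ has the same block sizes but eigenvalue $\l(a)^s\b$, giving the summands $V_{\bullet}(\s\l,\l(a)^s\b)$.
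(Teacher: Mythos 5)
Your argument is correct, but it follows a genuinely different route from the paper's. The paper only performs the direct Jordan-form computation of $\varphi=x^s$ in the degenerate cases where one of the two commuting summands is trivial: for $p\<s$ (where $x^s$ kills $V_p(\l)$, so $\varphi=\mathrm{id}\ot T$ and each block is a companion matrix of $(y-\b)^t$) and for $t=1$ (where $T$ is the scalar $\b$, so each block is $J_{d_c}(\b)$). It then reaches general $t$ by induction, tensoring with $V_{2s}(\e)$, invoking \leref{3.5} on both sides of the associativity isomorphism, and cancelling with Krull--Schmidt. You instead treat all $p,t$ uniformly by writing $\varphi=\s(a)^s(N\ot\mathrm{id})+(\mathrm{id}\ot T)$ on the single weight space $M_{(\l\s)}$ (all the surrounding bookkeeping --- $\binom{s}{i}_q=0$ for $0<i<s$, the weight-space dimension $pt$, the block decomposition into $W_c\ot U_{-c}$, the rescaling away of $\s(a)^s$, and the final reindexing against $2i-1+|t-u|$ and $2i-1+|t-u-1|$ --- checks out), which buys you a one-step proof with no induction and no appeal to \leref{3.5}. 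The price is exactly the step you flag: you must know the Jordan type of $N_d\ot\mathrm{id}+\mathrm{id}\ot N_t$ for two commuting regular nilpotents, i.e.\ the partition $(d+t-1,d+t-3,\dots,|d-t|+1)$. That fact is classical and valid here (it needs ${\rm char}(k)=0$, which is in force; note it fails in small positive characteristic, which is presumably why the paper routes around it), but it is not proved in the paper and is not literally \thref{3.3}, whose operator is $N\ot D+1\ot N'$ with $D$ a non-scalar diagonal and $q$ not a root of unity; so to make your proof self-contained you would need to carry out the cyclic-generator computation you sketch, or cite the $\mathfrak{sl}_2$ Clebsch--Gordan decomposition restricted to the principal nilpotent. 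With that ingredient supplied, your proof is complete and arguably cleaner than the paper's.
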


\begin{proof}
We prove the theorem for $p\<s$ and $p>s$, respectively.
Let $\{m_i|0\<i\<p-1\}$ and $\{v_j|0\<j\<ts-1\}$
be the bases of $V_p(\l)$ and $V_t(\s,\b)$ as stated in the last section, respectively.
Let $M=V_p(\l)\ot V_t(\s,\b)$ and $N=V_t(\s,\b)\ot V_p(\l)$.
Then $\{m_i\ot v_j|0\<i\<p-1, 0\<j\<ts-1\}$
is a basis of $M$ and $\{v_j\ot m_i|0\<i\<p-1, 0\<j\<ts-1\}$
is a basis of $N$. Moreover, $\Pi(M)=\Pi(N)=\{\chi^l\l\s|0\<l\<s-1\}$
and for $0\<l\<s-1$,
$$\begin{array}{l}
M_{(\chi^l\l\s)}={\rm span}\{m_i\ot v_j|0\<i\<p-1, 0\<j\<ts-1, i+j\equiv l\ ({\rm mod}\ s)\},\\
N_{(\chi^l\l\s)}={\rm span}\{v_j\ot m_i|0\<i\<p-1, 0\<j\<ts-1, i+j\equiv l\ ({\rm mod}\ s)\}.\\
\end{array}$$
Let $\varphi: M\ra M$ and $\psi: N\ra N$ be the linear endomorphisms of $M$ and $N$ defined by
$\varphi(m)=x^sm$, $m\in M$ and $\psi(m)=x^sm$, $m\in N$, respectively.

Case 1: $p\<s$. In this case, the two isomorphisms in the theorem become
$M\cong pV_t(\l\s,\b)$ and $N\cong pV_t(\s\l,\l(a)^{s}\b)$, respectively.

We first consider $M$. Let $V_0={\rm span}\{m_0\ot v_{js}|0\<j\<t-1\}$ and $V_i={\rm span}\{m_i\ot v_{js-i}|1\<j\<t\}$
for $1\<i\<p-1$. Then $M_{(\l\s)}=V_0\oplus V_1\oplus\cdots\oplus V_{p-1}$ as vector spaces,
and $\varphi(V_i)\subseteq V_i$ for all $0\<i\<p-1$.

By $\D(x^{s})=x^{s}\ot a^{s}+1\ot x^{s}$, one can check that
under the basis $\{m_0\ot v_0, m_0\ot v_s, \cdots, m_0\ot v_{(t-1)s}\}$ of $V_0$,
the matrix of $\varphi|_{V_0}$ is
$$A=\left(\begin{array}{ccccc}
0&0&\cdots&0&\a_0  \\
1&0&\cdots&0&\a_1\\
\vdots&\ddots&\ddots&\vdots&\vdots\\
0&0&\ddots&0&\a_{t-2}\\
0&0&\cdots&1&\a_{t-1}\\
\end{array}\right),$$
where $\a_i=(-1)^{t+1-i}\binom{t}{i}\b^{t-i}$ for $0\<i\<t-1$.
Similarly, under the basis $\{m_i\ot v_{s-i}, m_i\ot v_{2s-i}, \cdots, m_i\ot v_{ts-i}\}$ of $V_i$,
the matrix of $\varphi|_{V_i}$ is also $A$, where $1\<i\<p-1$.
Hence under a suitable basis of $M_{(\l\s)}$, the matrix $B$ of $\varphi|_{M_{(\l\s)}}$ is
$$B=\left(\begin{array}{cccc}
A&&&  \\
&A&&\\
&&\ddots&\\
&&&A\\
\end{array}\right).$$
Clearly, ${\rm det}A\neq0$, and hence ${\rm det}B\neq0$. It is easy to check that
the Jordan form of $A$ is $J_t(\b)$, and so the Jordan form of $B$
is
$$\left(\begin{array}{cccc}
J_t(\b)&&&  \\
&J_t(\b)&&\\
&&\ddots&\\
&&&J_t(\b)\\
\end{array}\right).$$
It follows from \leref{3.4} that $M$ contains a submodule isomorphic to
$pV_t(\l\s, \b)$, and so $M\cong pV_t(\l\s,\b)$ since they have the same
dimension $pst$.

Now we consider $N$. Let $U_0={\rm span}\{v_{js}\ot m_0|0\<j\<t-1\}$
and $U_i={\rm span}\{v_{js-i}\ot m_i|1\<j\<t\}$
for $1\<i\<p-1$. Then $M_{(\l\s)}=U_0\oplus U_1\oplus\cdots\oplus U_{p-1}$ as vector spaces,
and $\psi(U_i)\subseteq U_i$ for all $0\<i\<p-1$.

By $\D(x^{s})=x^{s}\ot a^{s}+1\ot x^{s}$, one can check that
under the basis $\{v_0\ot m_0, v_s\ot m_0, \cdots, v_{(t-1)s}\ot m_0\}$ of $U_0$,
the matrix of $\psi|_{U_0}$ is $\a A$,
where $\a=\l(a)^s$, and $A$ and $\a_i$ are given as before, $0\<i\<t-1$.
Similarly, under the basis $\{v_{s-i}\ot m_i, v_{2s-i}\ot m_i, \cdots, v_{ts-i}\ot m_i\}$ of $U_i$,
the matrix of $\psi|_{U_i}$ is also $\a A$, where $1\<i\<p-1$.
Note that the Jordan form of $\a A$ is $J_t(\a\b)$. Then a similar argument as above
shows that $N\cong pV_t(\s\l,\l(a)^{s}\b)$.

Case 2: $p>s$. In this case, we first prove the decomposition of $M$ by induction on $t$.

Suppose that $t=1$. Let $V_j={\rm span}\{m_i\ot v_j|0\<i\<p-1, i+j\equiv 0\ ({\rm mod}\ s)\}$
for $0\<j\<s-1$. Then $M_{(\l\s)}=V_0\oplus V_1\oplus\cdots\oplus V_{s-1}$ as vector spaces,
and $\varphi(V_j)\subseteq V_j$ for all $0\<j\<s-1$.
If $r=0$ then $p=us$ and $u>1$ by $p>s$. In this case, dim$(V_j)=u$ for all $0\<j\<s-1$.
Let $\g=\s(a^s)$. Then one can check that under the basis
$\{m_{0}\ot v_{0}, \g m_{s}\ot v_{0}, \g^2m_{2s}\ot v_{0}, \cdots, \g^{u-1}m_{(u-1)s}\ot v_{0}\}$ of $V_0$,
the matrix of $\varphi|_{V_0}$ is $J_u(\b)$. Similarly, under the basis
$\{m_{s-j}\ot v_j, \g m_{2s-j}\ot v_j, \g^2m_{3s-j}\ot v_j, \cdots, \g^{u-1}m_{us-j}\ot v_j\}$ of $V_j$,
the matrix of $\varphi|_{V_j}$ is also $J_u(\b)$ for any $1\<j\<s-1$.
Thus, it follows from \leref{3.4} that $M$ contains a submodule isomorphic to $sV_u(\l\s, \b)$,
which implies $M\cong sV_u(\l\s, \b)$ since they have the same dimension. Now let $0<r<s$.
Then $u\>1$, dim$(V_j)=u$ when $1\<j\<s-r$, and dim$(V_j)=u+1$ when $j=0$ or $s-r<j\<s-1$.
A similar argument as above shows that when $1\<j\<s-r$, the matrix of $\varphi|_{V_j}$
under a suitable basis of $V_j$ is $J_u(\b)$, and that when $j=0$ or $s-r<j\<s-1$,
the matrix of $\varphi|_{V_j}$ under a suitable basis of $V_j$ is $J_{u+1}(\b)$.
Again by \leref{3.4}, $M$ contains a submodule isomorphic to $(s-r)V_u(\l\s, \b)\oplus rV_{u+1}(\l\s, \b)$,
and consequently, $M\cong(s-r)V_u(\l\s, \b)\oplus rV_{u+1}(\l\s, \b)$ since they have the same dimension.

Suppose that $t=2$. By \leref{3.5} and the result shown above, we have
$$\begin{array}{rl}
&V_p(\l)\ot V(\s,\b)\ot V_{2s}(\e)\\
\cong&((s-r)V_u(\s\l,\b)\oplus rV_{u+1}(\s\l,\b))\ot V_{2s}(\e)\\
\cong&(s-r)V_u(\s\l,\b)\ot V_{2s}(\e)\oplus rV_{u+1}(\s\l,\b)\ot V_{2s}(\e)\\
\cong&s(s-r)V_{u-1}(\s\l,\b)\oplus s(s-r)V_{u+1}(\s\l,\b)
\oplus srV_u(\s\l,\b)\oplus srV_{u+2}(\s\l,\b).\\
\end{array}$$
On the other hand, again by \leref{3.5}, we have
$V_p(\l)\ot V(\s,\b)\ot V_{2s}(\e)\cong V_p(\l)\ot(sV_2(\s,\b))\cong s(V_p(\l)\ot V_2(\s,\b))$.
It follows from Krull-Schmidt Theorem that
$$V_p(\l)\ot V_2(\s,\b)\cong(s-r)V_{u-1}(\s\l,\b)\oplus (s-r)V_{u+1}(\s\l,\b)
\oplus rV_u(\s\l,\b)\oplus rV_{u+2}(\s\l,\b),$$
as desired.

Now let $t\>3$. By the induction hypothesis and \leref{3.5}, we have
$$\begin{array}{rl}
&V_p(\l)\ot V_{t-1}(\s,\b)\ot V_{2s}(\e)\\
\cong&(\oplus_{i=1}^{{\rm min}\{t-1,u\}}(s-r)V_{2i-1+|t-u-1|}(\s\l,\b)\ot V_{2s}(\e))\\
&\oplus(\oplus_{i=1}^{{\rm min}\{t-1,u+1\}}rV_{2i-1+|t-u-2|}(\s\l,\b)\ot V_{2s}(\e))\\
\cong&(\oplus_{i=1}^{{\rm min}\{t-1,u\}}s(s-r)V_{2i-1+|t-u-1|-1}(\s\l,\b))
\oplus(\oplus_{i=1}^{{\rm min}\{t-1,u\}}s(s-r)V_{2i-1+|t-u-1|+1}(\s\l,\b))\\
&\oplus(\oplus_{i=1}^{{\rm min}\{t-1,u+1\}}srV_{2i-1+|t-u-2|-1}(\s\l,\b))
\oplus(\oplus_{i=1}^{{\rm min}\{t-1,u+1\}}srV_{2i-1+|t-u-2|+1}(\s\l,\b))\\
\end{array}$$
and
$$\begin{array}{rl}
V_p(\l)\ot V_{t-1}(\s,\b)\ot V_{2s}(\e)
\cong&V_p(\l)\ot (sV_{t-2}(\s,\b)\oplus sV_t(\s,\b))\\
\cong&sV_p(\l)\ot V_{t-2}(\s,\b)\oplus sV_p(\l)\ot V_t(\s,\b)\\
\cong&(\oplus_{i=1}^{{\rm min}\{t-2,u\}}s(s-r)V_{2i-1+|t-u-2|}(\s\l,\b))\\
&\oplus(\oplus_{i=1}^{{\rm min}\{t-2,u+1\}}srV_{2i-1+|t-u-3|}(\s\l,\b))
\oplus sV_p(\l)\ot V_t(\s,\b).\\
\end{array}$$
Then by a straightforward computation for $t-1<u$, $t-1=u$, $t-1=u+1$ and $t-1>u+1$ respectively,
it follows follows from Krull-Schmidt theorem that
$$V_p(\l)\ot V_t(\s,\b)\cong(\oplus_{i=1}^{{\rm min}\{t,u\}}(s-r)V_{2i-1+|t-u|}(\s\l,\b))
\oplus(\oplus_{i=1}^{{\rm min}\{t,u+1\}}rV_{2i-1+|t-u-1|}(\s\l,\b)).$$

For the decomposition of $N$, we also do by induction on $t$. We only work for $t=1$
since the proofs for $t=2$ and the induction step are similar to those for the decomposition of $M$.
Note that $V_{2s}(\e)\ot V_t(\s,\b)\cong sV_{t-1}(\s,\b)\oplus sV_{t+1}(\s,\b))$
by the discussion above.

Let $t=1$. Let $U_i={\rm span}\{v_i\ot m_j|0\<j\<p-1, i+j\equiv 0\ ({\rm mod}\ s)\}$
for $0\<i\<s-1$. Then $N_{(\l\s)}=U_0\oplus U_1\oplus\cdots\oplus U_{s-1}$ as vector spaces,
and $\psi(U_i)\subseteq U_i$ for all $0\<i\<s-1$.
If $r=0$ then $p=us$ and $u>1$ by $p>s$. In this case, dim$(U_i)=u$ for all $0\<i\<s-1$.
Let $\a=\l(a)^s$. Then one can check that under the basis
$\{v_0\ot m_0, v_0\ot m_s, v_0\ot m_{2s}, \cdots, v_0\ot m_{(u-1)s}\}$ of $U_0$,
the matrix of $\psi|_{U_0}$ is $J_u(\a\b)$. Similarly, under the basis
$\{v_j\ot m_{s-j}, v_j\ot m_{2s-j}, v_j\ot m_{3s-j}, \cdots, v_j\ot m_{us-j}\}$ of $U_j$,
the matrix of $\psi|_{U_j}$ is also $J_u(\a\b)$ for any $1\<j\<s-1$.
Thus, it follows from \leref{3.4} that $N$ contains a submodule isomorphic to $sV_u(\l\s, \a\b)$,
which implies $N\cong sV_u(\l\s, \a\b)$ since they have the same dimension. Now let $0<r<s$.
Then $u\>1$, dim$(U_j)=u$ when $1\<j\<s-r$, and dim$(U_j)=u+1$ when $j=0$ or $s-r<j\<s-1$.
A similar argument as above shows that when $1\<j\<s-r$, the matrix of $\psi|_{U_j}$
under a suitable basis of $U_j$ is $J_u(\a\b)$, and that when $j=0$ or $s-r<j\<s-1$,
the matrix of $\psi|_{U_j}$ under a suitable basis of $U_j$ is $J_{u+1}(\a\b)$.
Again by \leref{3.4}, $N$ contains a submodule isomorphic to $(s-r)V_u(\l\s, \a\b)\oplus rV_{u+1}(\l\s, \a\b)$,
and consequently, $N\cong(s-r)V_u(\l\s, \a\b)\oplus rV_{u+1}(\l\s, \a\b)$ since they have the same dimension.
\end{proof}

\begin{theorem}\thlabel{3.7}
Let $p, t\in\mathbb{Z}$ with $p,t\>1$, $\s, \l\in\hat{G}$ and $\a, \b\in k^{\times}$. Then
$$V_p(\s,\alpha)\ot V_t(\l,\b)\cong\oplus_{i=0}^{s-1}
\oplus_{j=1}^{{\rm min}\{p,t\}} V_{2j-1+|p-t|}(\chi^i\s\l,\alpha\l^{s}(a)+\beta).$$
\end{theorem}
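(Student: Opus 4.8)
The plan is to read off the decomposition from the single linear operator $\varphi\colon M\ra M$, $\varphi(m)=x^s m$, on $M=V_p(\s,\a)\ot V_t(\l,\b)$, exactly as in the proofs of \thref{3.3} and \thref{3.6}, and then feed its Jordan data into \leref{3.4}. First I would record the two facts that make $\varphi$ tractable. Since $q=\chi^{-1}(a)$ is a primitive $s^{th}$ root of unity, $\binom{s}{i}_q=0$ for $0<i<s$, whence $\D(x^s)=x^s\ot a^s+1\ot x^s$. Moreover $a\in Z(G)$ acts on a weight vector of weight $\chi^j\l$ in $V_t(\l,\b)$ by the scalar $(\chi^j\l)(a)^s=\chi(a)^{js}\l(a)^s=\l(a)^s$, because $\chi(a)^s=q^{-s}=1$; hence $a^s$ acts on all of $V_t(\l,\b)$ as the scalar $\l(a)^s$. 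Combining these, $\varphi$ acts on $M$ as $\l(a)^s(x^s\ot 1)+(1\ot x^s)$.

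Next I would resolve $\varphi$ weight space by weight space. Write $V_p(\s,\a)=\op_{i=0}^{s-1}P_i$ and $V_t(\l,\b)=\op_{j=0}^{s-1}Q_j$ for the weight-space decompositions, with $\dim P_i=p$ and $\dim Q_j=t$. From the explicit module structure, $x^s$ restricts to $P_i$ as the companion matrix of $(y-\a)^p$, hence is similar to $J_p(\a)$, and likewise $x^s|_{Q_j}$ is similar to $J_t(\b)$. Since $x^s$ is weight preserving ($\chi^s=\e$), the operator $\varphi$ preserves each summand of $M_{(\s\l)}=\op_{i+j\equiv 0\,({\rm mod}\,s)}P_i\ot Q_j$, a direct sum of exactly $s$ blocks, and on each block it acts as $\l(a)^sJ_p(\a)\ot 1+1\ot J_t(\b)$ in suitable bases.

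The computational heart is the Jordan form of this block. Rescaling a basis turns $\l(a)^sJ_p(\a)$ into $J_p(\l(a)^s\a)$, so the block is conjugate to the Kronecker sum $J_p(\mu)\ot 1+1\ot J_t(\nu)$ with $\mu=\l(a)^s\a$ and $\nu=\b$. This has the single eigenvalue $\g:=\mu+\nu=\a\l^s(a)+\b$, and its nilpotent part is the Kronecker sum of two nilpotent Jordan blocks, whose Jordan type is the classical Clebsch--Gordan list $\op_{k=1}^{{\rm min}\{p,t\}}J_{2k-1+|p-t|}(\g)$ --- the very same combinatorics already visible in \thref{3.3}, and verifiable directly by computing the ranks of the powers of the nilpotent part. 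Consequently $\varphi|_{M_{(\s\l)}}$ has Jordan form $s\op_{k=1}^{{\rm min}\{p,t\}}J_{2k-1+|p-t|}(\g)$.

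Finally I would invoke \leref{3.4} with $V=M_{(\s\l)}$. When $\g\neq0$ the restriction $\varphi|_{M_{(\s\l)}}$ is an automorphism, so \leref{3.4} produces a submodule $\cong s\op_{k}V_{2k-1+|p-t|}(\s\l,\g)$, and a dimension count ($\dim M=pts^2$) forces equality. Rewriting each summand via the isomorphism $V_m(\s\l,\g)\cong V_m(\chi^i\s\l,\g)$ --- which holds because the modules $V(\s,f)$ depend on $\s$ only through $[\s]\in\hat G/\langle\chi\rangle$ by \thref{2.3} --- redistributes the $s$ copies over the characters $\chi^i\s\l$ and yields the stated formula. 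The main obstacle is the edge case $\g=\a\l^s(a)+\b=0$: here $\varphi|_{M_{(\s\l)}}$ is nilpotent, so \leref{3.4} does not apply directly, and one must instead use the elementary analogue that a weight vector whose $x^s$-orbit spans a single nilpotent Jordan block of size $m$ generates a copy of $V_{ms}(\mu)=V_m(\mu,0)$; this recovers the same decomposition with the nilpotent modules $V_{2k-1+|p-t|}(\chi^i\s\l,0)$.
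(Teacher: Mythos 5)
Your strategy is genuinely different from the paper's, and in the main case it works. The paper performs the direct Jordan-form analysis of $\varphi(m)=x^sm$ only for the base case $p=1$, and then reaches general $p$ by induction, tensoring with $V_{2s}(\e)$ and cancelling with the Krull--Schmidt theorem (via \leref{3.5} and \thref{3.6}). You instead compute the Jordan form of $\varphi|_{M_{(\s\l)}}$ for all $p,t$ at once: since $\D(x^s)=x^s\ot a^s+1\ot x^s$ and $a^s$ acts on $V_t(\l,\b)$ by the scalar $\l(a)^s$, the operator $\varphi$ preserves each of the $s$ blocks $P_i\ot Q_j$ of $M_{(\s\l)}$ and is conjugate there to the Kronecker sum $J_p(\l(a)^s\a)\ot I+I\ot J_t(\b)$, whose Jordan type in characteristic zero is the Clebsch--Gordan list of sizes $2j-1+|p-t|$ with the single eigenvalue $\g:=\a\l^{s}(a)+\b$. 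When $\g\neq0$ this feeds into \leref{3.4}, and the dimension count together with $sV_m(\s\l,\g)\cong\oplus_{i=0}^{s-1}V_m(\chi^i\s\l,\g)$ gives the stated formula. This is shorter and more uniform than the paper's induction, at the price of invoking the (standard, characteristic-zero) Jordan type of a Kronecker sum of two nilpotent Jordan blocks.

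There is, however, a genuine gap in your treatment of the degenerate case $\g=\a\l^{s}(a)+\b=0$. The ``elementary analogue'' you invoke --- that a weight vector $\xi$ of weight $\mu$ whose $x^s$-orbit spans a nilpotent Jordan block of size $m$ generates a copy of $V_{ms}(\mu)$ --- is false as stated: the hypothesis only gives $x^{(m-1)s}\xi\neq0$ and $x^{ms}\xi=0$, hence $\langle\xi\rangle\cong V_d(\mu)$ for some $d$ with $(m-1)s<d\<ms$, and every such $d$ can occur. For instance, the generator $m_0$ of $V_{ms-1}(\mu)$ satisfies $x^{(m-1)s}m_0=m_{(m-1)s}\neq0$ and $x^{ms}m_0=0$, so its $\varphi$-block has size $m$, yet it generates a module of dimension $ms-1$, not $ms$. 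To conclude $d=ms$ one needs the stronger nonvanishing $x^{ms-1}\xi\neq0$, which is precisely the extra computation the paper carries out in its $p=1$, $\g=0$ case (showing $x^{ts-1}(m_i\ot v_0)\neq0$). Your argument can be repaired --- either by exhibiting explicit generators and verifying this nonvanishing, or by observing that the admissible block sizes $2j-1+|p-t|$ on every weight space differ by $2$, so a nilpotent summand $V_d(\mu)$ with $s\nmid d$ would contribute blocks of two consecutive sizes and is therefore excluded --- but as written the case $\a\l^{s}(a)+\b=0$ of the theorem is not proved.
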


\begin{proof}
We prove the theorem by induction on $p$. For $p=1$,
let $\{m_i|0\<i\<s-1\}$ and $\{v_j|0\<j\<ts-1\}$ be the bases of $V(\s,\a)$ and $V_t(\l,\b)$
as given in the last section, respectively.
Then $\{m_i\ot v_j|0\<i\<s-1, 0\<j\<ts-1\}$ is a basis of $M=V(\s,\a)\ot V_t(\l,\b)$.
Moreover, $\Pi(M)=\{\chi^l\s\l|0\<l\<s-1$ and
$M_{(\chi^l\s\l)}={\rm span}\{m_i\ot v_j|0\<i\<s-1, 0\<j\<ts-1, i+j\equiv l\ ({\rm mod}\ s)\}$.
For any $0\<i\<s-1$, let $V_i={\rm span}\{m_i\ot v_j|0\<j\<ts-1, i+j\equiv 0\ ({\rm mod}\ s)\}$.
Then $M_{(\s\l)}=V_0\oplus V_1\oplus\cdots\oplus V_{s-1}$ as vector spaces.
Let $\varphi: M\ra M$ be the linear endomorphism of $M$ defined by $\varphi(m)=x^sm$, $m\in M$.
Then $\varphi(V_i)\subseteq V_i$ for all $0\<i\<s-1$.
Let $\g=\l^{s}(a)$. Then one can check that under the basis
$\{m_0\ot v_0, m_0\ot v_s, \cdots, m_0\ot v_{(t-1)s}\}$,
the matrix of the restriction $\varphi|_{V_0}$ is
$$A=\left(\begin{array}{ccccc}
\a\g&0&\cdots&0&\a_0  \\
1&\a\g&\cdots&0&\a_1\\
\vdots&\ddots&\ddots&\vdots&\vdots\\
0&0&\ddots&\a\g&\a_{t-2}\\
0&0&\cdots&1&\a\g+\a_{t-1}\\
\end{array}\right),$$
where $\a_j=(-1)^{t+1-j}\binom{t}{j}\b^{t-j}$ for $0\<j\<t-1$.
Similarly, under the basis $\{m_i\ot v_{s-i}, m_i\ot v_{2s-i}, \cdots, m_i\ot v_{ts-i}\}$
of $V_i$, the matrix of the restriction $\varphi|_{V_i}$ is also $A$, where $1\<i\<s-1$.
It is straightforward to verify that the Jordan form of $A$ is $J_t(\a\g+\b)$.

If $\a\g+\b\neq 0$, then it follows from \leref{3.4} that $M$ contains a submodule
isomorphic to $sV_t(\s\l, \a\g+\b)$. Then by \cite[Lemma 3.7]{WangYouChen} and comparing the dimensions
of these modules, one gets that $M\cong sV_t(\s\l, \a\g+\b)\cong\oplus_{i=0}^{s-1}V_t(\chi^i\s\l, \a\g+\b)$.

If $\a\g+\b=0$, then by the discussion above, one knows that $\varphi^t(M_{(\s\l)})=0$.
Similarly, one can check that $\varphi^t(M_{(\chi^i\s\l)})=0$ for any $1\<i\<s-1$.
Hence $x^{ts}\cdot M=0$. For any $0\<i\<s-1$, let $\xi_i=m_i\ot v_0\in M_{(\chi^i\s\l)}$.
Then a straightforward computation shows that $x^{ts-1}\xi_i\neq 0$. Since $x^{ts}\xi_i=0$,
it follows that the submodule $\langle \xi_i\rangle$ of $M$ is isomorphic to
$V_{ts}(\chi^i\s\l)$. This implies that soc$(\langle \xi_i\rangle)\cong V_{\chi^{i-1}\s\l}$,
and consequently, the sum $\sum_{i=0}^{s-1}\langle \xi_i\rangle$ is direct.
Hence dim$(\sum_{i=0}^{s-1}\langle \xi_i\rangle)=ts^2={\rm dim}(M)$, and so
$M=\oplus_{i=0}^{s-1}\langle \xi_i\rangle\cong\oplus_{i=0}^{s-1}V_{ts}(\chi^i\s\l)
\cong\oplus_{i=0}^{s-1}V_t(\chi^i\s\l, 0)=\oplus_{i=0}^{s-1}V_t(\chi^i\s\l, \a\g+\b)$.

For $p=2$, by \leref{3.5}, \thref{3.6} and the above discussion, we have
$$V(\s,\a)\ot V_{2s}(\e)\ot V_t(\l,\b)\cong sV_2(\s,\a)\ot V_t(\l,\b))$$
and
$$\begin{array}{rl}
&V(\s,\alpha)\ot V_{2s}(\e)\ot V_t(\l,\b)\\
\cong&V(\s,\alpha)\ot(sV_{t-1}(\l,\b)\oplus sV_{t+1}(\l,\b))\\
\cong&(\oplus_{i=0}^{s-1}sV_{t-1}(\chi^i\s\l,\alpha\l^{s}(a)+\b))
\oplus(\oplus_{i=0}^{s-1}sV_{t+1}(\chi^i\s\l,\alpha\l^{s}(a)+\b)).\\
\end{array}$$
Then it follows from Krull-Schmidt Theorem that
$$V_2(\s,\alpha)\ot V_t(\l,\b)\cong(\oplus_{i=0}^{s-1}V_{t-1}(\chi^i\s\l,\alpha\l^{s}(a)+\b))
\oplus(\oplus_{i=0}^{s-1}V_{t+1}(\chi^i\s\l,\alpha\l^{s}(a)+\b)).$$

Now let $p\>3$. Then by \leref{3.5}, \thref{3.6} and the induction hypothesis, we have
$$\begin{array}{rl}
&V_{p-1}(\s,\alpha)\ot V_{2s}(\e)\ot V_t(\l,\b)\\
\cong& V_{p-1}(\s,\alpha)\ot(sV_{t-1}(\l,\b)\oplus sV_{t+1}(\l,\b))\\
\cong&sV_{p-1}(\s,\alpha)\ot V_{t-1}(\l,\b)
\oplus sV_{p-1}(\s,\alpha)\ot V_{t+1}(\l,\b))\\
\cong&(\oplus_{i=0}^{s-1}\oplus_{1\<j\<{\rm min}\{p-1,t-1\}}sV_{2j-1+|p-t|}(\chi^i\s\l,\alpha\l^{s}(a)+\beta))\\
&\oplus(\oplus_{i=0}^{s-1}\oplus_{j=1}^{{\rm min}\{p-1,t+1\}}sV_{2j-1+|p-t-2|}(\chi^i\s\l,\alpha\l^{s}(a)+\beta))\\
\end{array}$$
and
$$\begin{array}{rl}
&V_{p-1}(\s,\alpha)\ot V_{2s}(\e)\ot V_t(\l,\b)\\
\cong&(sV_{p-2}(\s,\alpha)\oplus sV_p(\s,\alpha))\ot V_{t}(\l,\b)\\
\cong& sV_{p-2}(\s,\alpha)\ot V_{t}(\l,\b)\oplus sV_{p}(\s,\alpha)\ot V_{t}(\l,\b)\\
\cong&(\oplus_{i=0}^{s-1}\oplus_{j=1}^{{\rm min}\{p-2,t\}}sV_{2j-1+|p-t-2|}(\chi^i\s\l,\alpha\l^{s}(a)+\beta))
\oplus sV_p(\s,\alpha)\ot V_t(\l,\b),\\
\end{array}$$
Now by a standard discussion for $p-1<t$, $p-1=t$, and $p-1>t$, respectively,
it follows from Krull-Schmidt Theorem that
$$V_p(\s,\alpha)\ot V_t(\l,\b)\cong\oplus_{i=0}^{s-1}\oplus_{j=1}^{{\rm min}\{p,t\}}
V_{2j-1+|p-t|}(\chi^i\s\l,\alpha\l^{s}(a)+\beta).$$
\end{proof}

\begin{lemma}\lelabel{3.8}
Let $\l, \s\in\hat{G}$ and $t\in\mathbb Z$ with $t\>1$. Then
$$V_1(\l)\ot V_t(\s)\cong V_t(\s)\ot V_1(\l)\cong V_t(\l\s).$$
\end{lemma}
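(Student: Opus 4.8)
The plan is to prove both isomorphisms by exhibiting explicit $H$-module isomorphisms at the level of bases, using only the coproduct $\D(x)=x\ot a+1\ot x$, $\D(g)=g\ot g$ together with the module actions on $V_1(\l)$ and $V_t(\s)$ recalled in the previous section. Write $v$ for a nonzero vector of the one-dimensional module $V_1(\l)=V_{\l}$, so that $g\cd v=\l(g)v$ and $x\cd v=0$, and let $\{m_0,\cds,m_{t-1}\}$ be the standard basis of $V_t(\s)$. I would also note at the outset that $\hat{G}$ is abelian, so $\l\s=\s\l$ throughout.

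First I would treat $V_1(\l)\ot V_t(\s)$. Setting $w_i:=v\ot m_i$ for $0\<i\<t-1$, the coproduct of $g$ gives $g\cd w_i=\l(g)\chi^i(g)\s(g)\,w_i=\chi^i(g)(\l\s)(g)\,w_i$, while $\D(x)=x\ot a+1\ot x$ together with $x\cd v=0$ kills the first summand and yields $x\cd w_i=v\ot(x\cd m_i)$. Hence $x\cd w_i=w_{i+1}$ for $0\<i\<t-2$ and $x\cd w_{t-1}=0$. This is verbatim the defining action on the standard basis of $V_t(\l\s)$, so the linear map $w_i\mapsto m_i$ is an $H$-module isomorphism $V_1(\l)\ot V_t(\s)\cong V_t(\l\s)$.

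For $V_t(\s)\ot V_1(\l)$ the situation differs only by a scalar. Putting $u_i:=m_i\ot v$, the $g$-action again produces the weight $\chi^i(\l\s)$, but now $x\cd u_i=(x\cd m_i)\ot(a\cd v)+m_i\ot(x\cd v)$; since $x\cd v=0$ and $a\cd v=\l(a)v$, this becomes $x\cd u_i=\l(a)(m_{i+1}\ot v)=\l(a)\,u_{i+1}$ for $0\<i\<t-2$, and $x\cd u_{t-1}=0$. The only anticipated obstacle is precisely this extra factor $\l(a)$, which prevents the naive identification from being an isomorphism onto the standard $V_t(\l\s)$. The fix is a diagonal rescaling: define $u_i':=\l(a)^i u_i$. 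Then $x\cd u_i'=\l(a)^i\cd\l(a)u_{i+1}=\l(a)^{i+1}u_{i+1}=u_{i+1}'$, while the $g$-weights are unchanged. Thus $u_i'\mapsto m_i$ gives the desired isomorphism $V_t(\s)\ot V_1(\l)\cong V_t(\l\s)$, completing the proof. The entire argument is a direct verification once the two basis maps are written down; no serious difficulty arises beyond keeping track of the scalar $\l(a)$ in the second case.
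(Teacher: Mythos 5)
Your proposal is correct and matches the paper, which simply states that the lemma ``follows from a straightforward verification''; you have carried out exactly that verification, including the necessary rescaling $u_i'=\l(a)^iu_i$ (valid since $\l(a)\in k^{\times}$) to absorb the scalar appearing in $V_t(\s)\ot V_1(\l)$.
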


\begin{proof}
It follows from a straightforward verification.
\end{proof}

\begin{lemma}\lelabel{3.9}
Let $n, t\in\mathbb Z$ with $n, t\>2$, and $\l, \s\in\hat{G}$.
Then any indecomposable summand of $V_n(\l)\ot V_t(\s)$ is of nilpotent type,
and the number of summands in the decomposition of $V_n(\l)\ot V_t(\s)$ into the direct sum
of indecomposable modules is equal to ${\rm min}\{n, t\}$.
\end{lemma}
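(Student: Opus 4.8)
The plan is to translate both assertions into facts about the single operator $x$ acting on $M:=V_n(\l)\ot V_t(\s)$, and to read them off from the explicit action of $x$ on the standard basis $\{m_i\ot v_j\mid 0\<i\<n-1,\ 0\<j\<t-1\}$, where $x(m_i\ot v_j)=\gamma_j\,m_{i+1}\ot v_j+m_i\ot v_{j+1}$ with $\gamma_j:=\chi^j(a)\s(a)\neq 0$ and the convention $m_n=0$, $v_t=0$.

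First I would establish that $x$ acts nilpotently on $M$. Using the identity $\D(x^N)=\sum_{i=0}^N\binom{N}{i}_q x^i\ot a^ix^{N-i}$ (already used in the proof of \thref{3.3}) together with $x^n\cdot V_n(\l)=0$ and $x^t\cdot V_t(\s)=0$, one checks that for $N=n+t-1$ every summand $x^i\ot a^ix^{N-i}$ annihilates $M$: indeed either $i\>n$ or $N-i\>t$. Hence $x^{n+t-1}\cdot M=0$. Since $x$ acts invertibly on every module of non-nilpotent type $V_k(\mu,\b)$, no indecomposable summand of $M$ can be of non-nilpotent type; by the classification in \coref{2.4}, every indecomposable summand is therefore of the form $V_k(\mu)$ for suitable $k\>1$ and $\mu\in\hat G$. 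This proves the first assertion.

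For the count I would use that for a nilpotent-type module $V_k(\mu)$ the operator $x$ has a one-dimensional kernel, spanned by the top vector $m_{k-1}$. Writing $M\cong\oplus_r V_{k_r}(\mu_r)$ and noting that $x$ preserves each summand, we get $\dim_k\ker(x|_M)=\sum_r 1$, so the number of indecomposable summands equals $\dim_k\ker(x|_M)$, and it remains to show this equals $\min\{n,t\}$. To compute the kernel I would grade $M$ by the total degree $d=i+j$ of the basis vectors; the formula for $x$ shows that $x$ raises $d$ by one. Expanding $w=\sum c_{ij}\,m_i\ot v_j$ and setting $xw=0$ produces three families of scalar equations: the boundary relations force $c_{i,0}=0$ for $0\<i\<n-2$ and $c_{0,j}=0$ for $0\<j\<t-2$, while the interior relations $c_{a,b-1}=-\gamma_b\,c_{a-1,b}$ link, with nonzero ratios, the consecutive coefficients lying on a fixed anti-diagonal $i+j=\d$. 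Thus each anti-diagonal carries at most a one-dimensional family of solutions, and it collapses to zero exactly when it meets one of the forced-zero boundary coefficients, that is, precisely for $0\<\d\<\max\{n,t\}-2$. The surviving anti-diagonals are those with $\max\{n,t\}-1\<\d\<n+t-2$, and on each of these the chain of interior relations leaves exactly one free parameter. Counting yields $\dim_k\ker(x|_M)=(n+t-1)-(\max\{n,t\}-1)=\min\{n,t\}$.

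The nilpotency estimate and the reduction to $\dim_k\ker(x|_M)$ are routine. The main obstacle is the kernel computation: one must verify carefully that the interior recurrence genuinely connects all coefficients on each anti-diagonal, so that a single vanishing coefficient annihilates the whole anti-diagonal, and conversely that the surviving anti-diagonals meet none of the forced-zero boundary positions, so that each contributes exactly one dimension. Once this bookkeeping is handled correctly, the value $\min\{n,t\}$ follows.
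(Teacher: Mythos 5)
Your proof is correct and follows essentially the same route as the paper: the paper also grades $M$ by the anti-diagonals $M_l={\rm span}\{m_i\ot v_j\mid i+j=l\}$, observes that $x$ raises this degree by one (hence acts nilpotently, so every summand is of nilpotent type and $\dim\ker(x|_M)$ counts the summands), and then computes $\ker(x)\cap M_l$ to be zero for small $l$ and one-dimensional for the top $\min\{n,t\}$ values of $l$. Your packaging of that kernel computation into boundary relations plus a chained interior recursion along each anti-diagonal is just a tidier presentation of the paper's case-by-case recursions.
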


\begin{proof}
We only consider the case of $n\<t$ since the proof is similar for $n>t$.
Let $M=V_n(\l)\ot V_t(\s)$. Let $\{m_0, m_1, \cdots, m_{n-1}\}$ and $\{v_0, v_1, \cdots, v_{t-1}\}$
be the standard bases of $V_n(\l)$ and $V_t(\s)$ as given in the last section, respectively.
Then $\{m_i\ot v_j|0\<i\<n-1, 0\<j\<t-1\}$ is a basis of $M$.
For any $0\<l\<n+t-2$, let $M_l={\rm span}\{m_i\ot v_j|0\<i\<n-1, 0\<j\<t-1, i+j=l\}$.
Then $M=\oplus_{l=0}^{n+t-2}M_l$ as vector spaces. Moreover,
$${\rm dim}(M_l)=\left\{\begin{array}{ll}
l+1, & \text{ if } 0\<l\<n-2,\\
n, & \text{ if } n-1\<l\<t-1,\\
n+t-1-l, & \text{ if } t\<l\<n+t-2.\\
\end{array}\right.$$
Define a linear map $\phi: M\ra M$ by $\phi(m)=xm$, $m\in M$.
It is easy to see that $xM_l\subseteq M_{l+1}$ for all $0\<l\<n+t-2$, where $M_{n+t-1}=0$.
Hence ${\rm Ker}(\phi)=\oplus_{l=0}^{n+t-2}{\rm Ker}(\phi)\cap M_l$ and $x^{n+t-1}M=0$.
It follows that any indecomposable summand of $M$ is of nilpotent type,
and the number of summands in the decomposition of $M$ into the direct sum of indecomposable modules
is equal to dim(Ker$(\phi))$.

Obviously, ${\rm Ker}(\phi)\cap M_0=0$.
Let $1\<l\<n-2$ and $m\in M_l$. Then $m=\sum_{i=0}^l\a_i m_i\ot v_{l-1}$ for some $\a_i\in k$,
and hence
$$\begin{array}{rl}
xm=&\sum_{i=0}^l\a_i(xm_i\ot av_{l-i}+m_i\ot xv_{l-i})\\
=&\sum_{i=0}^l\a_i(m_{i+1}\ot\chi^{l-i}(a)\s(a)v_{l-i}+m_i\ot v_{l-i+1})\\
=&\sum_{i=1}^{l+1}\a_{i-1}q^{i-l-1}\s(a) m_i\ot v_{l+1-i}+\sum_{i=0}^l\a_im_i\ot v_{l+1-i}\\
=&\a_0m_0\ot v_{l+1}+\sum_{i=1}^l(\a_{i-1}q^{i-l-1}\s(a)+\a_i)m_i\ot v_{l+1-i}
+\a_l\s(a)m_{l+1}\ot v_0.\\
\end{array}$$
It follows that $m\in{\rm Ker}(\phi)\Leftrightarrow\a_i=0, \forall 0\<i\<l\Leftrightarrow m=0$.
Consequently, ${\rm Ker}(\phi)\cap M_l=0$ for all $1\<l\<n-2$.
Similarly, one can show that ${\rm Ker}(\phi)\cap M_l=0$ for all $n-1\<l\<t-2$.

Clearly, $M_{n+t-2}\subseteq{\rm Ker}(\phi)$, and so ${\rm dim}({\rm Ker}(\phi)\cap M_{n+t-2})=1$.
It is easy to check that ${\rm dim}({\rm Ker}(\phi)\cap M_{n+t-3})=1$.
Now let $t-1\<l\<n+t-4$ and $m\in M_l$. Then $m=\sum_{i=l+1-t}^{n-1}\a_im_i\ot v_{l-i}$
for some $\a_i\in k$, and hence
$$\begin{array}{rl}
xm=&\sum_{i=l+1-t}^{n-1}\a_i(xm_i\ot av_{l-i}+m_i\ot xv_{l-i})\\
=&\sum_{i=l+1-t}^{n-2}\a_iq^{i-l}\s(a)m_{i+1}\ot v_{l-i}+\sum_{i=l+2-t}^{n-1}\a_im_i\ot v_{l-i+1}\\
=&\sum_{i=l+2-t}^{n-1}(\a_{i-1}q^{i-l-1}\s(a)+\a_i)m_i\ot v_{l+1-i}.\\
\end{array}$$
It follows that $m\in{\rm Ker}(\phi)\Leftrightarrow\a_{i-1}q^{i-l-1}\s(a)+\a_i=0$, $\forall l+2-t\<i\<n-1$,
which implies that ${\rm dim}({\rm Ker}(\phi)\cap M_l)=1$.

Thus, we have shown that ${\rm Ker}(\phi)=\oplus_{l=t-1}^{n+t-2}{\rm Ker}(\phi)\cap M_l$
and ${\rm dim}({\rm Ker}(\phi)\cap M_l)=1$ for all $t-1\<l\<n+t-2$.
Hence ${\rm dim}({\rm Ker}(\phi))=n$, completing the proof.
\end{proof}

\begin{lemma}\lelabel{3.10}
Let $\l, \s\in\hat{G}$ and $t\in\mathbb Z$ with $t\>1$.\\
{\rm (1)} If $s\nmid t$, then
$V_2(\l)\ot V_t(\s)\cong V_t(\s)\ot V_2(\l)\cong V_{t+1}(\s\l)\oplus V_{t-1}(\chi\s\l)$.\\
{\rm (2)} If $s|t$, then
$V_2(\l)\ot V_t(\s)\cong V_t(\s)\ot V_2(\l)\cong V_t(\s\l)\oplus V_t(\chi\s\l)$.
\end{lemma}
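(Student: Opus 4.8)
The plan is to exhibit $V_2(\l)\ot V_t(\s)$ as an internal direct sum of two cyclic submodules, generated by explicit weight vectors whose socles lie in distinct weight spaces; the disjointness of the socles forces the sum to be direct, and a dimension count then gives the isomorphism. First I would dispose of $t=1$ using \leref{3.8}, which yields $V_2(\l)\ot V_1(\s)\cong V_2(\l\s)$; this agrees with the stated formula since $s\>2$ (so $s\nmid 1$) and $V_0(\chi\l\s)=0$. So assume $t\>2$. Writing $\{m_0,m_1\}$ and $\{v_0,\dots,v_{t-1}\}$ for the standard bases of $V_2(\l)$ and $V_t(\s)$, each $m_i\ot v_j$ is a weight vector of weight $\chi^{i+j}\l\s$, and the whole computation rests on the coproduct formula $\D(x^k)=\sum_{i=0}^k\binom{k}{i}_qx^i\ot a^ix^{k-i}$ together with the key arithmetic fact that, as $q=\chi^{-1}(a)$ is a primitive $s^{th}$ root of unity, $(k)_q=0$ if and only if $s\mid k$.

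The first submodule is $P=\langle m_0\ot v_0\rangle$. Since $x^im_0=0$ for $i\>2$ and $x^kv_0=0$ for $k\>t$, only the $i=0,1$ terms of $\D(x^k)$ contribute, and one computes $x^k(m_0\ot v_0)=m_0\ot v_k+(k)_q\chi^{k-1}(a)\s(a)\,m_1\ot v_{k-1}$ for $k\<t-1$, together with $x^t(m_0\ot v_0)=(t)_q\chi^{t-1}(a)\s(a)\,m_1\ot v_{t-1}$ and $x^{t+1}(m_0\ot v_0)=0$. Thus the length of the $x$-chain on the weight vector $m_0\ot v_0$ is decided by whether $(t)_q$ vanishes: when $s\nmid t$ one has $x^t(m_0\ot v_0)\neq0$, so $P\cong V_{t+1}(\l\s)$; when $s\mid t$ this top term dies but the component $m_0\ot v_{t-1}$ of $x^{t-1}(m_0\ot v_0)$ is still nonzero, so $P\cong V_t(\l\s)$. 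In either case $P$ is uniserial with one-dimensional socle, of weight $\chi^t\l\s$ in the first case and $\chi^{t-1}\l\s$ in the second.

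For the complementary summand I would choose a generator of weight $\chi\l\s$, the two candidates being $m_0\ot v_1$ and $m_1\ot v_0$. In case $s\nmid t$ I set $u=m_0\ot v_1-(t-1)_q\chi^{t-1}(a)\s(a)\,m_1\ot v_0$; the same type of computation gives $x^{t-1}u=0$ while $x^{t-2}u\neq0$ (its $m_0\ot v_{t-1}$ component survives), so $\langle u\rangle\cong V_{t-1}(\chi\l\s)$, whose socle has weight $\chi^{t-1}\l\s$. In case $s\mid t$ I take $m_1\ot v_0$, for which $x^{t-1}(m_1\ot v_0)=m_1\ot v_{t-1}\neq0$ and $x^t(m_1\ot v_0)=0$, so $\langle m_1\ot v_0\rangle\cong V_t(\chi\l\s)$, whose socle has weight $\chi^t\l\s=\l\s$. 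In both cases the socle of this second cyclic module lies in a weight space different from that of $\mathrm{soc}(P)$: for $s\nmid t$ the weights $\chi^{t-1}\l\s$ and $\chi^t\l\s$ differ because $\chi\neq\e$ (as $s\>2$), and for $s\mid t$ the weights $\l\s$ and $\chi^{t-1}\l\s$ differ because $s\nmid t-1$ (as $s\mid t$ and $s\>2$). Since every nonzero submodule has nonzero socle, $\mathrm{soc}(P)\cap\mathrm{soc}(\langle u\rangle)=0$ forces $P\cap\langle u\rangle=0$, so $P+\langle u\rangle$ is direct; comparing $\dim(P\oplus\langle u\rangle)=2t$ with $\dim\bigl(V_2(\l)\ot V_t(\s)\bigr)=2t$ yields exactly the claimed decomposition in each case.

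The isomorphism for the reversed order $V_t(\s)\ot V_2(\l)$ is obtained by the identical argument, now starting from $v_0\ot m_0$ and a suitable weight vector of weight $\chi\s\l$ formed from $v_1\ot m_0$ and $v_0\ot m_1$; since $\hat{G}$ is abelian we have $\s\l=\l\s$, and the computations differ only by harmless scalar factors coming from the $a$-action, so they produce the same two indecomposables. The routine part of all this is the repeated evaluation of $x^k$ through the $q$-binomial coproduct, and the one genuinely load-bearing step is the separation of the two socles into distinct weight spaces, which is precisely what lets me conclude the internal direct sum with no further Krull--Schmidt input. As an independent consistency check, \leref{3.9} predicts exactly $\min\{2,t\}=2$ indecomposable summands, all of nilpotent type, matching the two cyclic modules constructed above.
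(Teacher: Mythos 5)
Your proposal is correct and follows essentially the same route as the paper: split off $\langle m_0\ot v_0\rangle$ (of dimension $t+1$ or $t$ according as $(t)_q\neq 0$ or $=0$), complement it by a cyclic submodule generated in weight $\chi\l\s$, separate the two socles by their weights, and finish with a dimension count. The only difference is cosmetic: you write the second generator down explicitly ($m_0\ot v_1-(t-1)_q\chi^{t-1}(a)\s(a)\,m_1\ot v_0$, resp.\ $m_1\ot v_0$), whereas the paper produces it via the kernel analysis of \leref{3.9}.
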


\begin{proof}
We only consider $V_2(\l)\ot V_t(\s)$ since the proof is similar for $V_t(\s)\ot V_2(\l)$.

If $t=1$, it follows from \leref{3.8}. Now assume that $t\>2$ and let $M=V_2(\l)\ot V_t(\s)$.
We use the notations in \leref{3.9} and it proof.
Then ${\rm dim(Ker}(\phi))=2$ and
${\rm Ker}(\phi)=({\rm Ker}(\phi)\cap M_{t-1})\oplus M_t$.

(1) Assume that $s\nmid t$. Then $t=ns+r$ for some integers $n$ and $r$ with $n\>0$ and $1\<r\<s-1$.
We claim that $x^t(m_0\ot v_0)\neq 0$. In fact, if $t<s$ then
$$\begin{array}{c}
x^{t}(m_0\ot v_0)=\sum_{i=0}^t\binom{t}{i}_qx^im_0\ot a^ix^{t-i}v_0
=\binom{t}{1}_qq^{1-t}\s(a)m_1\ot v_{t-1}\neq 0.\\
\end{array}$$
If $t>s$, then $n\>1$ and
$\D(x^{ns})=(x^s\ot a^s+1\ot x^s)^n=\sum_{i=0}^n\binom{n}{i}x^{si}\ot a^{si}x^{s(n-i)}$.
Hence we have
$x^{ns}(m_0\ot v_0)=\sum_{i=0}^n\binom{n}{i}x^{si}m_0\ot a^{si}x^{s(n-i)}v_0=m_0\ot v_{sn}$,
and so
$$\begin{array}{c}
x^{t}(m_0\ot v_0)=x^r(m_0\ot v_{sn})=\sum_{i=0}^r\binom{r}{i}_qx^im_0\ot a^ix^{r-i}v_{sn}
=\binom{r}{1}_qq^{1-t}\s(a)m_1\ot v_{t-1}\neq 0.\\
\end{array}$$
Thus, we have shown the claim $x^t(m_0\ot v_0)\neq 0$. Since $x^t(m_0\ot v_0)\in M_t$,
$x^{t+1}(m_0\ot v_0)=0$. It follows that the submodule $\langle m_0\ot v_0\rangle$
of $M$ is isomorphic to $V_{t+1}(\s\l)$.
Note that $x^{t-1}(m_0\ot v_0)\in M_{t-1}$. By $x^t(m_0\ot v_0)\neq 0$, one knows that
$x^{t-1}(m_0\ot v_0)\notin{\rm Ker}(\phi)$.
Let $0\neq v\in{\rm Ker}(\phi)\cap M_{t-1}$.
Then $\{v, x^{t-1}(m_0\ot v_0)\}$ is a basis of $M_{t-1}$. By the proof of \leref{3.9},
the restricted linear map $\phi^{t-2}|_{M_1}: M_1\ra M_{t-1}, m\mapsto x^{t-2}m$ is bijective.
Hence there exists an element $u\in M_1$ such that $x^{t-2}u=v$. Since $x^{t-1}u=xv=0$,
the submodule $\langle u\rangle$ of $M$ is isomorphic to $V_{t-1}(\chi\s\l)$.
It is easy to see that the sum $\langle m_0\ot v_0\rangle+\langle u\rangle$ is direct.
By comparing the dimensions of these modules, one gets that
$M=\langle m_0\ot v_0\rangle\oplus\langle u\rangle\cong V_{t+1}(\s\l)\oplus V_{t-1}(\chi\s\l)$.

(2) Assume that $s|t$. Then $t=ns$ for some integer $n\>1$.
By a similar computation as before, we have
$x^t(m_0\ot v_0)=x^{ns}(m_0\ot v_0)=\sum_{i=0}^n\binom{n}{i}x^{si}m_0\ot a^{si}x^{s(n-i)}v_0=0$
and
$$\begin{array}{rl}
x^{t-1}(m_0\ot v_0)=&x^{ns-1}(m_0\ot v_0)=x^{s-1}(x^{(n-1)s}(m_0\ot v_0))\\
=&x^{s-1}(\sum_{i=0}^{n-1}\binom{n-1}{i}x^{si}m_0\ot a^{si}x^{s(n-1-i)}v_0)\\
=&x^{s-1}(m_0\ot v_{s(n-1)})\\
=&\sum_{i=0}^{s-1}\binom{s-1}{i}_qx^im_0\ot a^ix^{s-1-i}v_{s(n-1)}\\
=&m_0\ot v_{t-1}+\binom{s-1}{1}_qq^{2-t}\s(a)m_1\ot v_{t-2}\neq 0.
\end{array}$$
Hence the submodule $\langle m_0\ot v_0\rangle$ of $M$ is isomorphic to
$V_t(\s\l)$. By the proof of \leref{3.9},
the restricted linear map $\phi^{t-1}|_{M_1}: M_1\ra M_t, m\mapsto x^{t-1}m$ is surjective.
Hence there exists an element $m\in M_1$ such that $x^{t-1}m=m_1\ot v_{t-1}\neq 0$.
Since $x^tm=x(m_1\ot v_{t-1})=0$, the submodule $\langle m\rangle$ of $M$
is isomorphic to $V_t(\chi\s\l)$. Clearly, the sum $\langle m_0\ot v_0\rangle+\langle m\rangle$
is direct, which implies that
$M=\langle m_0\ot v_0\rangle\oplus\langle m\rangle\cong V_t(\s\l)\oplus V_t(\chi\s\l)$.
\end{proof}

\begin{lemma}\lelabel{3.11}
Let $\l, \s\in\hat{G}$ and $n, t\in\mathbb Z$ with $1\<n\<s$ and $t\>1$.\\
$(1)$ Assume that $s|t$. Then $V_n(\l)\ot V_t(\s)\cong V_t(\s)\ot V_n(\l)
\cong\oplus_{i=0}^{n-1}V_t(\chi^i\l\s)$.\\
$(2)$ Assume that $s\nmid t$ and let $t=rs+l$ with $1\<l\<s-1$. Then\\
\mbox{\hspace{0.4cm}\rm (a)} If $n+l\<s+1$, then
$$\begin{array}{c}V_n(\l)\ot V_t(\s)\cong V_t(\s)\ot V_n(\l)
\cong(\oplus_{i=0}^{{\rm min}\{n,l\}-1}V_{n+t-1-2i}(\chi^i\l\s))
\oplus(\oplus_{l\<i\<n-1}V_{rs}(\chi^i\l\s)).\\
\end{array}$$
\mbox{\hspace{0.4cm}\rm (b)} If $n+l\>s+1$ and $m=n+l-s-1$, then
$$\begin{array}{rl}
&V_n(\l)\ot V_t(\s)\cong V_t(\s)\ot V_n(\l)\\
\cong&(\oplus_{i=0}^mV_{rs+s}(\chi^i\l\s))
\oplus(\oplus_{m+1\<i\<{\rm min}\{n,l\}-1}V_{n+t-1-2i}(\chi^i\l\s))
\oplus(\oplus_{l\<i\<n-1}V_{rs}(\chi^i\l\s)).\\
\end{array}$$
\end{lemma}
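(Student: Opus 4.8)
The plan is to argue by induction on $n$ with $1\le n\le s$, treating $t$ as a parameter and using \leref{3.10} (the case $n=2$) as the engine of the recursion, in exactly the spirit of the Krull--Schmidt cancellation against a fixed ``splitter'' module that drove \thref{3.6} and \thref{3.7}. The base cases $n=1$ and $n=2$ are \leref{3.8} and \leref{3.10}. For the inductive step $3\le n\le s$ I observe that $2\le n-1\le s-1$, so $s\nmid(n-1)$, and hence \leref{3.10}(1) applied with $\s=\e$ and $t=n-1$ gives
$$V_2(\l)\ot V_{n-1}(\e)\cong V_n(\l)\op V_{n-2}(\chi\l).$$
This is the nilpotent-type analogue of the splitting of $V_{2s}(\e)\ot(-)$ used before; it lets me realize $V_n(\l)$ as the ``new'' summand produced by tensoring a smaller module with $V_2(\l)$.

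Next I would tensor this isomorphism on the right by $V_t(\s)$ and use associativity of $\ot$, obtaining
$$V_2(\l)\ot\big(V_{n-1}(\e)\ot V_t(\s)\big)\cong\big(V_n(\l)\ot V_t(\s)\big)\op\big(V_{n-2}(\chi\l)\ot V_t(\s)\big).$$
On the left, the inner product $V_{n-1}(\e)\ot V_t(\s)$ is known by the induction hypothesis (since $1\le n-1<n$); it is a direct sum of nilpotent-type modules $V_d(\chi^i\s)$, so \leref{3.10} applies to each factor $V_2(\l)\ot V_d(\chi^i\s)$ (part (1) when $s\nmid d$, part (2) when $s\mid d$), rewriting the whole left side as an explicit direct sum of modules $V_{d\pm1}(\chi^i\l\s)$ and $V_d(\chi^i\l\s)$. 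On the right, the summand $V_{n-2}(\chi\l)\ot V_t(\s)$ is again given by the induction hypothesis. Since all modules are finite dimensional, the Krull--Schmidt theorem lets me cancel $V_{n-2}(\chi\l)\ot V_t(\s)$ from both sides and read off $V_n(\l)\ot V_t(\s)$. The commutativity claim $V_n(\l)\ot V_t(\s)\cong V_t(\s)\ot V_n(\l)$ is carried along by running the identical recursion with the order of the two tensor factors reversed, using that \leref{3.10} already records both orders and that the base cases \leref{3.8}, \leref{3.10} are order-symmetric.

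The conceptual content is thus light; the real work, and the main obstacle, is the bookkeeping required to reconcile the sum produced by the recursion with the three stated shapes (1), (2a), (2b). The delicate point is that whether part (1) or part (2) of \leref{3.10} applies to a given summand $V_d(\chi^i\s)$ depends on the divisibility $s\mid d$, and the dimensions $d$ occurring in the inductive formula — the values $n-1+t-1-2i$ and $rs$ — have their divisibility by $s$ controlled by the residue $l$ of $t$ modulo $s$ and by the index $i$. This is precisely what forces the split into $s\mid t$ and $s\nmid t$, and, within the latter, the threshold $n+l\le s+1$ versus $n+l\ge s+1$ together with the cut-off $m=n+l-s-1$ and the various $\min$'s and index ranges. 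I would therefore organize the verification into the three cases $s\mid t$; $\,s\nmid t$ with $n+l\le s+1$; and $s\nmid t$ with $n+l>s+1$, in each checking the telescoping of the $V_{d\pm1}$ contributions, invoking the convention $V_0=0$ to absorb the boundary $rs=0$ terms, and finally cross-checking against \leref{3.9} that the number of indecomposable summands is $\min\{n,t\}$ and that the dimensions sum to $nts$.
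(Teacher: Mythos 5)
Your proposal follows essentially the same route as the paper: induction on $n$ with base cases from \leref{3.8} and \leref{3.10}, computing the triple product $V_2\ot V_{n-1}\ot V_t$ two ways via associativity and the splitting $V_2\ot V_{n-1}\cong V_n\oplus V_{n-2}$, then cancelling with Krull--Schmidt; the paper merely organizes the bookkeeping for case (2b) as an induction on $m=n+l-s-1$, which amounts to the same recursion in $n$. The only slip is the final sanity check: $\dim\bigl(V_n(\l)\ot V_t(\s)\bigr)=nt$, not $nts$, since both factors here are of nilpotent type.
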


\begin{proof}
(1) We only prove $V_n(\l)\ot V_t(\s)\cong\oplus_{i=0}^{n-1}V_t(\chi^i\l\s)$ by induction
on $n$ since the proof is similar for $V_t(\s)\ot V_n(\l)\cong\oplus_{i=0}^{n-1}V_t(\chi^i\l\s)$.

For $n=1$ and 2, it follows from \leref{3.8} and \leref{3.10}(2), respectively. Now let $2\<n<s$
and assume that $V_n(\l)\ot V_t(\s)\cong\oplus_{i=0}^{n-1}V_t(\chi^i\l\s)$
and $V_{n-1}(\l)\ot V_t(\s)\cong\oplus_{i=0}^{n-2}V_t(\chi^i\l\s)$ for any $\l,\s\in\hat{G}$.
Then by \leref{3.10} and the induction hypotheses, we have
$$\begin{array}{rl}
V_2(\e)\ot V_n(\l)\ot V_t(\s)\cong&\oplus_{i=0}^{n-1}V_2(\e)\ot V_t(\chi^i\l\s)\\
\cong&\oplus_{i=0}^{n-1}(V_t(\chi^i\l\s)\oplus V_t(\chi^{i+1}\l\s))\\
\cong&(\oplus_{i=0}^{n-1}V_t(\chi^i\l\s))\oplus(\oplus_{i=1}^{n}V_t(\chi^i\l\s))\\
\end{array}$$
and
$$\begin{array}{rl}
V_2(\e)\ot V_n(\l)\ot V_t(\s)\cong&(V_{n+1}(\l)\oplus V_{n-1}(\chi\l))\ot V_t(\s)\\
\cong&V_{n+1}(\l)\ot V_t(\s)\oplus V_{n-1}(\chi\l)\ot V_t(\s)\\
\cong&V_{n+1}(\l)\ot V_t(\s)\oplus(\oplus_{i=0}^{n-2}V_t(\chi^{i+1}\l\s))\\
\cong&V_{n+1}(\l)\ot V_t(\s)\oplus(\oplus_{i=1}^{n-1}V_t(\chi^i\l\s)).\\
\end{array}$$
Thus, it follows from Krull-Schmidt Theorem that
$V_{n+1}(\l)\ot V_t(\s)\cong\oplus_{i=0}^nV_t(\chi^i\l\s)$.

(2) We only consider $V_n(\l)\ot V_t(\s)$ since the proof for $V_t(\s)\ot V_n(\l)$
is similar.

(a) Assume that $n+l\<s+1$. We work by induction on $n$.
For $n=1$ and 2, it follows from \leref{3.8} and \leref{3.10}(1).
Now let $n>2$.

If $n\<l+1$, then $n-2<n-1\<l$. Hence by \leref{3.10} and the induction hypothesis, we have
$$\begin{array}{rl}
V_2(\e)\ot V_{n-1}(\l)\ot V_t(\s)
\cong&\oplus_{i=0}^{n-2}V_2(\e)\ot V_{n+t-2-2i}(\chi^i\l\s)\\
\cong&\oplus_{i=0}^{n-2}(V_{n+t-1-2i}(\chi^i\l\s)
\oplus V_{n+t-3-2i}(\chi^{i+1}\l\s))\\
\cong&(\oplus_{i=0}^{n-2}V_{n+t-1-2i}(\chi^i\l\s))
\oplus(\oplus_{i=1}^{n-1} V_{n+t-1-2i}(\chi^i\l\s))\\
\end{array}$$
and
$$\begin{array}{rl}
V_2(\e)\ot V_{n-1}(\l)\ot V_t(\s)
\cong&V_n(\l)\ot V_t(\s)\oplus V_{n-2}(\chi\l)\ot V_t(\s)\\
\cong&V_n(\l)\ot V_t(\s)\oplus(\oplus_{i=0}^{n-3}V_{n+t-3-2i}(\chi^{i+1}\l\s))\\
\cong&V_n(\l)\ot V_t(\s)\oplus(\oplus_{i=1}^{n-2}V_{n+t-1-2i}(\chi^i\l\s)).\\
\end{array}$$
Then it follows from Krull-Schmidt Theorem that
$V_n(\l)\ot V_t(\s)\cong\oplus_{i=0}^{n-1}V_{n+t-1-2i}(\chi^i\l\s))$.
Note that when $n=l+1$, the above isomorphism becomes
$$\begin{array}{c}V_{l+1}(\l)\ot V_t(\s)\cong\oplus_{i=0}^{l}V_{n+t-1-2i}(\chi^i\l\s)
\cong(\oplus_{i=0}^{l-1}V_{n+t-1-2i}(\chi^i\l\s))\oplus V_{rs}(\chi^l\l\s).\\
\end{array}$$

If $n\>l+2$, then $n-1>n-2\>l$. Hence by \leref{3.10} and the induction hypothesis, we have
$$\begin{array}{rl}
&V_2(\e)\ot V_{n-1}(\l)\ot V_t(\s)\\
\cong&(\oplus_{i=0}^{l-1}V_2(\e)\ot V_{n+t-2-2i}(\chi^i\l\s))
\oplus(\oplus_{i=l}^{n-2}V_2(\e)\ot V_{rs}(\chi^i\l\s))\\
\cong&(\oplus_{i=0}^{l-1}(V_{n+t-1-2i}(\chi^i\l\s)
\oplus V_{n+t-3-2i}(\chi^{i+1}\l\s)))
\oplus(\oplus_{i=l}^{n-2}(V_{rs}(\chi^i\l\s)
\oplus V_{rs}(\chi^{i+1}\l\s)))\\
\cong&(\oplus_{i=0}^{l-1}V_{n+t-1-2i}(\chi^i\l\s))
\oplus(\oplus_{i=1}^{l} V_{n+t-1-2i}(\chi^i\l\s))
\oplus(\oplus_{i=l}^{n-2}V_{rs}(\chi^i\l\s))
\oplus(\oplus_{i=l+1}^{n-1}V_{rs}(\chi^i\l\s))\\
\end{array}$$
and
$$\begin{array}{rl}
&V_2(\e)\ot V_{n-1}(\l)\ot V_t(\s)\\
\cong&V_n(\l)\ot V_t(\s)\oplus V_{n-2}(\chi\l)\ot V_t(\s)\\
\cong&V_n(\l)\ot V_t(\s)\oplus(\oplus_{i=0}^{l-1}V_{n+t-3-2i}(\chi^{i+1}\l\s))
\oplus(\oplus_{l\<i\<n-3}V_{rs}(\chi^{i+1}\l\s))\\
\cong&V_n(\l)\ot V_t(\s)\oplus(\oplus_{i=1}^{l}V_{n+t-1-2i}(\chi^{i}\l\s))
\oplus(\oplus_{l+1\<i\<n-2}V_{rs}(\chi^{i}\l\s)).\\
\end{array}$$
Thus, it follows from Krull-Schmidt Theorem that
$V_n(\l)\ot V_t(\s)\cong(\oplus_{i=0}^{l-1}V_{n+t-1-2i}(\chi^{i}\l\s))
\oplus(\oplus_{i=l}^{n-1}V_{rs}(\chi^{i}\l\s))$.

(b) Assume that $n+l\>s+1$, and let $m=n+l-s-1$. Then $0\<m\<s-2$, $n=s-l+m+1\>m+2$ and $l=s-n+m+1\>m+1$.
We work by induction on $m$, and we only do for $m=0$ and $m=1$ since the proof for the induction step is similar
that for $m=1$. If $m=0$, then it follows from (a). Now let $m=1$.
Then $n-2+l<n-1+l=s+1$. If $n\<l+1$, then by (a) and \leref{3.10}, we have
$$\begin{array}{rl}
&V_2(\e)\ot V_{n-1}(\l)\ot V_t(\s)\\
\cong&V_2(\e)\ot V_{rs+s}(\l\s)
\oplus(\oplus_{i=1}^{n-2}V_2(\e)\ot V_{n+t-2-2i}(\chi^i\l\s))\\
\cong&V_{rs+s}(\l\s)\oplus V_{rs+s}(\chi\l\s)
\oplus(\oplus_{i=1}^{n-2}(V_{n+t-1-2i}(\chi^i\l\s)
\oplus V_{n+t-3-2i}(\chi^{i+1}\l\s)))\\
\cong&V_{rs+s}(\l\s)\oplus V_{rs+s}(\chi\l\s)
\oplus(\oplus_{i=1}^{n-2}V_{n+t-1-2i}(\chi^i\l\s))
\oplus(\oplus_{i=2}^{n-1} V_{n+t-1-2i}(\chi^i\l\s))\\
\end{array}$$
and
$$\begin{array}{rl}
V_2(\e)\ot V_{n-1}(\l)\ot V_t(\s)
\cong&V_n(\l)\ot V_t(\s)\oplus V_{n-2}(\chi\l)\ot V_t(\s)\\
\cong&V_n(\l)\ot V_t(\s)\oplus(\oplus_{i=0}^{n-3}V_{n+t-3-2i}(\chi^{i+1}\l\s))\\
\cong&V_n(\l)\ot V_t(\s)\oplus(\oplus_{i=1}^{n-2}V_{n+t-1-2i}(\chi^i\l\s)).\\
\end{array}$$
It follows from Krull-Schmidt Theorem that
$$V_n(\l)\ot V_t(\s)\cong(\oplus_{i=0}^1 V_{rs+s}(\chi^i\l\s))
\oplus(\oplus_{i=2}^{n-1}V_{n+t-1-2i}(\chi^i\l\s)).$$
Note that when $n=l+1$, the above isomorphism becomes
$$V_{l+1}(\l)\ot V_t(\s)\cong(\oplus_{i=0}^1 V_{rs+s}(\chi^i\l\s))
\oplus(\oplus_{2\<i\<l-1}V_{l+t-2i}(\chi^i\l\s))
\oplus V_{rs}(\chi^l\l\s).$$
If $n\>l+2$, then $n-1>n-2\>l$. A similar argument as above shows that
$$\begin{array}{rl}
&V_2(\e)\ot V_{n-1}(\l)\ot V_t(\s)\\
\cong&(\oplus_{i=0}^1V_{rs+s}(\chi^i\l\s))
\oplus(\oplus_{i=1}^{l-1}V_{n+t-1-2i}(\chi^i\l\s))
\oplus(\oplus_{i=2}^lV_{n+t-1-2i}(\chi^i\l\s))\\
&\oplus(\oplus_{i=l}^{n-2}V_{rs}(\chi^i\l\s))
\oplus(\oplus_{i=l+1}^{n-1}V_{rs}(\chi^i\l\s))\\
\end{array}$$
and
$$\begin{array}{rl}
&V_2(\e)\ot V_{n-1}(\l)\ot V_t(\s)\\
\cong&V_n(\l)\ot V_t(\s)\oplus(\oplus_{i=1}^{l}V_{n+t-1-2i}(\chi^{i}\l\s))
\oplus(\oplus_{l+1\<i\<n-2}V_{rs}(\chi^{i}\l\s)).\\
\end{array}$$
Thus, it follows from Krull-Schmidt Theorem that
$$V_n(\l)\ot V_t(\s)\cong(\oplus_{i=0}^1V_{rs+s}(\chi^i\l\s))
\oplus(\oplus_{2\<i\<l-1}V_{n+t-1-2i}(\chi^{i}\l\s))
\oplus(\oplus_{i=l}^{n-1}V_{rs}(\chi^{i}\l\s)).$$
\end{proof}

\begin{lemma}\lelabel{3.12}
Let $\l, \s\in\hat{G}$ and $t\in\mathbb Z$ with $t\>1$. \\
{\rm (1)} If $s|t$, then $V_{s+1}(\l)\ot V_t(\s)\cong V_t(\s)\ot V_{s+1}(\l)
\cong V_{t-s}(\l\s)\oplus V_{t+s}(\l\s)\oplus(\oplus_{i=1}^{s-1}V_t(\chi^i\l\s))$.\\
$(2)$ Assume that $s\nmid t$ and let $t=rs+l$ with $1\<l\<s-1$. Then\\
\mbox{\hspace{0.4cm}\rm (a)} If $r=0$, then
$V_{s+1}(\l)\ot V_t(\s)\cong V_t(\s)\ot V_{s+1}(\l)
\cong V_{s+l}(\l\s)\oplus(\oplus_{1\leq i\leq l-1}V_s(\chi^i\l\s)).$
\mbox{\hspace{0.4cm}\rm (b)} If $r\>1$, then
$$\begin{array}{rl}
&V_{s+1}(\l)\ot V_t(\s)\cong V_t(\s)\ot V_{s+1}(\l)\\
\cong&V_{t+s}(\l\s)\oplus(\oplus_{1\leq i\leq l-1}V_{(r+1)s}(\chi^i\l\s))\oplus V_{t+s-2l}(\chi^l\l\s)\oplus(\oplus_{l+1\<i\<s-1}V_{rs}(\chi^i\l\s))\oplus V_{t-s}(\l\s).\\
\end{array}$$
\end{lemma}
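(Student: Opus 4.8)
The plan is to reduce as much as possible to \leref{3.11} and to treat only the genuinely new cases by a direct analysis of the operator $x$, in the spirit of \leref{3.9} and \leref{3.10}. First I would dispose of $t=1$ by \leref{3.8}, and I would observe that whenever one of the two tensor factors has length at most $s$ the result is already contained in \leref{3.11}: by the commutativity built into that lemma, $V_{s+1}(\l)\ot V_t(\s)\cong V_t(\s)\ot V_{s+1}(\l)$, and when $t<s$ (i.e. $r=0$, case (2)(a)) the first factor $V_t(\s)$ has length $t\<s$, while when $t=s$ (case (1) with $r=1$) the first factor $V_s(\s)$ also has length $\<s$. Substituting $n=t$ (resp. $n=s$), $t=s+1$ into \leref{3.11}(2)(a) then yields exactly the formulas of (2)(a) and of (1) for $r=1$ (with $V_{t-s}(\l\s)=V_0=0$ in the latter). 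Thus only case (1) with $r\>2$ and all of case (2)(b) remain, and there both factors have length $>s$, so \leref{3.11} no longer applies.

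For the remaining cases I would compute $M=V_{s+1}(\l)\ot V_t(\s)$ directly. After twisting by $V_1(\l)$ (\leref{3.8}), which merely multiplies every weight by $\l$, I may assume $\l=\e$. Writing the standard bases $\{m_i\}_{0\<i\<s}$ and $\{v_j\}_{0\<j\<t-1}$, the action reads $x(m_i\ot v_j)=\chi^j(a)\s(a)\,m_{i+1}\ot v_j+m_i\ot v_{j+1}$ (with $m_{s+1}=v_t=0$), and $m_i\ot v_j$ has weight $\chi^{i+j}\s$, so its weight class is $i+j$ modulo $s$. By \leref{3.9} every indecomposable summand is of nilpotent type and there are exactly ${\rm min}\{s+1,t\}=s+1$ of them; hence the decomposition is the Jordan form of the nilpotent operator $x$, each string of length $n$ with bottom weight $\chi^c\s$ giving a summand $V_n(\chi^c\s)$. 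The first concrete step is $\langle m_0\ot v_0\rangle\cong V_{t+s}(\s)$: expanding $x^k(m_0\ot v_0)$ by $\D(x^k)=\sum_i\binom{k}{i}_qx^i\ot a^ix^{k-i}$, at top degree $k=t+s-1$ only $i=s$ survives, with coefficient a power of $\s(a)$ times $\binom{(r+1)s+(l-1)}{s}_q$, which by the vanishing $\binom{s}{i}_q=0$ for $0<i<s$ (the $q$-Lucas theorem, already used in \leref{3.10}) equals $r+1\neq0$; so $x^{t+s-1}(m_0\ot v_0)\neq0$ while $x^{t+s}(m_0\ot v_0)=0$.

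The heart of the argument is then to locate one Jordan string in each remaining weight class. I would compute the weight-space dimensions ($\dim M_{(\chi^c\s)}=t+r+1$ for $0\<c\<l-1$ and $=t+r$ for $l\<c\<s-1$) and, for each class $c$, the maximal length of a string with bottom weight $\chi^c\s$ by evaluating the ranks of the graded pieces of $x^k$ between consecutive weight spaces, again via $\D(x^k)$ and the $q$-Lucas vanishing. This produces the summands $V_{(r+1)s}(\chi^i\s)$ for $1\<i\<l-1$, the boundary summand $V_{t+s-2l}(\chi^l\s)$ at $c=l$, the summands $V_{rs}(\chi^i\s)$ for $l+1\<i\<s-1$, and the second class-$0$ string $V_{t-s}(\s)$ (and the analogous list, with $l=0$, in case (1)). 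A short check of socle (top) weights shows that all of these have pairwise distinct socle weights with the single exception of $V_{t+s}(\s)$ and $V_{t-s}(\s)$, whose socles both lie in class $l-1$; distinctness of socle weights makes the corresponding cyclic submodules independent automatically.

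The main obstacle is exactly this coincidence: separating $V_{t+s}(\s)$ from $V_{t-s}(\s)$ and pinning the ``odd'' boundary length $t+s-2l$ at $c=l$ cannot be done by socle bookkeeping alone. Here I would argue as in \leref{3.10}, exhibiting an explicit second generator $w$ of weight $\s$ with $x^{t-s-1}w\neq0$, $x^{t-s}w=0$ and $\langle w\rangle\cap\langle m_0\ot v_0\rangle=0$ (equivalently, reading off from the ranks of $x^k$ on the class-$0$ weight space the two strings of lengths $t+s$ and $t-s$). Once all the cyclic submodules are shown independent, a dimension count finishes the proof, since the listed summands total $(s+1)t=\dim M$. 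Untwisting by $V_1(\l)$ (\leref{3.8}) restores the character $\l$, turning each bottom weight $\chi^c\s$ into $\chi^c\l\s$ and giving the stated formulas; the reversed product $V_t(\s)\ot V_{s+1}(\l)$ is treated by the identical computation, and since the modules are of nilpotent type no scalar twist intervenes, so both orders give the same weights and $V_{s+1}(\l)\ot V_t(\s)\cong V_t(\s)\ot V_{s+1}(\l)$.
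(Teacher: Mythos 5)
Your reductions of the easy cases are correct and coincide with the paper's: $t=1$ via \leref{3.8}; case (2)(a) and case (1) with $r=1$ via \leref{3.11}(2a) applied to $V_t(\s)\ot V_{s+1}(\l)$; and the identification $\langle m_0\ot v_0\rangle\cong V_{t+s}(\l\s)$ via the $q$-Lucas evaluation of $\binom{t+s-1}{s}_q$ is exactly the paper's first step in the hard cases. Your weight-space dimensions, your count of $s+1$ strings from \leref{3.9}, and your observation that all socle classes are distinct except for the pair $V_{t+s}(\l\s)$, $V_{t-s}(\l\s)$ are also correct (the independence of more than two cyclic submodules with distinct simple socles needs a short induction on the number of summands, but that is standard and the paper uses it too). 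The problem is that the entire content of the remaining cases --- case (1) with $r\>2$ and all of case (2)(b) --- is concentrated in the sentence ``This produces the summands\dots'', i.e.\ in the unperformed computation of the ranks of $x^k$ on the graded pieces $M_n={\rm span}\{m_i\ot v_j\mid i+j=n\}$. This is not a routine verification, because the obvious candidate generators give the wrong string lengths. For example, for $1\<i\<l-1$ the same $q$-Lucas computation you use for $m_0\ot v_0$ gives $x^{s+t-1-i}(m_0\ot v_i)=\binom{(r+1)s+(l-1-i)}{s}_q\,\s(a)^s\,m_s\ot v_{t-1}=(r+1)\s(a)^s\,m_s\ot v_{t-1}\neq 0$, so $\langle m_0\ot v_i\rangle\cong V_{s+t-i}(\chi^i\l\s)$ has length $(r+1)s+(l-i)$, not the required $(r+1)s$; the true generator of the $V_{(r+1)s}(\chi^i\l\s)$ string is a nontrivial linear combination (compare the correction term in the paper's $l=1$ generator $m_0\ot v_1-r\s(a)m_1\ot v_0$).

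So the gap is concrete: for general $l$ you would have to produce all these corrected generators, or equivalently compute ${\rm rank}(x^k|_{M_n})$ for all $k,n$, and nothing in the proposal does this or indicates how the $q$-binomial identities needed for it would be organized. The paper sidesteps exactly this point by two devices you should borrow: in case (1) it avoids finding $s-1$ of the strings explicitly by exhibiting the submodule $N={\rm span}\{m_i\ot v_j\mid 1\<i\<s\}\cong V_s(\chi\l)\ot V_t(\s)$, decomposing it by \leref{3.11}(1), and adjoining only the two explicit cyclic modules $\langle m_0\ot v_0\rangle$ and $\langle u\rangle$; in case (2)(b) it performs the explicit construction only for $l=1$ and then inducts on $l$ by tensoring with $V_2(\e)$ and applying \leref{3.10} together with the Krull--Schmidt theorem. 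Either device reduces the direct computation to a single manageable instance; as written, your plan defers the hardest part of the proof rather than carrying it out.
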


\begin{proof}
We only consider $V_{s+1}(\l)\ot V_t(\s)$ since the proof is similar for $V_t(\s)\ot V_{s+1}(\l)$.
Let $M=V_{s+1}(\l)\ot V_t(\s)$, and let $\{m_i|0\<i\<s\}$ and $\{v_j|0\<j\<t-1\}$ be the standard
bases of $V_{s+1}(\l)$ and $V_t(\s)$, respectively. Then $\{m_i\ot v_j|0\<i\<s, 0\<j\<t-1\}$ is
a basis of $M$. Let $M_n={\rm span}\{m_i\ot v_j|0\<i\<s, 0\<j\<t-1, i+j=n\}$ for $0\<n\<s+t-1$.

(1) Assume that $t=rs$. If $r=1$, then it follows from \leref{3.11}(2a).
Now let $r\>2$. By the proof of \leref{3.9}, one knows that $x^{s+t}(m_0\ot v_0)=0$. However,
a computation similar to that in the proof of \leref{3.10} shows that
$$x^{s+t-1}(m_0\ot v_0)=x^{s-1}(x^{sr}(m_0\ot v_0))
=r\s(a^s)m_s\ot v_{t-1}\neq 0.$$
It follows that $\langle m_0\ot v_0\rangle$ is isomorphic to $V_{t+s}(\l\s)$
and soc($\langle m_0\ot v_0\rangle)=M_{s+t-1}$.
Again by the proof of \leref{3.9}, there exists a nonzero element $v\in M_{t-1}$
such that $xv=0$. Moreover, the map $M_s\ra M_{t-1}$, $m\mapsto x^{t-s-1}m$
is bijective. Hence there is an element $u\in M_s$ such that $x^{t-s-1}u=v$.
It follows that $\langle u\rangle\cong V_{t-s}(\l\s)$ and
soc$(\langle u\rangle)\subseteq M_{t-1}$. Let $L=\langle m_0\ot v_0\rangle+\langle u\rangle$.
Obviously, the sum $\langle m_0\ot v_0\rangle+\langle u\rangle$ is direct and
soc$(L)\cong 2V_{\chi^{s-1}\l\s}$.

Let $N={\rm span}\{m_i\ot v_j|1\<i\<s, 0\<j\<t-1\}$. Then $N$ is a submodule of $M$
and $N$ is isomorphic to $V_s(\chi\l)\ot V_t(\s)$. By \leref{3.11}(1),
$V_s(\chi\l)\ot V_t(\s)\cong\oplus_{i=0}^{s-1}V_t(\chi^{i+1}\l\s)\cong\oplus_{i=1}^sV_t(\chi^i\l\s)$.
Hence $N$ contains a submodule $N_1$ with $N_1\cong \oplus_{i=1}^{s-1}V_t(\chi^i\l\s)$.
Let $N_2=N_1+L$. Then $N_2$ is a submodule.
Since soc$(N_1)\cong\oplus_{i=1}^{s-1}V_{\chi^{i-1}\l\s}\cong\oplus_{i=0}^{s-2}V_{\chi^i\l\s}$,
the sum $N_1+L$ is direct. It follows that ${\rm dim}(N_1+L)=(s+1)t={\rm dim}(M)$, and so
$M=N_1+L\cong(\oplus_{i=1}^{s-1}V_t(\chi^i\l\s))\oplus V_{t-s}(\l\s)\oplus V_{t+s}(\l\s)$.\\

(2) If $r=0$, then $t=l$ and the desired isomorphism follows from \leref{3.11}(2a).
Now let $r\>1$. We work by induction on $l$ in this case.

Let $l=1$. Then $M=\oplus_{n=0}^{rs+s}M_n$ as vector spaces. Let $\phi: M\ra M$ be the linear
map defined by $\phi(m)=xm$, $m\in M$. Now we have
$$\begin{array}{c}
x^{(r+1)s}(m_0\ot v_0)=\sum_{i=0}^{r+1}\binom{r+1}{i}x^{si}m_0\ot a^{si}x^{s(r+1-i)}v_0
=(r+1)\s^s(a)m_s\ot v_{rs}\neq 0\\
\end{array}$$
and $x^{(r+1)s+1}(m_0\ot v_0)=0$ by $x^{(r+1)s}(m_0\ot v_0)\in M_{rs+s}$.
It follows that the submodule $N_{1}=\langle m_0\ot v_0\rangle$
of $M$ is isomorphic to $V_{(r+1)s+1}(\l\s)$.
Moreover, soc$(N_{1})\subseteq M_{rs+s}$ and soc$(N_{1})=V_{\l\s}$.
Let $m=m_0\ot v_1-r\s(a)m_1\ot v_0\in M_1$. Then a straightforward computation
similar to the proof of \leref{3.10} shows that
$$x^{(r+1)s-2}m=x^{s-2}(x^{rs}m)=r\s^s(a)m_s\ot v_{rs-1}-r\s^{s-1}(a)m_{s-1}\ot v_{rs}\neq 0$$
and $x^{(r+1)s-1}m=0$. It follows that the submodule $N_{2}=\langle m\rangle$
of $M$ is isomorphic to $V_{(r+1)s-1}(\chi\l\s)$.
Moreover, soc$(N_{2})\subseteq M_{(r+1)s-1}$ and  soc$(N_{2})\cong V_{\chi^{s-1}\l\s}$.
Now a similar computation as above shows that $x^{rs-1}(m_j\ot v_1)\neq0$
but $x^{rs}(m_j\ot v_1)=0$ for any $1\< j\< s-2$. It follows that the submodule $P_j=\langle m_j\ot v_1\rangle$
of $M$ is isomorphic to $V_{rs}(\chi^{j+1}\l\s)$, and soc$(P_j)\subseteq M_{rs+j}$
and soc$(P_j)\cong V(\chi^j\l\s)$ for any $1\<j\<s-2$. Furthermore, one can see that
the sum $N_1+N_2+\sum_{j=1}^{s-2}P_j$ is direct. Let $L=N_1\oplus N_2\oplus (\oplus_{j=1}^{s-2}P_j)$.
Then $L$ is a submodule of $M$ and soc$(L)\subseteq\oplus_{n=rs+1}^{rs+s}M_n$.
By \leref{3.9}, dim(Ker$(\phi)\cap M_n)=1$ for any $rs\<n\<rs+s$.
Hence there exists a nonzero element $v\in M_{rs}$
such that $xv=0$. Moreover, the map $M_s\ra M_{rs}$, $m\mapsto x^{rs-s}m$
is bijective. Therefore, there is an element $\xi\in M_s$ such that $x^{rs-s}\xi=v$.
It follows that $\langle \xi \rangle\cong V_{(r-1)s+1}(\l\s)$ and
soc$(\langle \xi \rangle)\subseteq M_{rs}$. Obviously, the sum $L+\langle \xi \rangle$ is direct.
By comparing dim$(L+\langle \xi \rangle)$ and dim$M$, one gets that
$M=L\oplus\langle \xi \rangle\cong V_{(r+1)s+1}(\l\s)\oplus V_{(r+1)s-1}(\chi\l\s)
\oplus(\oplus_{2\<i\<s-1}V_{rs}({\chi^i}\l\s)\oplus V_{(r-1)s+1}(\l\s)$.

Let $l=2$. Then $s\>3$. By the above isomorphism, \leref{3.10} and (1), one can check that
$$\begin{array}{rl}
& V_{s+1}(\l)\ot V_{rs+1}(\s)\ot V_2(\e)\\
\cong&V_{(r+1)s+2}(\l\s)\oplus V_{(r+1)s}(\chi\l\s)\oplus V_{(r+1)s}(\chi\l\s)\oplus V_{(r+1)s-2}(\chi^2\l\s)\oplus (\oplus_{i=2}^{s-1}V_{rs}(\chi^i\l\s))\\
&\oplus (\oplus_{i=2}^{s-1}V_{rs}(\chi^{i+1}\l\s))\oplus V_{(r-1)s+2}(\l\s)\oplus V_{(r-1)s}(\chi\l\s)\\
\end{array}$$
and
$$\begin{array}{rl}
& V_{s+1}(\l)\ot V_{rs+1}(\s)\ot V_2(\e)\\
\cong&V_{s+1}(\l)\ot V_{rs+2}(\s)\oplus V_{(r-1)s}(\chi\l\s)\oplus V_{(r+1)s}(\chi\l\s)
\oplus(\oplus_{i=1}^{s-1}V_{rs}(\chi^{i+1}\l\s)).\\
\end{array}$$
Then it follows from Krull-Schmidt Theorem that
$$\begin{array}{rl}
&V_{s+1}(\l)\ot V_{rs+2}(\s)\\
\cong& V_{(r+1)s+2}(\l\s)\oplus V_{(r+1)s}(\chi\l\s)\oplus V_{(r+1)s-2}(\chi^2\l\s)
\oplus(\oplus_{3\<i\<s-1}V_{rs}(\chi^i\l\s)\oplus V_{(r-1)s+2}(\l\s).\\
\end{array}$$

Now let $s-1>l\>2$. Then by \leref{3.10} and the induction hypothesis,
a similar argument to $V_{s+1}(\l)\ot V_{rs+l}(\s)\ot V_2(\e)$ as above shows that
$$\begin{array}{rl}
V_{s+1}(\l)\ot V_{rs+l+1}(\s)
\cong& V_{(r+1)s+l+1}(\l\s)\oplus(\oplus_{1\<i\<l}V_{(r+1)s}(\chi^i\l\s))\oplus V_{(r+1)s-l-1}(\chi^{l+1}\l\s)\\
&\oplus (\oplus_{l+2\<i\<s-1}V_{rs}(\chi^i\l\s)\oplus V_{(r-1)s+l+1}(\l\s).\\
\end{array}$$
This completes the proof.
\end{proof}

\begin{lemma}\lelabel{3.13}
Let $\l, \s\in\hat{G}$ and $n, t\in\mathbb Z$ with $n, t\>1$. Assume $s|t$ and let
$t=rs$ and $n=r's+l$ with $0\<l\<s-1$. Then\\
$$\begin{array}{rl}
&V_n(\l)\ot V_t(\s)\cong V_t(\s)\ot V_n(\l)\\
\cong&(\oplus_{i=0}^{{\rm min}\{r',r-1\}}\oplus_{0\<j\<l-1}V_{(r+r'-2i)s}(\chi^j\l\s))
\oplus(\oplus_{0\<i\<{\rm min}\{r,r'\}-1}\oplus_{j=l}^{s-1}V_{(r+r^\p-1-2i)s}(\chi^j\l\s)).\\
\end{array}$$
\end{lemma}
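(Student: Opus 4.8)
The plan is to establish the stated formula for $V_n(\l)\ot V_t(\s)$ (with $t=rs$ fixed) by induction on $n$, and to obtain the companion isomorphism for $V_t(\s)\ot V_n(\l)$ by the symmetric argument, exactly as in the proofs of \leref{3.11} and \leref{3.12}. The single feature that makes the induction run is that, because $s\mid t$, every indecomposable summand occurring in the claimed formula has the form $V_{ds}(\mu)$ with dimension divisible by $s$; this property is visibly preserved under the operations below, and it is what lets me invoke the ``$s\mid(\text{second factor})$'' branches of the earlier lemmas uniformly. The base case $1\<n\<s$ (that is, $r'=0$, together with $n=s$) is precisely \leref{3.11}(1), which one checks specialises to the displayed expression.

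For the inductive step I would distinguish whether $s\nmid(n-1)$ or $s\mid(n-1)$, i.e. whether $l\neq 1$ or $l=1$. Suppose first $l\neq 1$. Then I would compute $V_2(\e)\ot V_{n-1}(\l)\ot V_t(\s)$ in two ways. Regrouping as $(V_2(\e)\ot V_{n-1}(\l))\ot V_t(\s)$ and using \leref{3.10}(1) (valid since $s\nmid(n-1)$) gives $V_n(\l)\ot V_t(\s)\,\oplus\,V_{n-2}(\chi\l)\ot V_t(\s)$, whose second summand is known by induction. Regrouping instead as $V_2(\e)\ot(V_{n-1}(\l)\ot V_t(\s))$ and inserting the induction hypothesis for the bracket---a direct sum of modules $V_{ds}(\mu)$---I would expand each factor by \leref{3.10}(2) into $V_{ds}(\mu)\oplus V_{ds}(\chi\mu)$, producing a fully explicit decomposition. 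Since all modules involved are indecomposable (\leref{3.9} together with the classification in \coref{2.4}), the Krull--Schmidt theorem lets me cancel the known $V_{n-2}(\chi\l)\ot V_t(\s)$ and read off $V_n(\l)\ot V_t(\s)$.

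The case $l=1$ is the genuine obstacle, because there $s\mid(n-1)$ and \leref{3.10}(1) no longer separates $V_n(\l)$ from $V_{n-2}(\chi\l)$ inside $V_2(\e)\ot V_{n-1}(\l)$, so the $V_2$-trick degenerates. For $n=s+1$ the assertion is exactly \leref{3.12}. For $n=r's+1$ with $r'\>2$ I would run the same two-way computation but with $V_{s+1}(\e)$ in place of $V_2(\e)$: expanding $V_{s+1}(\e)\ot V_{n-s}(\l)$ by \leref{3.12}(2b) yields $V_n(\l)$ as its top summand together with the strictly smaller modules $V_{n-2}(\chi\l)$, $V_{(r'-1)s}(\chi^i\l)$ and $V_{n-2s}(\l)$, all of whose tensor products with $V_t(\s)$ are known by induction, while expanding $V_{s+1}(\e)\ot V_{ds}(\mu)$ on the other grouping by \leref{3.12}(1) is again explicit. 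Krull--Schmidt then isolates $V_n(\l)\ot V_t(\s)$.

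I expect the hard part to be purely combinatorial: after each cancellation I must reindex the resulting double sums, indexed by a block parameter $i$ and a weight exponent $j$ (with the two families of weights $\chi^j\l\s$ for $0\<j\<l-1$ and for $l\<j\<s-1$ carrying block-lengths $(r+r'-2i)s$ and $(r+r'-1-2i)s$ respectively), and verify that the bounds ${\rm min}\{r',r-1\}$ and ${\rm min}\{r,r'\}-1$ emerge correctly. This forces a separate check of the boundary behaviour in the subcases $r'<r$, $r'=r$ and $r'>r$, and at the transition exponent $j=l$; this matching, together with the parallel bookkeeping for the reverse order $V_t(\s)\ot V_n(\l)$, is where the real care is needed, whereas each individual module computation is routine given \leref{3.10}, \leref{3.11} and \leref{3.12}.
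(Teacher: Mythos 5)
Your proposal is correct, and it runs on the same engine as the paper's proof---re-associating a triple tensor product, expanding both groupings with the fusion rules of \leref{3.10}, \leref{3.11} and \leref{3.12}, and cancelling by the Krull--Schmidt theorem---but the induction is organized differently. The paper inducts on $r''={\rm min}\{r',r-1\}$ and splits into two cases according to which of $r'$, $r$ realizes the minimum: when $r'\leq r-1$ it decrements $r'$ by expanding $V_{s+1}(\e)\ot V_{(r'-1)s+l}(\l)$, and when $r\leq r'$ it decrements $r$ by tensoring $V_{(r-1)s}(\s)\ot V_{s+1}(\e)$ on the other side. You instead fix $t=rs$ and induct on $n$ alone, stepping by $1$ with $V_2(\e)$ whenever $s\nmid(n-1)$ and by $s$ with $V_{s+1}(\e)$ and \leref{3.12}(2b) at the crossings $n\equiv 1\ ({\rm mod}\ s)$. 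Your key observation---that $s\mid t$ forces every summand of the answer to have the form $V_{ds}(\mu)$, so that the $s\mid(\cdot)$ branches of \leref{3.10} and \leref{3.12} apply verbatim on the re-associated side---is exactly what makes this run, and it spares you the paper's second case entirely. The price is that your induction must pass through every intermediate value of $n$, whereas the paper's jumps by $s$ at a time once $r''\geq 2$; the residual reindexing you defer is the same bookkeeping the paper carries out explicitly, and I see no obstruction to completing it along the lines you describe.
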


\begin{proof}
We only consider $V_n(\l)\ot V_t(\s)$ since the proof is similar for $V_t(\s)\ot V_n(\l)$.
Let $r''={\rm min}\{r',r-1\}$. We work by induction on $r''$.

If $r''=0$, then $r'=0<r$ or $r=1\<r'$, and hence it follows from \leref{3.11} in this case.
Let $r''=1$. Then $r'=1<r$ or $r=2\<r'$. We only consider the case of $r'=1<r$ since the proof for $r=2\<r'$
is similar to that in the induction step.
Assume $r'=1<r$. We work by induction on $l$. For $l=0$ and $l=1$, it follows from \leref{3.11}(1)
and \leref{3.12}(1), respectively. Now let $1\<l<s-1$.
Then by \leref{3.10} and the induction hypothesis, we have
$$\begin{array}{rl}
&V_{2}(\e)\ot V_{s+l}(\l)\ot V_{rs}(\s)\\
\cong&V_{s+l+1}(\l)\ot V_{rs}(\s)\oplus V_{s+l-1}(\chi\l)\ot V_{rs}(\s)\\
\cong&V_{s+l+1}(\l)\ot V_{rs}(\s)\oplus(\oplus_{i=0}^{1}\oplus_{0\<j\<l-2}V_{(r+1-2i)s}(\chi^{j+1}\l\s))
\oplus(\oplus_{j=l-1}^{s-1}V_{rs}(\chi^{j+1}\l\s))\\
\end{array}$$
and
$$\begin{array}{rl}
&V_{2}(\e)\ot V_{s+l}(\l)\ot V_{rs}(\s)\\
\cong&(\oplus_{i=0}^{1}\oplus_{j=0}^{l-1}V_{2}(\e)\ot V_{(r+1-2i)s}(\chi^j\l\s))\oplus
(\oplus_{j=l}^{s-1}V_{2}(\e)\ot V_{rs}(\chi^j\l\s))\\
\cong&(\oplus_{i=0}^{1}\oplus_{j=0}^{l-1}(V_{(r+1-2i)s}(\chi^j\l\s)\oplus V_{(r+1-2i)s}(\chi^{j+1}\l\s)))\\
&\oplus(\oplus_{j=l}^{s-1}(V_{rs}(\chi^j\l\s)\oplus V_{rs}(\chi^{j+1}\l\s))).\\
\end{array}$$
Thus, it follows from Krull-Schmidt Theorem that
$$\begin{array}{c}
V_{s+l+1}(\l)\ot V_{rs}(\s)
\cong(\oplus_{i=0}^{1}\oplus_{j=0}^lV_{(r+1-2i)s}(\chi^j\l\s))\oplus(\oplus_{j=l+1}^{s-1}V_{rs}(\chi^j\l\s)).\\
\end{array}$$

Now assume $r''>1$. Then $2\<r''=r'<r$ or $3\<r''+1=r\<r'$.

Case 1: $2\<r''=r'<r$. In this case, $0\<r'-2<r'-1<r-1$. If $l=0$, then by \leref{3.12} and the induction hypothesis, we have
$$\begin{array}{rl}
&V_{s+1}(\e)\ot V_{(r'-1)s}(\l)\ot V_{rs}(\s)\\
\cong&V_{r's}(\l)\ot V_{rs}(\s)\oplus(\oplus_{p=1}^{s-1}V_{(r'-1)s}(\chi^p\l)\ot V_{rs}(\s))\oplus V_{(r'-2)s}(\l)\ot V_{rs}(\s)\\
\cong&V_{r's}(\l)\ot V_{rs}(\s)\oplus(\oplus_{p=1}^{s-1}\oplus_{i=0}^{r'-2}\oplus_{j=0}^{s-1}V_{(r+r'-2-2i)s}(\chi^{j+p}\l\s))\\
&\oplus(\oplus_{0\<i\<r'-3}\oplus_{j=0}^{s-1}V_{(r+r'-3-2i)s}(\chi^j\l\s))
\end{array}$$
and
$$\begin{array}{rl}
&V_{s+1}(\e)\ot V_{(r'-1)s}(\l)\ot V_{rs}(\s)\\
\cong&V_{s+1}(\e)\ot(\oplus_{i=0}^{r'-2}\oplus_{j=0}^{s-1}V_{(r+r'-2-2i)s}(\chi^j\l\s))\\
\cong&(\oplus_{i=0}^{r'-2}\oplus_{j=0}^{s-1}V_{(r+r'-1-2i)s}(\chi^j\l\s))
\oplus(\oplus_{i=0}^{r'-2}\oplus_{j=0}^{s-1}\oplus_{p=1}^{s-1}V_{(r+r'-2-2i)s}(\chi^{j+p}\l\s))\\
&\oplus(\oplus_{i=0}^{r'-2}\oplus_{j=0}^{s-1}V_{(r+r'-3-2i)s}(\chi^j\l\s)).\\
\end{array}$$
Then it follows from Krull-Schmidt Theorem that
$$V_{r's}(\l)\ot V_{rs}(\s)\cong\oplus_{i=0}^{r'-1}\oplus_{j=0}^{s-1}V_{(r+r'-1-2i)s}(\chi^j\l\s).$$
If $1\<l\<s-1$, then by \leref{3.12}, the above isomorphism and the induction hypothesis, we have
$$\begin{array}{rl}
&V_{s+1}(\e)\ot V_{(r'-1)s+l}(\l)\ot V_{rs}(\s)\\
\cong&V_{r's+l}(\l)\ot V_{rs}(\s)\oplus(\oplus_{1\leq p\leq l-1}V_{r's}(\chi^p\l)\ot V_{rs}(\s))
\oplus V_{r's-l}(\chi^{l}\l)\ot V_{rs}(\s)\\
&\oplus(\oplus_{l+1\<p\<s-1}V_{(r'-1)s}(\chi^p\l)\ot V_{rs}(\s))\oplus V_{(r'-2)s+l}(\l)\ot V_{rs}(\s)\\
\cong&V_{r's+l}(\l)\ot V_{rs}(\s)\oplus(\oplus_{1\<p\<l-1}\oplus_{i=0}^{r'-1}\oplus_{j=0}^{s-1}V_{(r+r'-1-2i)s}(\chi^{p+j}\l\s))\\
&\oplus(\oplus_{i=0}^{r'-1}\oplus_{j=0}^{s-l-1}V_{(r+r'-1-2i)s}(\chi^{l+j}\l\s))
\oplus(\oplus_{i=0}^{r'-2}\oplus_{j=s-l}^{s-1}V_{(r+r'-2-2i)s}(\chi^{l+j}\l\s))\\
&\oplus(\oplus_{l+1\<p\<s-1}\oplus_{i=0}^{r'-2}\oplus_{j=0}^{s-1}V_{(r+r'-2-2i)s}(\chi^{p+j}\l\s))\\
&\oplus(\oplus_{i=0}^{r'-2}\oplus_{j=0}^{l-1}V_{(r+r'-2-2i)s}(\chi^j\l\s))
\oplus(\oplus_{0\<i\<r'-3}\oplus_{j=l}^{s-1}V_{(r+r'-3-2i)s}(\chi^j\l\s))\\
\end{array}$$
and
$$\begin{array}{rl}
&V_{s+1}(\e)\ot V_{(r'-1)s+l}(\l)\ot V_{rs}(\s)\\
\cong&(\oplus_{i=0}^{r'-1}\oplus_{j=0}^{l-1}V_{s+1}(\e)\ot V_{(r+r'-1-2i)s}(\chi^j\l\s))
\oplus(\oplus_{i=0}^{r'-2}\oplus_{j=l}^{s-1}V_{s+1}(\e)\ot V_{(r+r'-2-2i)s}(\chi^j\l\s))\\
\cong&(\oplus_{i=0}^{r'-1}\oplus_{j=0}^{l-1}(V_{(r+r'-2i)s}(\chi^j\l\s)
\oplus(\oplus_{p=1}^{s-1}V_{(r+r'-1-2i)s}(\chi^{p+j}\s\l))\oplus V_{(r+r'-2-2i)s}(\chi^j\l\s)))\\
&\oplus(\oplus_{i=0}^{r'-2}\oplus_{j=l}^{s-1}(V_{(r+r'-1-2i)s}(\chi^j\l\s)
\oplus(\oplus_{p=1}^{s-1}V_{(r+r'-2-2i)s}(\chi^{p+j}\s\l))\oplus V_{(r+r'-3-2i)s}(\chi^j\l\s))).\\
\end{array}$$
Then it follows from Krull-Schmidt Theorem that
$$\begin{array}{c}
V_{r's+l}(\l)\ot V_{rs}(\s)
\cong(\oplus_{i=0}^{r'}\oplus_{j=0}^{l-1}V_{(r+r'-2i)s}(\chi^j\l\s))
\oplus(\oplus_{i=0}^{r'-1}\oplus_{j=l}^{s-1}V_{(r+r'-1-2i)s}(\chi^j\l\s)).\\
\end{array}$$

Case 2: $3\<r''+1=r\<r'$. In this case, $1\<r-2<r-1<r'$. Hence by the induction hypothesis and \leref{3.12}, we have
$$\begin{array}{rl}
&V_{r's+l}(\l)\ot V_{(r-1)s}(\s)\ot V_{s+1}(\e)\\
\cong&((\oplus_{i=0}^{r-2}\oplus_{0\<j\<l-1}V_{(r+r'-1-2i)s}(\chi^j\s\l))
\oplus(\oplus_{i=0}^{r-2}\oplus_{j=l}^{s-1}V_{(r+r'-2-2i)s}(\chi^j\l\s)))\ot V_{s+1}(\e)\\
\cong&(\oplus_{i=0}^{r-2}\oplus_{0\<j\<l-1}V_{(r+r'-2i)s}(\chi^j\l\s))\oplus
(\oplus_{i=0}^{r-2}\oplus_{0\<j\<l-1}\oplus_{p=1}^{s-1}V_{(r+r'-1-2i)s}(\chi^{p+j}\l\s))\\
&\oplus(\oplus_{i=0}^{r-2}\oplus_{0\<j\<l-1}V_{(r+r'-2-2i)s}(\chi^j\l\s))
\oplus(\oplus_{i=0}^{r-2}\oplus_{j=l}^{s-1}V_{(r+r'-1-2i)s}(\chi^j\l\s))\\
&\oplus(\oplus_{i=0}^{r-2}\oplus_{j=l}^{s-1}\oplus_{p=1}^{s-1}V_{(r+r'-2-2i)s}(\chi^{p+j}\l\s))
\oplus(\oplus_{i=0}^{r-2}\oplus_{j=l}^{s-1}V_{(r+r'-3-2i)s}(\chi^j\l\s))\\
\end{array}$$
and
$$\begin{array}{rl}
&V_{r's+l}(\l)\ot V_{(r-1)s}(\s)\ot V_{s+1}(\e)\\
\cong&V_{r's+l}(\l)\ot(V_{rs}(\s)\oplus(\oplus_{p=1}^{s-1}V_{(r-1)s}(\chi^p\s))\oplus V_{(r-2)s}(\s))\\
\cong&V_{r's+l}(\l)\ot V_{rs}(\s)\oplus(\oplus_{p=1}^{s-1}\oplus_{i=0}^{r-2}\oplus_{0\<j\<l-1}V_{(r+r'-1-2i)s}(\chi^{p+j}\l\s))\\
&\oplus(\oplus_{p=1}^{s-1}\oplus_{i=0}^{r-2}\oplus_{j=l}^{s-1}V_{(r+r'-2-2i)s}(\chi^{p+j}\l\s))\oplus(\oplus_{i=0}^{r-3}
\oplus_{0\<j\<l-1}V_{(r+r'-2-2i)s}(\chi^{j}\l\s))\\
&\oplus(\oplus_{i=0}^{r-3}\oplus_{j=l}^{s-1}V_{(r+r'-3-2i)s}(\chi^{j}\l\s))\\
\end{array}$$
Then it follows from Krull-Schmidt Theorem that
$$\begin{array}{c}
V_{r's+l}(\l)\ot V_{rs}(\s)
\cong(\oplus_{i=0}^{r-1}\oplus_{0\<j\<l-1}V_{(r+r'-2i)s}(\chi^j\l\s))
\oplus(\oplus_{i=0}^{r-1}\oplus_{j=l}^{s-1}V_{(r+r'-1-2i)s}(\chi^j\l\s)).\\
\end{array}$$
\end{proof}

\begin{lemma}\lelabel{3.14}
Let $\l, \s\in\hat{G}$ and $n, r\in\mathbb Z$ with $n\>1$ and $r\>0$.
Assume $s\nmid n$ and let $n=r's+l$ with $1\<l\<s-1$ and $r'\>0$. Then we have
$$\begin{array}{rl}
&V_n(\l)\ot V_{rs+1}(\s)\cong V_{rs+1}(\s)\ot V_n(\l)\\
\cong&(\oplus_{i=0}^{{\rm min}\{r', r\}}V_{(r+r'-2i)s+l}(\l\s))
\oplus(\oplus_{0\<i\<{\rm min}\{r', r-1\}}\oplus_{1\leq j\leq l-1}V_{(r+r'-2i)s}(\chi^j\l\s))\\
&\oplus(\oplus_{0\<i\<{\rm min}\{r', r\}-1}V_{(r+r'-2i)s-l}(\chi^{l}\l\s))
\oplus(\oplus_{0\<i\<{\rm min}\{r', r\}-1}\oplus_{l+1\<j\<s-1}V_{(r+r'-1-2i)s}(\chi^j\l\s)).\\
\end{array}$$
\end{lemma}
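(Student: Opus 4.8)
The plan is to prove the decomposition by strong induction on $r'$, with the commutativity $V_n(\l)\ot V_{rs+1}(\s)\cong V_{rs+1}(\s)\ot V_n(\l)$ obtained in parallel, since every auxiliary lemma I will invoke (\leref{3.8}, \leref{3.11}, \leref{3.12}, \leref{3.13}) already records that both orders of its tensor product are isomorphic, so the two orders satisfy the same recursion with the same base case. I first note that the target is a tensor product of two modules of nilpotent type, so by \leref{3.9} the operator $x^s$ acts nilpotently on it; hence the Jordan-form machinery of \leref{3.4} does \emph{not} apply here (it needs $x^s$ to act invertibly), and the answer must be bootstrapped from the smaller cases already settled. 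There are two base situations: if $r=0$ then $V_{rs+1}(\s)=V_1(\s)$ and the formula collapses to $V_n(\l\s)$ by \leref{3.8}; if $r'=0$ then $n=l$ with $1\<l\<s-1$, so $1\<n\<s$ and the claim is exactly \leref{3.11}(2a). Thus we may assume $r,r'\>1$.

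For the inductive step I would insert a copy of $V_{s+1}(\e)$ and compute the triple product $V_{s+1}(\e)\ot V_{(r'-1)s+l}(\l)\ot V_{rs+1}(\s)$ in two ways, then cancel using the Krull--Schmidt Theorem, exactly as in the proofs of \leref{3.12} and \leref{3.13}. Grouping the first two factors, \leref{3.12}(2) (part (a) when $r'=1$, part (b) when $r'\>2$) expands $V_{s+1}(\e)\ot V_{(r'-1)s+l}(\l)$ as a direct sum whose leading summand is $V_{r's+l}(\l)$; tensoring this leading term on the right by $V_{rs+1}(\s)$ produces precisely the module $V_{r's+l}(\l)\ot V_{rs+1}(\s)$ we are after. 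Grouping instead the last two factors, the induction hypothesis (the $r'-1$ case) decomposes $V_{(r'-1)s+l}(\l)\ot V_{rs+1}(\s)$, and \leref{3.12} applied summand-by-summand yields the second expansion. Equating the two expansions and deleting the common summands isolates $V_{r's+l}(\l)\ot V_{rs+1}(\s)$.

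The summands other than the target occurring in the first expansion are of two kinds. Those of the form $V_{r's-l}(\chi^l\l)\ot V_{rs+1}(\s)$ and $V_{(r'-2)s+l}(\l)\ot V_{rs+1}(\s)$ are again instances of the present lemma, with $r'$-parameter $r'-1$ and $r'-2$ respectively (in the first, with $l$ replaced by $s-l$, since $r's-l=(r'-1)s+(s-l)$), so they are supplied by the induction hypothesis. The remaining summands have the shape $V_{ks}(\mu)\ot V_{rs+1}(\s)$ with $s\mid ks$; these I handle by \leref{3.13}, applied to the reversed product $V_{rs+1}(\s)\ot V_{ks}(\mu)$---legitimate because there the $s$-divisible factor sits in the second slot, and \leref{3.13} also asserts that the two orders agree. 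In the same way every summand produced in the second expansion is covered by \leref{3.12}. Hence all terms except the target are known, and Krull--Schmidt determines the target.

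The real work, as in \leref{3.13}, is purely combinatorial bookkeeping: one must check that after cancellation the surviving summands assemble into exactly the four families in the statement. Because the upper limits $\min\{r',r\}$, $\min\{r',r-1\}$ and $\min\{r',r\}-1$ saturate at $r$ once $r'$ passes $r$, the tallies behave differently in the regimes $r'<r$, $r'=r$ and $r'>r$, and the computation must be carried out separately in each, just as the paper does for the analogous transitions in \thref{3.6}, \thref{3.7} and \leref{3.13}. This case-by-case index chasing---keeping track of which indecomposables $V_{m}(\chi^{j}\l\s)$ occur and with what multiplicity---is the main obstacle; a convenient independent check at the end is that the total dimension of the right-hand side equals $n(rs+1)=(r's+l)(rs+1)$.
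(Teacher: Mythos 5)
Your proposal is correct and follows essentially the same route as the paper: the paper's own induction step computes the triple product $V_{s+1}(\e)\ot V_{(r'-1)s+l}(\l)\ot V_{rs+1}(\s)$ in two ways via \leref{3.12}, \leref{3.13} and the induction hypothesis and then cancels with the Krull--Schmidt theorem, exactly as you describe, including the identification $r's-l=(r'-1)s+(s-l)$ and the use of the reversed-order statements of the auxiliary lemmas. The only difference is organizational: the paper splits into the cases $r'\<r$ (induction on $r'$, with a sub-induction on $l$ using $V_2(\e)$ to handle $r'=1$) and $r'>r$ (induction on $l$ using $V_2(\e)$), whereas you run a single strong induction on $r'$ tensoring with $V_{s+1}(\e)$ throughout; both reduce to the same case-by-case index bookkeeping in the regimes $r'<r$, $r'=r$, $r'>r$ that you flag at the end.
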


\begin{proof}
We prove the lemma for $r'\<r$ and $r'>r$, respectively.

Case 1: $r'\<r$. In this case, we have
$$\oplus_{0\<i\<{\rm min}\{r', r-1\}}\oplus_{1\leq j\leq l-1}V_{(r+r'-2i)s}(\chi^j\l\s)
\cong\oplus_{i=0}^{r'}\oplus_{1\leq j\leq l-1}V_{(r+r'-2i)s}(\chi^j\l\s),$$
since $\oplus_{i=0}^{r'}\oplus_{1\leq j\leq l-1}V_{(r+r'-2i)s}(\chi^j\l\s)
\cong \oplus_{0\<i\<r-1}\oplus_{1\leq j\leq l-1}V_{(r+r'-2i)s}(\chi^j\l\s)$ when $r'=r$.
We work by induction on $r'$, and only consider $V_n(\l)\ot V_{rs+1}(\s)$
since the proof is similar for $V_{rs+1}(\s)\ot V_n(\l)$.
For $r'=0$, it follows from \leref{3.11}.
For $r'=1$, we work by induction on $l$. If $l=1$, then it follows from \leref{3.12}.
If $l=2$, by \leref{3.10}, \leref{3.11} and the case of $l=1$ (or \leref{3.12}), we have
$$\begin{array}{rl}
V_2(\e)\ot V_{s+1}(\l)\ot V_{rs+1}(\s)
\cong&V_{s+2}(\l)\ot V_{rs+1}(\s)\oplus V_s(\chi\l)\ot V_{rs+1}(\s)\\
\cong&V_{s+2}(\l)\ot V_{rs+1}(\s)\oplus V_{(r+1)s}(\chi\l\s)\oplus(\oplus_{i=1}^{s-1}V_{rs}(\chi^{i+1}\l\s))
\end{array}$$
and
$$\begin{array}{rl}
&V_2(\e)\ot V_{s+1}(\l)\ot V_{rs+1}(\s)\\
\cong&V_2(\e)\ot((\oplus_{i=0}^1V_{(r+1-2i)s+1}(\l\s))\oplus V_{(r+1)s-1}(\chi\l\s)
\oplus(\oplus_{2\leq i\leq s-1}V_{rs}(\chi^i\l\s)))\\
\cong&V_{(r+1)s+2}(\l\s)\oplus 2V_{(r+1)s}(\chi\l\s)\oplus V_{(r+1)s-2}(\chi^2\l\s)
\oplus(\oplus_{2\<i\<s-1}V_{rs}(\chi^i\l\s))\\
&\oplus(\oplus_{2\<i\<s-1}V_{rs}(\chi^{i+1}\l\s))\oplus V_{(r-1)s+2}(\l\s)\oplus V_{(r-1)s}(\chi\l\s).\\
\end{array}$$
Then it follows from Krull-Schmidt Theorem that
$$\begin{array}{rl}
V_{s+2}(\l)\ot V_{rs+1}(\s)
\cong&(\oplus_{i=0}^{1}V_{(r+1-2i)s+2}(\l\s))\oplus(\oplus_{i=0}^{1}V_{(r+1-2i)s}(\chi\l\s))\\
&\oplus V_{(r+1)s-2}(\chi^2\l\s)\oplus(\oplus_{3\<j\<s-1}V_{rs}(\chi^j\l\s)).
\end{array}$$
Now let $2\<l<s-1$. Then by \leref{3.10} and the induction hypothesis, a similar argument
as above shows that
$$\begin{array}{rl}
V_{s+l+1}(\l)\ot V_{rs+1}(\s)
\cong&(\oplus_{i=0}^{1}V_{(r+1-2i)s+l+1}(\l\s))\oplus(\oplus_{i=0}^{1}\oplus_{j=1}^{l}V_{(r+1-2i)s}(\chi^j\l\s))\\
&\oplus V_{(r+1)s-l-1}(\chi^{l+1}\l\s)\oplus(\oplus_{l+2\<j\<s-1}V_{rs}(\chi^j\l\s)).\\
\end{array}$$

Now assume $r'\>2$. Then $0\<r'-2<r'-1<r$.
By \leref{3.12}, \leref{3.13} and the induction hypothesis, we have
$$\begin{array}{rl}
&V_{s+1}(\e)\ot V_{(r'-1)s+l}(\l)\ot V_{rs+1}(\s)\\
\cong&V_{r's+l}(\l)\ot V_{rs+1}(\s)\oplus(\oplus_{1\leq p\leq l-1}V_{r's}(\chi^p\l)\ot V_{rs+1}(\s))
\oplus V_{r's-l}(\chi^{l}\l)\ot V_{rs+1}(\s)\\
&\oplus(\oplus_{l+1\<p\<s-1}V_{(r'-1)s}(\chi^p\l)\ot V_{rs+1}(\s))\oplus V_{(r'-2)s+l}(\l)\ot V_{rs+1}(\s)\\
\cong&V_{r's+l}(\l)\ot V_{rs+1}(\s)\oplus(\oplus_{1\<p\<l-1}\oplus_{i=0}^{r'-1}V_{(r+r'-2i)s}(\chi^p\l\s))\\
&\oplus(\oplus_{1\<p\<l-1}\oplus_{i=0}^{r'-1}\oplus_{j=1}^{s-1}V_{(r+r'-1-2i)s}(\chi^{p+j}\l\s))
\oplus(\oplus_{i=0}^{r'-1}V_{(r+r'-2i)s-l}(\chi^{l}\l\s))\\
&\oplus(\oplus_{i=0}^{r'-1}\oplus_{1\<j\<s-l-1}V_{(r+r'-1-2i)s}(\chi^{j+l}\l\s))
\oplus(\oplus_{i=0}^{r'-2}V_{(r+r'-2-2i)s+l}(\l\s))\\
&\oplus(\oplus_{i=0}^{r'-2}\oplus_{s-l+1\<j\<s-1}V_{(r+r'-2-2i)s}(\chi^{j+l}\l\s))
\oplus(\oplus_{l+1\<p\<s-1}\oplus_{i=0}^{r'-2}V_{(r+r'-1-2i)s}(\chi^p\l\s))\\
&\oplus(\oplus_{l+1\<p\<s-1}\oplus_{i=0}^{r'-2}\oplus_{j=1}^{s-1}V_{(r+r'-2-2i)s}(\chi^{p+j}\l\s))
\oplus(\oplus_{i=0}^{r'-2}V_{(r+r'-2-2i)s+l}(\l\s))\\
&\oplus(\oplus_{i=0}^{r'-2}\oplus_{1\<j\<l-1}V_{(r+r'-2-2i)s}(\chi^{j}\l\s))
\oplus(\oplus_{0\<i\<r'-3}V_{(r+r'-2-2i)s-l}(\chi^{l}\l\s))\\
&\oplus(\oplus_{0\<i\<r'-3}\oplus_{l+1\<j\<s-1}V_{(r+r'-3-2i)s}(\chi^j\l\s))\\
\end{array}$$
and
$$\begin{array}{rl}
&V_{s+1}(\e)\ot V_{(r'-1)s+l}(\l)\ot V_{rs+1}(\s)\\
\cong&V_{s+1}(\e)\ot((\oplus_{i=0}^{r'-1}V_{(r+r'-1-2i)s+l}(\l\s))
\oplus(\oplus_{i=0}^{r'-1}\oplus_{1\<j\<l-1}V_{(r+r'-1-2i)s}(\chi^j\l\s))\\
&\oplus(\oplus_{i=0}^{r'-2}V_{(r+r'-1-2i)s-l}(\chi^{l}\l\s))
\oplus(\oplus_{i=0}^{r'-2}\oplus_{l+1\<j\<s-1}V_{(r+r'-2-2i)s}(\chi^j\l\s)))\\
\cong&(\oplus_{i=0}^{r'-1}V_{(r+r'-2i)s+l}(\l\s))\oplus(\oplus_{i=0}^{r'-1}
\oplus_{1\leq j\leq l-1}V_{(r+r'-2i)s}(\chi^j\l\s))
\oplus(\oplus_{i=0}^{r'-1}V_{(r+r'-2i)s-l}(\chi^{l}\l\s))\\
&\oplus(\oplus_{i=0}^{r'-1}\oplus_{l+1\<j\<s-1}V_{(r+r'-1-2i)s}(\chi^j\l\s))
\oplus(\oplus_{i=0}^{r'-1}V_{(r+r'-2-2i)s+l}(\l\s))\\
&\oplus(\oplus_{i=0}^{r'-1}\oplus_{1\<j\<l-1}V_{(r+r'-2i)s}(\chi^j\l\s))
\oplus(\oplus_{i=0}^{r'-1}\oplus_{1\<j\<l-1}\oplus_{p=1}^{s-1}V_{(r+r'-1-2i)s}(\chi^{j+p}\l\s))\\
&\oplus(\oplus_{i=0}^{r'-1}\oplus_{1\<j\<l-1}V_{(r+r'-2-2i)s}(\chi^j\l\s))
\oplus(\oplus_{i=0}^{r'-2}V_{(r+r'-2i)s-l}(\chi^{l}\l\s))\\
&\oplus(\oplus_{i=0}^{r'-2}\oplus_{1\<j\< s-l-1}V_{(r+r'-1-2i)s}(\chi^{j+l}\l\s))
\oplus(\oplus_{i=0}^{r'-2}V_{(r+r'-2-2i)s+l}(\l\s))\\
&\oplus(\oplus_{i=0}^{r'-2}\oplus_{s-l+1\<j\<s-1}V_{(r+r'-2-2i)s}(\chi^{j+l}\l\s))
\oplus(\oplus_{i=0}^{r'-2}V_{(r+r'-2-2i)s-l}(\chi^{l}\l\s))\\
&\oplus(\oplus_{i=0}^{r'-2}\oplus_{l+1\<j\<s-1}V_{(r+r'-1-2i)s}(\chi^j\l\s))
\oplus(\oplus_{i=0}^{r'-2}\oplus_{l+1\<j\<s-1}\oplus_{p=1}^{s-1}V_{(r+r'-2-2i)s}(\chi^{j+p}\l\s))\\
&\oplus(\oplus_{i=0}^{r'-2}\oplus_{l+1\<j\<s-1}V_{(r+r'-3-2i)s}(\chi^j\l\s)).\\
\end{array}$$
Then it follows from Krull-Schmidt Theorem that
$$\begin{array}{rl}
&V_{r's+l}(\l)\ot V_{rs+1}(\s)\\
\cong&(\oplus_{i=0}^{r'}V_{(r+r'-2i)s+l}(\l\s))
\oplus(\oplus_{i=0}^{r'}\oplus_{1\<j\<l-1}V_{(r+r'-2i)s}(\chi^j\l\s))\\
&\oplus(\oplus_{i=0}^{r'-1}V_{(r+r'-2i)s-l}(\chi^{l}\l\s))
\oplus(\oplus_{i=0}^{r'-1}\oplus_{l+1\<j\<s-1}V_{(r+r'-1-2i)s}(\chi^j\l\s)).
\end{array}$$

Case 2: $r'>r$. In this case, we work by induction on $l$, and only consider $V_n(\l)\ot V_{rs+1}(\s)$
since the proof is similar for $V_{rs+1}(\s)\ot V_n(\l)$.
For $l=1$, it follows from Case 1.
For $l=2$, by the decomposition for $l=1$, \leref{3.10} and \leref{3.13}, we have
$$\begin{array}{rl}
&V_2(\e)\ot V_{r's+1}(\l)\ot V_{rs+1}(\s)\\
\cong&(\oplus_{i=0}^{r}V_2(\e)\ot V_{(r+r'-2i)s+1}(\l\s))
\oplus(\oplus_{0\<i\<r-1}V_2(\e)\ot V_{(r+r'-2i)s-1}(\chi\l\s))\\
&\oplus(\oplus_{0\<i\<r-1}\oplus_{2\<j\<s-1}V_2(\e)\ot V_{(r+r'-1-2i)s}(\chi^{j}\l\s))\\
\cong&(\oplus_{i=0}^{r}V_{(r+r'-2i)s+2}(\l\s))
\oplus(\oplus_{i=0}^{r}V_{(r+r'-2i)s}(\chi\l\s))
\oplus(\oplus_{0\<i\<r-1}V_{(r+r'-2i)s}(\chi\l\s))\\
&\oplus(\oplus_{0\<i\<r-1}V_{(r+r'-2i)s-2}(\chi^2\l\s))
\oplus(\oplus_{0\<i\<r-1}\oplus_{2\<j\<s-1}V_{(r+r'-1-2i)s}(\chi^j\l\s))\\
&\oplus(\oplus_{0\<i\<r-1}\oplus_{2\<j\<s-1}V_{(r+r'-1-2i)s}(\chi^{j+1}\l\s))
\end{array}$$
and
$$\begin{array}{rl}
V_2(\e)\ot V_{r's+1}(\l)\ot V_{rs+1}(\s)
\cong&V_{r's+2}(\l)\ot V_{rs+1}(\s)\oplus V_{r's}(\chi\l)\ot V_{rs+1}(\s)\\
\cong&V_{r's+2}(\l)\ot V_{rs+1}(\s)\oplus(\oplus_{i=0}^rV_{(r+r'-2i)s}(\chi\l\s))\\
&\oplus(\oplus_{0\<i\<r-1}\oplus_{j=1}^{s-1}V_{(r+r'-1-2i)s}(\chi^{j+1}\l\s)).\\
\end{array}$$
Then it follows from Krull-Schmidit Theorem that
$$\begin{array}{rl}
&V_{r's+2}(\l)\ot V_{rs+1}(\s)\\
\cong&(\oplus_{i=0}^{r}V_{(r+r'-2i)s+2}(\l\s))
\oplus(\oplus_{0\<i\<r-1}V_{(r+r'-2i)s}(\chi\l\s))\\
&\oplus(\oplus_{0\<i\<r-1}V_{(r+r'-2i)s-2}(\chi^2\l\s))
\oplus(\oplus_{0\<i\<r-1}\oplus_{3\<j\<s-1}V_{(r+r'-1-2i)s}(\chi^j\l\s)).
\end{array}$$
Now let $2<l\<s-1$. Then by \leref{3.10} and the induction hypothesis,
a similar argument as above shows that
$$\begin{array}{rl}
&V_{r's+l}(\l)\ot V_{rs+1}(\s)\\
\cong&(\oplus_{i=0}^{r}V_{(r+r'-2i)s+l}(\l\s))
\oplus(\oplus_{0\<i\<r-1}\oplus_{j=1}^{l-1}V_{(r+r'-2i)s}(\chi^j\l\s))\\
&\oplus(\oplus_{0\<i\<r-1}V_{(r+r'-2i)s-l}(\chi^{l}\l\s))
\oplus(\oplus_{0\<i\<r-1}\oplus_{l+1\<j\<s-1}V_{(r+r'-1-2i)s}(\chi^j\l\s)).\\
\end{array}$$
This completes the proof.
\end{proof}

\begin{theorem}\thlabel{3.15}
Let $\l, \s\in\hat{G}$ and $n, t\in\mathbb Z$ with $n\>t\>1$.
Assume that $n=r's+l'$ and $t=rs+l$ with $0\<l', l\<s-1$.\\
{\rm (1)} Suppose that $l+l'\<s$. If $l\<l'$ then
$$\begin{array}{rl}
&V_n(\l)\ot V_t(\s)\cong V_t(\s)\ot V_n(\l)\\
\cong&(\oplus_{i=0}^{r}\oplus_{0\<j\<l-1}V_{n+t-1-2is-2j}(\chi^j\l\s))
\oplus(\oplus_{0\<i\<r-1}\oplus_{l\<j\<l'-1}V_{(r+r'-2i)s}(\chi^j\l\s))\\
&\oplus(\oplus_{0\<i\<r-1}\oplus_{l'\<j\<l+l'-1}V_{n+t-1-2is-2j}(\chi^j\l\s))
\oplus(\oplus_{0\<i\<r-1}\oplus_{l+l'\<j\<s-1}V_{(r+r'-1-2i)s}(\chi^j\l\s)),\\
\end{array}$$
and if $l\>l'$ then
$$\begin{array}{rl}
&V_n(\l)\ot V_t(\s)\cong V_t(\s)\ot V_n(\l)\\
\cong&(\oplus_{i=0}^{r}\oplus_{0\<j\<l'-1}V_{n+t-1-2is-2j}(\chi^j\l\s))
\oplus(\oplus_{i=0}^{r}\oplus_{l'\<j\<l-1}V_{(r+r'-2i)s}(\chi^j\l\s))\\
&\oplus(\oplus_{0\<i\<r-1}\oplus_{l\<j\<l+l'-1}V_{n+t-1-2is-2j}(\chi^j\l\s))
\oplus(\oplus_{0\<i\<r-1}\oplus_{l+l'\<j\<s-1}V_{(r+r'-1-2i)s}(\chi^j\l\s)).\\
\end{array}$$
{\rm (2)} Suppose that $l+l'\>s+1$ and let $m=l+l'-s-1$. If $l\<l'$ then
$$\begin{array}{rl}
&V_n(\l)\ot V_t(\s)\cong V_t(\s)\ot V_n(\l)\\
\cong&(\oplus_{i=0}^{r}\oplus_{j=0}^mV_{(r+r'+1-2i)s}(\chi^j\l\s))
\oplus(\oplus_{i=0}^{r}\oplus_{j=m+1}^{l-1}V_{n+t-1-2is-2j}(\chi^j\l\s))\\
&\oplus(\oplus_{0\<i\<r-1}\oplus_{l\<j\<l'-1}V_{(r+r'-2i)s}(\chi^j\l\s))
\oplus(\oplus_{0\<i\<r-1}\oplus_{j=l'}^{s-1}V_{n+t-1-2is-2j}(\chi^j\l\s)),\\
\end{array}$$
and if $l\>l'$ then
$$\begin{array}{rl}
&V_n(\l)\ot V_t(\s)\cong V_t(\s)\ot V_n(\l)\\
\cong&(\oplus_{i=0}^{r}\oplus_{j=0}^{m}V_{(r+r^\p+1-2i)s}(\chi^j\l\s))
\oplus(\oplus_{i=0}^{r}\oplus_{j=m+1}^{l'-1}V_{n+t-1-2is-2j}(\chi^j\l\s))\\
&\oplus(\oplus_{i=0}^{r}\oplus_{l'\<j\<l-1}V_{(r+r'-2i)s}(\chi^j\l\s))
\oplus(\oplus_{0\<i\<r-1}\oplus_{j=l}^{s-1}V_{n+t-1-2is-2j}(\chi^j\l\s)).\\
\end{array}$$
\end{theorem}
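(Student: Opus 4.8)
The plan is to prove the two-sided isomorphism by induction on $t$, the smaller of the two indices, treating all pairs of characters $\l,\s$ simultaneously. The base cases are already available: when $t\<s$ the factor $V_t(\s)$ has index at most $s$, so \leref{3.11} applies (after interchanging the factors to place $V_t(\s)$ first); when $s\mid t$, i.e.\ $l=0$, the decomposition is \leref{3.13}; and when $l=1$, i.e.\ $t=rs+1$, it is \leref{3.14} (with \leref{3.13} covering the subcase $s\mid n$). Thus it remains to carry out the inductive step for $t=rs+l$ with $r\>1$ and $2\<l\<s-1$, where $V_{t-1}(\s)\ot V_n(\l)$ and $V_{t-2}(\chi\s)\ot V_n(\l)$ are both known by the induction hypothesis since $t-2<t\<n$.

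The engine of the step is the ``ladder'' identity coming from \leref{3.10}: for any character $\tau$ and any $m\>1$,
\[
V_2(\e)\ot V_m(\tau)\cong
\begin{cases}
V_{m+1}(\tau)\oplus V_{m-1}(\chi\tau), & s\nmid m,\\
V_m(\tau)\oplus V_m(\chi\tau), & s\mid m.
\end{cases}
\]
First I would compute $V_2(\e)\ot V_{t-1}(\s)\ot V_n(\l)$ in two ways. Bracketing the first two factors and using $s\nmid(t-1)$ (valid since $2\<l\<s-1$), the ladder gives $V_2(\e)\ot V_{t-1}(\s)\cong V_t(\s)\oplus V_{t-2}(\chi\s)$, so the triple product equals $\bigl(V_t(\s)\ot V_n(\l)\bigr)\oplus\bigl(V_{t-2}(\chi\s)\ot V_n(\l)\bigr)$. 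Bracketing the last two factors instead, the induction hypothesis expands $V_{t-1}(\s)\ot V_n(\l)$ as an explicit sum of modules $V_d(\tau)$, and applying $V_2(\e)\ot(-)$ termwise via the ladder expands the triple product a second time. Comparing the two expansions by the Krull--Schmidt Theorem, I cancel the known summand $V_{t-2}(\chi\s)\ot V_n(\l)$ and solve for $V_t(\s)\ot V_n(\l)$. Because \leref{3.10} supplies the ladder identity on either side of the tensor factor, the identical computation with the factors in the opposite order yields $V_n(\l)\ot V_t(\s)$, so both orders are obtained at once.

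The delicate point, and the source of the four-fold case split, is the bookkeeping in the termwise ladder step. The summands $V_d(\tau)$ occurring in $V_{t-1}(\s)\ot V_n(\l)$ come in two shapes: the ``full'' summands with $d\equiv0\pmod s$ (the terms $V_{(r+r'-2i)s}$ and $V_{(r+r'-1-2i)s}$), which split as $V_d(\tau)\oplus V_d(\chi\tau)$, and the ``mixed'' summands $V_{d}$ with $d=n+t-1-2is-2j\equiv l+l'-1-2j\pmod s$, which split as $V_{d+1}(\tau)\oplus V_{d-1}(\chi\tau)$ except exactly when $2j\equiv l+l'-1\pmod s$, where $d$ becomes divisible by $s$ and the other splitting rule takes over. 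Locating the values of $j$ at which this transition occurs is precisely what forces the hypotheses $l\<l'$ versus $l\>l'$ and $l+l'\<s$ versus $l+l'\>s+1$, and it fixes the index ranges of the four families in the statement. I expect this term-matching to be the main obstacle: one must track the boundary contributions at $i=0$, $i=r$, $j=0$ and $j=s-1$ with care, invoke the paper's convention that $\oplus_{l\<i\<m}$ is empty when $l>m$ to absorb degenerate ranges, and confirm each Krull--Schmidt cancellation by checking that both sides carry total dimension $nt$.
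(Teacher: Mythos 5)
Your proposal follows essentially the same route as the paper's proof: the base cases are supplied by \leref{3.11}, \leref{3.13} and \leref{3.14}, and the inductive step expands a triple tensor product with $V_2(\e)$ in two ways via the ladder identity of \leref{3.10} and then cancels by the Krull--Schmidt Theorem. The only difference is organizational --- you induct on $t$ and always reduce the $V_t(\s)$ factor, whereas the paper inducts on $\min\{l,l'\}$ in part (1) and on $m$ in part (2) and reduces whichever tensor factor realizes that minimum --- which costs you some extra bookkeeping at the boundaries (e.g.\ $l=l'+1$ or $l+l'=s+1$, where the case governing $V_{t-2}(\chi\s)\ot V_n(\l)$ switches), but the deferred term-matching is exactly the computation the paper itself carries out only for representative subcases.
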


\begin{proof}
Note that $r\<r'$ and that if $l>l'$ then $r<r'$ since $rs+l\<r's+l'$. We only consider $V_n(\l)\ot V_t(\s)$
since the proof is similar for $V_t(\s)\ot V_n(\l)$.

{\rm (1)} Assume $l+l'\<s$. Let $l'':={\rm min}\{l,l'\}$.
We work by induction on $l''$.
For $l''=0$ and $l''=1$, it follows from \leref{3.13} and \leref{3.14}, respectively.
Now let $l''\>2$. Then $2\<l''=l\<l'\<s-1$ or $2\<l''=l'\<l\<s-1$.
If $2\<l\<l'\<s-1$, then $0\<l-2<l-1<l\<l'\<s-1$. In this case,
by \leref{3.10} and the induction hypothesis, we have
$$\begin{array}{rl}
&V_{n}(\l)\ot V_{t-1}(\s)\ot V_{2}(\e)\\
\cong&((\oplus_{i=0}^{r}\oplus_{j=0}^{l-2}V_{n+t-2-2is-2j}(\chi^j\l\s))
\oplus(\oplus_{0\<i\<r-1}\oplus_{j=l'}^{l+l'-2}V_{n+t-2-2is-2j}(\chi^j\l\s))\\
&\oplus(\oplus_{0\<i\<r-1}\oplus_{j=l-1}^{l'-1}V_{(r+r'-2i)s}(\chi^j\l\s))
\oplus(\oplus_{0\<i\<r-1}\oplus_{j=l+l'-1}^{s-1}V_{(r+r'-1-2i)s}(\chi^j\l\s))\ot V_{2}(\e)\\
\cong&(\oplus_{i=0}^{r}\oplus_{j=0}^{l-2}V_{n+t-1-2is-2j}(\chi^j\l\s))
\oplus(\oplus_{i=0}^{r}\oplus_{j=0}^{l-2}V_{n+t-3-2is-2j}(\chi^{j+1}\l\s))\\
&\oplus(\oplus_{0\<i\<r-1}\oplus_{j=l-1}^{l'-1}V_{(r+r'-2i)s}(\chi^j\l\s))
\oplus(\oplus_{0\<i\<r-1}\oplus_{j=l-1}^{l'-1}V_{(r+r'-2i)s}(\chi^{j+1}\l\s))\\
&\oplus(\oplus_{0\<i\<r-1}\oplus_{j=l'}^{l+l'-2}V_{n+t-1-2is-2j}(\chi^j\l\s))
\oplus(\oplus_{0\<i\<r-1}\oplus_{j=l'}^{l+l'-2}V_{n+t-3-2is-2j}(\chi^{j+1}\l\s))\\
&\oplus(\oplus_{0\<i\<r-1}\oplus_{j=l+l'-1}^{s-1}V_{(r+r'-1-2i)s}(\chi^j\l\s))
\oplus(\oplus_{0\<i\<r-1}\oplus_{j=l+l'-1}^{s-1}V_{(r+r'-1-2i)s}(\chi^{j+1}\l\s))\\
\end{array}$$
and
$$\begin{array}{rl}
&V_{n}(\l)\ot V_{t-1}(\s)\ot V_{2}(\e)\\
\cong&V_{n}(\l)\ot V_{t}(\s)\oplus V_{n}(\l)\ot V_{t-2}(\chi\s)\\
\cong&V_{n}(\l)\ot V_{t}(\s)\oplus
(\oplus_{i=0}^{r}\oplus_{0\<j\<l-3}V_{n+t-3-2is-2j}(\chi^{j+1}\l\s))\\
&\oplus(\oplus_{0\<i\<r-1}\oplus_{j=l-2}^{l'-1}V_{(r+r'-2i)s}(\chi^{j+1}\l\s))
\oplus(\oplus_{0\<i\<r-1}\oplus_{l'\<j\<l+l'-3}V_{n+t-3-2is-2j}(\chi^{j+1}\l\s))\\
&\oplus(\oplus_{0\<i\<r-1}\oplus_{j=l+l'-2}^{s-1}V_{(r+r'-1-2i)s}(\chi^{j+1}\l\s)).\\
\end{array}$$
Then it follows from Krull-Schmidt Theorem that
$$\begin{array}{rl}
&V_{n}(\l)\ot V_{t}(\s)\\
\cong&(\oplus_{i=0}^{r}\oplus_{j=0}^{l-1}V_{n+t-1-2is-2j}(\chi^j\l\s))
\oplus(\oplus_{0\<i\<r-1}\oplus_{l\<j\<l'-1}V_{(r+r'-2i)s}(\chi^j\l\s))\\
&\oplus(\oplus_{0\<i\<r-1}\oplus_{j=l'}^{l+l'-1}V_{n+t-1-2is-2j}(\chi^j\l\s))
\oplus(\oplus_{0\<i\<r-1}\oplus_{l+l'\<j\<s-1}V_{(r+r'-1-2i)s}(\chi^j\l\s)).\\
\end{array}$$
If $2\<l'\<l\<s-1$, then $0\<l'-2<l'-1<l'\<l\<s-1$.
In this case, by \leref{3.10} and the induction hypothesis, we have
$$\begin{array}{rl}
&V_2(\e)\ot V_{n-1}(\l)\ot V_{t}(\s)\\
\cong& V_{n}(\l)\ot V_{t}(\s)\oplus V_{n-2}(\chi\l)\ot V_{t}(\s)\\
\cong&V_{n}(\l)\ot V_{t}(\s)\oplus(\oplus_{i=0}^{r}\oplus_{0\<j\<l'-3}V_{n+t-3-2is-2j}(\chi^{j+1}\l\s))\\
&\oplus(\oplus_{i=0}^{r}\oplus_{j=l'-2}^{l-1}V_{(r+r^\p-2i)s}(\chi^{j+1}\l\s))
\oplus(\oplus_{0\<i\<r-1}\oplus_{l\<j\<l+l'-3}V_{n+t-3-2is-2j}(\chi^{j+1}\l\s))\\
&\oplus(\oplus_{0\<i\<r-1}\oplus_{j=l+l'-2}^{s-1}V_{(r+r'-1-2i)s}(\chi^{j+1}\l\s))\\
\end{array}$$
and
$$\begin{array}{rl}
&V_2(\e)\ot V_{n-1}(\l)\ot V_{t}(\s)\\
\cong&V_2(\e)\ot((\oplus_{i=0}^{r}\oplus_{j=0}^{l'-2}V_{n+t-2-2is-2j}(\chi^j\l\s))
\oplus(\oplus_{i=0}^{r}\oplus_{j=l'-1}^{l-1}V_{(r+r'-2i)s}(\chi^j\l\s))\\
&\oplus(\oplus_{0\<i\<r-1}\oplus_{j=l}^{l+l'-2}V_{n+t-2-2is-2j}(\chi^j\l\s))
\oplus (\oplus_{0\<i\<r-1}\oplus_{j=l+l'-1}^{s-1}V_{(r+r'-1-2i)s}(\chi^j\l\s)))\\
\cong&(\oplus_{i=0}^{r}\oplus_{j=0}^{l'-2}V_{n+t-1-2is-2j}(\chi^j\l\s))
\oplus(\oplus_{i=0}^{r}\oplus_{j=0}^{l'-2}V_{(n+t-3-2is-2j}(\chi^{j+1}\l\s))\\
&\oplus(\oplus_{i=0}^{r}\oplus_{j=l'-1}^{l-1}V_{(r+r'-2i)s}(\chi^j\l\s))
\oplus(\oplus_{i=0}^{r}\oplus_{j=l'-1}^{l-1}V_{(r+r'-2i)s}(\chi^{j+1}\l\s))\\
&\oplus(\oplus_{0\<i\<r-1}\oplus_{j=l}^{l+l'-2}V_{n+t-1-2is-2j}(\chi^j\l\s))
\oplus(\oplus_{0\<i\<r-1}\oplus_{j=l}^{l+l'-2}V_{n+t-3-2is-2j}(\chi^{j+1}\l\s))\\
&\oplus(\oplus_{0\<i\<r-1}\oplus_{j=l+l'-1}^{s-1}V_{(r+r'-1-2i)s}(\chi^j\l\s))
\oplus(\oplus_{0\<i\<r-1}\oplus_{j=l+l'-1}^{s-1}V_{(r+r'-1-2i)s}(\chi^{j+1}\l\s)).\\
\end{array}$$
Then it follows from Krull-Schmidt Theorem that
$$\begin{array}{rl}
&V_{n}(\l)\ot V_{t}(\s)\\
\cong&(\oplus_{i=0}^{r}\oplus_{j=0}^{l'-1}V_{n+t-1-2is-2j}(\chi^j\l\s))
\oplus(\oplus_{i=0}^{r}\oplus_{l'\<j\<l-1}V_{(r+r'-2i)s}(\chi^j\l\s))\\
&\oplus(\oplus_{0\<i\<r-1}\oplus_{j=l}^{l+l'-1}V_{n+t-1-2is-2j}(\chi^j\l\s))
\oplus(\oplus_{0\<i\<r-1}\oplus_{l+l'\<j\<s-1}V_{(r+r'-1-2i)s}(\chi^j\l\s)).\\
\end{array}$$

(2) Assume that $l+l'\>s+1$ and let $m=l+l'-s-1$. We work by the induction on $m$.
We only consider the case of $m=0$ since the proofs are similar for $m=1$ and the induction step ($m\>2$).
Let $m=0$. Then $l+l'=s+1$, and hence $2\<l\<l'\<s-1$ or $2\<l'\<l\<s-1$.
If $2\<l\<l'\<s-1$, then by (1) and \leref{3.10}, we have
$$\begin{array}{rl}
&V_{n}(\l)\ot V_{t-1}(\s)\ot V_2(\e)\\
\cong&((\oplus_{i=0}^{r}\oplus_{j=0}^{l-2}V_{n+t-2-2is-2j}(\chi^j\l\s))
\oplus(\oplus_{0\<i\<r-1}\oplus_{j=l-1}^{l'-1}V_{(r+r'-2i)s}(\chi^j\l\s))\\
&\oplus(\oplus_{0\<i\<r-1}\oplus_{j=l'}^{l+l'-2}V_{n+t-2-2is-2j}(\chi^j\l\s)))\ot V_2(\e)\\
\cong&(\oplus_{i=0}^{r}\oplus_{j=0}^{l-2}V_{n+t-1-2is-2j}(\chi^j\l\s))
\oplus(\oplus_{i=0}^{r}\oplus_{j=0}^{l-2}V_{n+t-3-2is-2j}(\chi^{j+1}\l\s))\\
&\oplus(\oplus_{0\<i\<r-1}\oplus_{j=l-1}^{l'-1}V_{(r+r'-2i)s}(\chi^j\l\s))
\oplus(\oplus_{0\<i\<r-1}\oplus_{j=l-1}^{l'-1}V_{(r+r'-2i)s}(\chi^{j+1}\l\s))\\
&\oplus(\oplus_{0\<i\<r-1}\oplus_{j=l'}^{s-1}V_{n+t-1-2is-2j}(\chi^j\l\s))
\oplus(\oplus_{0\<i\<r-1}\oplus_{j=l'}^{s-1}V_{n+t-3-2is-2j}(\chi^{j+1}\l\s))\\
\end{array}$$
and
$$\begin{array}{rl}
&V_{n}(\l)\ot V_{t-1}(\s)\ot V_{2}(\e)\\
\cong&V_{n}(\l)\ot V_{t}(\s)\oplus V_{n}(\l)\ot V_{t-2}(\chi\s)\\
\cong&V_{n}(\l)\ot V_{t}(\s)\oplus(\oplus_{i=0}^{r}
\oplus_{0\<j\<l-3}V_{n+t-3-2is-2j}(\chi^{j+1}\l\s))\\
&\oplus(\oplus_{0\<i\<r-1}\oplus_{j=l-2}^{l'-1}V_{(r+r'-2i)s}(\chi^{j+1}\l\s))
\oplus(\oplus_{0\<i\<r-1}\oplus_{l'\<j\<s-2}V_{n+t-3-2is-2j}(\chi^{j+1}\l\s))\\
&\oplus(\oplus_{0\<i\<r-1}V_{(r+r^\p-1-2i)s}(\l\s)).
\end{array}$$
Therefore, it follows from Krull-Schmidt Theorem that
$$\begin{array}{rl}
&V_{n}(\l)\ot V_{t}(\s)\\
\cong&(\oplus_{i=0}^{r}\oplus_{j=0}^{l-1}V_{n+t-1-2is-2j}(\chi^j\l\s))
\oplus(\oplus_{0\<i\<r-1}\oplus_{l\<j\<l'-1}V_{(r+r'-2i)s}(\chi^j\l\s))\\
&\oplus(\oplus_{0\<i\<r-1}\oplus_{j=l'}^{s-1}V_{n+t-1-2is-2j}(\chi^j\l\s))\\
\cong&(\oplus_{i=0}^{r}V_{(r+r'+1-2i)s}(\l\s))
\oplus(\oplus_{i=0}^{r}\oplus_{j=1}^{l-1}V_{n+t-1-2is-2j}(\chi^j\l\s))\\
&\oplus(\oplus_{0\<i\<r-1}\oplus_{l\<j\<l'-1}V_{(r+r'-2i)s}(\chi^j\l\s))
\oplus(\oplus_{0\<i\<r-1}\oplus_{j=l'}^{s-1}V_{n+t-1-2is-2j}(\chi^j\l\s)).
\end{array}$$
Similarly, if $2\<l'\<l\<s-1$, then we have
$$\begin{array}{rl}
&V_2(\e)\ot V_{n-1}(\l)\ot V_{t}(\s)\\
\cong&V_2(\e)\ot((\oplus_{i=0}^{r}\oplus_{j=0}^{l'-2}V_{n+t-2-2is-2j}(\chi^j\l\s))
\oplus(\oplus_{i=0}^{r}\oplus_{j=l'-1}^{l-1}V_{(r+r'-2i)s}(\chi^j\l\s))\\
&\oplus(\oplus_{0\<i\<r-1}\oplus_{j=l}^{s-1}V_{n+t-2-2is-2j}(\chi^j\l\s)))\\
\cong&(\oplus_{i=0}^{r}\oplus_{j=0}^{l'-2}V_{n+t-1-2is-2j}(\chi^j\l\s))
\oplus(\oplus_{i=0}^{r}\oplus_{j=0}^{l'-2}V_{n+t-3-2is-2j}(\chi^{j+1}\l\s))\\
&\oplus(\oplus_{i=0}^{r}\oplus_{j=l'-1}^{l-1}V_{(r+r'-2i)s}(\chi^j\l\s))
\oplus(\oplus_{i=0}^{r}\oplus_{j=l'-1}^{l-1}V_{(r+r'-2i)s}(\chi^{j+1}\l\s))\\
&\oplus(\oplus_{0\<i\<r-1}\oplus_{j=l}^{s-1}V_{n+t-1-2is-2j}(\chi^j\l\s))
\oplus(\oplus_{0\<i\<r-1}\oplus_{j=l}^{s-1}V_{n+t-3-2is-2j}(\chi^{j+1}\l\s))\\
\end{array}$$
and
$$\begin{array}{rl}
&V_2(\e)\ot V_{n-1}(\l)\ot V_{t}(\s)\\
\cong&V_{n}(\l)\ot V_{t}(\s)\oplus V_{n-2}(\chi\l)\ot V_{t}(\s)\\
\cong&V_{n}(\l)\ot V_{t}(\s)
\oplus(\oplus_{i=0}^{r}\oplus_{0\<j\<l'-3}V_{n+t-3-2is-2j}(\chi^{j+1}\l\s))\\
&\oplus(\oplus_{i=0}^{r}\oplus_{j=l'-2}^{l-1}V_{(r+r'-2i)s}(\chi^{j+1}\l\s))
\oplus(\oplus_{0\<i\<r-1}\oplus_{l\<j\<s-2}V_{n+t-3-2is-2j}(\chi^{j+1}\l\s))\\
&\oplus(\oplus_{0\<i\<r-1}V_{(r+r'-1-2i)s}(\l\s)).\\
\end{array}$$
Thus, it follows from Krull-Schmidt Theorem that
$$\begin{array}{rl}
&V_{n}(\l)\ot V_{t}(\s)\\
\cong&(\oplus_{i=0}^{r}\oplus_{j=0}^{l'-1}V_{n+t-1-2is-2j}(\chi^j\l\s))
\oplus(\oplus_{i=0}^{r}\oplus_{l'\<j\<l-1}V_{(r+r'-2i)s}(\chi^j\l\s))\\
&\oplus(\oplus_{0\<i\<r-1}\oplus_{j=l}^{s-1}V_{n+t-1-2is-2j}(\chi^j\l\s))\\
\cong&(\oplus_{i=0}^{r}V_{(r+r'+1-2i)s}(\l\s))
\oplus(\oplus_{i=0}^{r}\oplus_{j=1}^{l'-1}V_{n+t-1-2is-2j}(\chi^j\l\s))\\
&\oplus(\oplus_{i=0}^{r}\oplus_{l'\<j\<l-1}V_{(r+r'-2i)s}(\chi^j\l\s))
\oplus(\oplus_{0\<i\<r-1}\oplus_{j=l}^{s-1}V_{n+t-1-2is-2j}(\chi^j\l\s)).\\
\end{array}$$
\end{proof}

\begin{remark}
When $|\chi|\neq|\chi(a)|$, the tensor product decomposition rules for the
finite dimensional weight modules over $H$ also can be determined using the idea
and method similar to the case $|\chi|=|\chi(a)|<\infty$, but the decomposition formulae and computations
will be more complicated.
\end{remark}

\begin{remark}
Let $K$ be any Hopf algebra. Assume that $K(\chi^{-1}, a, 0)$ is a Hopf-Ore extension of $K$,
where $a$ is a central group-like element in $K$, $\chi: K\ra k$ is an algebra map which is central
in the dual algebra $K^*$. Assume that $\chi(a)\neq 1$.
Then similarly to \cite{WangYouChen}, one can classify all finite dimensional
$K(\chi^{-1}, a, 0)$-modules on which $K$ acts diagonally. Obviously, the category of all such
$K(\chi^{-1}, a, 0)$-modules form a monoidal category $\mathcal C$. Then one can similarly
get the decomposition rules for the tensor products of indecomposable modules in $\mathcal C$
under the assumption that $k$ is an algebraically closed field of characteristic zero and $|\chi|=|\chi(a)|$.
\end{remark}

{\bf Acknowledgments}
This work is supported by National Natural Science Foundation of China (Grant No. 11571298).


\begin{thebibliography}{99}
\bibitem{AndSch98}
N. Andruskiewitsch, H. -J. Schneider, Lifting of quantum linear
spaces and pointed Hopf algebras of order $p^3$,
J. Algebra 209 (1998), 658-691.
\bibitem{AndSch02}
N. Andruskiewitsch, H. -J. Schneider, Pointed Hopf algebras, New
directions in Hopf algebras. Math. Sci. Res. Inst. Publ. 43, 1-68,
Cambridge: Cambridge Univ. Press, 2002.
\bibitem{AndSch10}
N. Andruskiewitsch, H. -J. Schneider,
On the classification of finite-dimensional pointed Hopf algebras, Ann. of Math. 171
(2010), 375-417.
\bibitem{Archer}
L. Archer, On certain quotients of the Green rings of dihedral 2-groups,
J. Pure \& Appl. Algebra 212 (2008), 1888-1897.
\bibitem{ARS}
M. Auslander, I. Reiten and S. O. Smal$\phi$,
Representation Theory of Artin Algebras,
Cambridge Univ. Press, Cambridge, 1995.
\bibitem{BeaDasGrun}
M. Beattie, S. D$\check{a}$sc$\check{a}$lescu and Gr$\ddot{u}$nenfelder,
Constructing pointed Hopf algebras by Ore extensions,
J. Algebra 225 (2000), 743-770.
\bibitem{BenCar}
D. J. Benson and J. F. Carlson, Nilpotent elements in the Green ring,
J. Algebra 104 (1986), 329-350.
\bibitem{BenPar}
D. J. Benson and R. A. Parker, The Green ring of a finite group, J. Algebra 87 (1984), 290-331.
\bibitem{BrJoh}
R. M. Bryant and M. Johnson, Periodicity of Adams operations on the Green
ring of a finite group, arXiv:0912.2933v1[math.RT].
\bibitem{ChVOZh}
H. X. Chen, F. Van Oystaeyen and Y. H. Zhang, The Green rings of Taft algebras,
Proc. Amer. Math. Soc. 142 (2014), 765-775.
\bibitem{Chen2014}
H. X. Chen, The Green ring of Drinfeld double $D(H_4)$, Algebr. Represent. Theor.
17 (2014), 1457-1483.
\bibitem{Cib}
C. Cibils, A quiver quantum group, Comm. Math. Phys. 157 (1993), 459-477.
\bibitem{Green}
J. A. Green, The modular representation algebra of a finite group, Ill. J.
Math. 6(4) (1962), 607-619.
\bibitem{Gunn}
E. Gunnlaugsd${\rm{\acute{o}}}$ttir, Monoidal structure of the category of $\mathfrak{u}^+_q$-modules,
Linear Algebra and its Applications 365 (2003), 183-199.
\bibitem{HTW}
I. Hambleton, L. R. Taylor and E.B. Williams, Dress induction and Burnside
quotient Green ring, Algebra Number Theory 3 (2009), 511-541.
\bibitem{Kas}
C. Kassel, Quantum Groups, Graduate Texts in Mathematics
155, Springer-Verlag, New York, 1995.
\bibitem{KonSa2011}
H. Kondo and Y. Saito, Indecomposable decomposition of tensor products of modules over the restricted
quantum universal enveloping algebra associated to $\mathfrak{sl}_2$,  J. Algebra 330 (2011), 103-129.
\bibitem{KR}
L. Krop, D. Radford, Finite-dimensional Hopf algebras of rank one in characteristic zero,
J. Algebra 302 (2006), 214-230.
\bibitem{LiHu}
Y. Li and N. Hu, The Green rings of the 2-rank Taft algebra and its two relatives twisted,
J. Algebra 410 (2014), 1-35.
\bibitem{LiZhang}
L. B. Li and Y. H. Zhang, The Green rings of the generalized Taft Hopf algebras,
Contemporary Mathematics 585 (2013), 275-288.
\bibitem{Mo}
S. Montgomery, Hopf Algebras and Their Actions on Rings, CBMS Reg. Conf. Ser. Math.
82, Amer. Math. Soc., Providence, RI, 1993.
\bibitem{Pa}
A. N. Panov, Ore extensions of Hopf algebras, Math. Notes 74 (2003), 401-410.
\bibitem{Sc}
S. Scherotzke, Classification of pointed rank one Hopf algebras, J. Algebra 319
(2008), 2889-2912.
\bibitem{WangLiZhang2014}
Z. Wang, L. Li and Y. Zhang, Green rings of pointed rank one Hopf algebras of nilpotent type,
Algebr. Represent. Theory 17 (2014) 1901¨C1924.
\bibitem{WangLiZhang2016}
Z. Wang, L. Li and Y. Zhang, Green rings of pointed rank one Hopf algebras of non-nilpotent type,
J. Algebra 449 (2016), 108-137.
\bibitem{WangYouChen}
Z. Wang, L. You and H. X. Chen, Representations of Hopf-Ore Extensions of Group
Algebras and Pointed Hopf Algebras of Rank One, Algebr. Represent. Theor. 18 (2015), 801-830.
\bibitem{With}
S. J. Witherspoon, The representation ring of the quantum double of a finite group, J. Algebra 179 (1996), 305-329.
\end{thebibliography}
\end{document}